\numberwithin{equation}{section}
\numberwithin{equation}{section} \theoremstyle{plain}
\newtheorem{theorem}{Theorem}[section]%
\newtheorem{lemma}[theorem]{Lemma}%
\newtheorem{remark}[theorem]{Remark}%
\newcommand{\R}{{\mathbb R}}
\newcommand{\ds}{\displaystyle}
\newcommand{\vs}{\vspace}
\newcommand{\h}{\hspace}
\begin{document}
	\title
	[Existence of nontrivial solutions for critical biharmonic equations]
	{Existence of nontrivial solutions for critical biharmonic equations with logarithmic term }
	\maketitle
 \begin{spacing}{1.25}
	\begin{center}
		\author{Qihan He$^1$}\\
		 College  of  Mathematics and Information Sciences, Guangxi Center for Mathematical Research, Guangxi University, Nanning, 530003,  P. R. China
		\footnote{ Email: heqihan277@gxu.edu.cn.}
		
		\author{Juntao Lv$^2$ }\\
		School of Mathematics and Statistics, Wuhan University, Wuhan, 430072, P. R.  China,
		\footnote{Email: m18131375275@163.com.}
		
		\author{Zongyan Lv$^3$}\\
		School of Mathematical Sciences, Beijing Normal University, Beijing, 100875, P. R. China,
		\footnote{ Email: zongyanlv0535@163.com.}
		
		\author{Tong Wu$^{4^*}$}\\
		College  of  Mathematics and Information Sciences, Guangxi Center for Mathematical Research, Guangxi University, Nanning, 530003,  P. R. China,
		\footnote{ Email: tongwu97@126.com.\\
		{\bf     MSC 2020:} 35A01, 35A15, 35B33, 35D30, 35G30. }
		
	\end{center}
\end{spacing}


	
	\begin{abstract}
		In this paper, we consider the existence of nontrivial solutions to the following critical biharmonic problem with a logarithmic term
		\begin{equation*}
			\left\{ \ds\begin{array}{ll} \ds \Delta ^2 u=\mu \Delta u+\lambda u+|u|^{2^{**}-2}u+\tau u\log u^2, \ \ x\in\Omega,  \vs{.4cm}\\
				u|_{\partial \Omega }=\frac{\partial u}{\partial n}|_{\partial\Omega}=0,
			\end{array}
			\right.\h{1cm}
		\end{equation*}
		where $\mu,\lambda,\tau  \in \mathbb{R}$, $|\mu|+|\tau|\ne 0$, $\Delta ^2=\Delta \Delta $ denotes the iterated N-dimensional Laplacian, $\Omega \subset \mathbb{R}^{N}$ is a bounded domain with smooth boundary $\partial \Omega $, $2^{**}=\frac{2N}{N-4}(N\ge5)$ is the critical Sobolev exponent for the embedding $H_{0}^{2}(\Omega)\hookrightarrow L^{2^{**}}(\Omega)$ and  $H_0^2 (\Omega )$ is the closure of $C_0^ \infty (\Omega )$ under the norm $|| u ||:=(\ds \int_{\Omega}|\Delta u|^2)^\frac{1}{2}$. The uncertainty
of the sign of $s\log s^2$ in $(0,+\infty)$ has some interest in itself. To know which of the three terms $\mu \Delta u$, $\lambda u$ and $\tau u \log u^2$ has a greater influence on the existence of nontrivial weak solutions, we prove the existence of nontrivial weak solutions to the above problem for $N\ge5$ under some assumptions of $\lambda, \mu$ and $\tau$.
	\end{abstract}
	

	
	

	\section{ Introduction}

	We study the existence of nontrivial weak solutions for the following critical biharmonic problem with a logarithmic term:
	\begin{equation}\label{eqS1.1}
		\left\{ \ds\begin{array}{ll} \ds \Delta ^2 u=\mu \Delta u+\lambda u+|u|^{2^{**}-2}u+\tau u\log u^2, \ \ x\in\Omega,  \vs{.4cm}\\
			u|_{\partial \Omega }=\frac{\partial u}{\partial n}|_{\partial\Omega}=0,
		\end{array}
		\right.\h{1cm}
	\end{equation}
	\baselineskip=19pt
	under some assumptions of $\lambda, \mu$ and $\tau$, where $\Omega$ is a bounded smooth domain in $\R^N$, $\Delta ^2=\Delta \Delta $ is the square of the Laplacian which is called the biharmonic operator, $2^{**}=\frac{2N}{N-4}(N \ge 5)$ is the  critical exponent of the Sobolev embedding $H_{0}^{2}(\Omega)\hookrightarrow L^{2^{**}}(\Omega)$.

	As we all kown, $Pohoz\check{a}ev$ \cite{PSI} asserts that the following equation
	\begin{equation*}
		\left\{ \ds\begin{array}{ll} \ds -\Delta u=|u|^{2^*-2}u, \quad   &x \in\Omega,  \vs{.4cm}\\
			u=0,\quad &x\in \partial \Omega ,
		\end{array}
		\right.\h{1cm}
	\end{equation*}
	has no nontrivial solutions when $\Omega$ is star-shaped. But,
	as $Br\acute{e}zis$ and $Nirenberg$ have shown in \cite{BN}, a lower-order terms can reverse this circumstance. Indeed, they considered the following classical problem
	\begin{equation}\label{eqS1.2}
		\left\{ \ds
		\begin{array}{ll} \ds -\Delta u=|u|^{2^*-2}u+\lambda u, \quad   &x \in\Omega, \vs{.4cm}\\
			u=0,\quad &x\in \partial \Omega ,
		\end{array}
		\right.\h{1cm}
	\end{equation}
	\baselineskip=19pt
	and found out that the conditions for existence of positive solutions are different when $N=3$ and $N\ge4$.
	They showed that: (1) when $N\ge4$ and $\lambda\in(0,\lambda_1(\Omega))$, there  exists a positive solution for $\eqref{eqS1.2}$;
	(2) when $N=3$ and $\Omega$ is a ball, $\eqref{eqS1.2}$ has a positive solution if and only if $\lambda\in \left(\frac{1}{4}\lambda_1(\Omega),\lambda_1(\Omega)\right)$;
	(3) $\eqref{eqS1.2}$ has no solutions when $\lambda\le0$ and $\Omega$ is star-shaped,
	where $\lambda_1(\Omega)$ is the first eigenvalue of $-\Delta$  with zero Dirichlet boundary condition on $\Omega$.	After that, the second and higher-order critical problems have received wide attention and achieved abundant results(see \cite{BN,CSS,NSY,GDW,DY,DL,JC,LZY,LP,PS,YWS,ZH,ZY,HL} and the references therein).

	
	
	$Gu$, $Deng$ and $Wang$ \cite{GDW} studied the existence and non-existence of the nontrivial solutions for the following critical biharmonic equation
	\begin{equation}\label{eqS1.3}
		\left\{ \ds\begin{array}{ll} \ds \Delta^{2}u=\lambda u+|u|^{2^{**}-2}u, \quad   &x \in\Omega,  \vs{.4cm}\\
			u|_{\partial \Omega}=\frac{\partial u}{\partial n}|_{\partial \Omega }=0, \quad  &\lambda>0,
		\end{array}
		\right.\h{1cm}
	\end{equation}
	\baselineskip=19pt
	and showed the following results:
	(1) For $N\geq8$, $\lambda\in(0,\delta_1 (\Omega))$, the problem $\eqref{eqS1.3}$ has at least one nontrivial weak solution; (2) For $N=5, 6, 7$, $\Omega=B_R(0)\subset \R^N$, there exist two positive constants $\lambda^{*}(N)$ and $\lambda^{**}(N)$ with $\lambda^{**}(N)<\lambda^{*}(N)<\delta_1 (\Omega)$, such that the problem $\eqref{eqS1.3}$ has at least one nontrivial weak solution provided $\lambda\in(\lambda^{*}(N),\delta_1 (\Omega))$, and there is no nontrivial solutions to the problem $\eqref{eqS1.3}$ if $\lambda<\lambda^{**}(N)$, where $\delta_1 (\Omega)$ represents the first eigenvalue of $\Delta^2$ with the $Dirichlet$ boundary value condition on $\Omega$.
	
	
	Recently, $He$ and $Lv$ \cite{ HL} have discussed the existence of nontrivial solutions for the following critical biharmonic equation
	\begin{equation}\label{eqS1.4}
		\left\{ \ds\begin{array}{ll} \ds \Delta^{2}u=\mu \Delta u+\lambda u+|u|^{2^{**}-2}u, \quad   &x \in\Omega,  \vs{.4cm}\\
			u|_{\partial \Omega}=\frac{\partial u}{\partial n}|_{\partial \Omega }=0 ,
		\end{array}
		\right.\h{1cm}
	\end{equation}
	\baselineskip=19pt
	and showed that: The problem $\eqref{eqS1.4}$ has at least one nontrivial weak solution if one of the following conditions is true: (i) $N\geq5$, $\mu=0$ and $\lambda\in (\lambda^*(N),\delta_1(\Omega))$; (ii) $N\geq6$, $\mu \in (-\beta(\Omega), 0)$ and $\lambda <\frac{(\mu+\beta(\Omega))\delta_1(\Omega)}{\beta(\Omega)}$; (iii) $N=5$, $(\lambda,\mu)\in A:=\{ (\lambda,\mu)| \lambda\in(-\infty,
	\delta_1(\Omega)), \max\{ -\beta(\Omega), \frac{\beta(\Omega)} { \delta _1 (\Omega)}
	\lambda-\beta(\Omega)\}<\mu \} \bigcap B := \{ (\lambda,\mu)| \mu<0.0317\lambda-11.8681\}$. 
	
	The above results tell us that the lower term compared to the critical term can affect the existence of the solutions. It is obvious that in $\eqref{eqS1.1}$, $\tau u\log {u}^{2}$ is a lower order term at infinity compared to $|u|^{{2}^{\ast\ast }-2}u$. Therefore, motivated by the above mentioned results, we want to consider the influence of the logarithmic term $\tau u\log {u}^{2}$ on the existence of nontrivial weak solutions to the critical biharmonic equation, and study which of the three terms $\mu \Delta u$, $\lambda u$ and $\tau u \log u^2$ has a greater influence on the existence of nontrivial weak solutions to the problem $\eqref{eqS1.1}$.  The uncertainty
	of the sign of $s\log s^2$ in $(0,+\infty)$ makes this problem much more interesting.
	
	We define the variational functional corresponding to the equation $\eqref{eqS1.1}$ as
	\begin{equation*}
		I(u)=\ds \frac{1}{2}\ds \int{(|\Delta u|^2+\mu |\nabla u|^2-\lambda |u|^2)}dx-\frac{1}{2^{**}}\ds \int |u|^{2^{**}}dx-\frac{\tau}{2}\ds \int{u^2(\log {u^2}-1)}dx.
	\end{equation*}
	It is easy to see that the functional $I(u)$ is well-defined in  $H_0^2(\Omega)$.
	
	
	Before stating our results, we set
	$$
	\left\| u \right\|_1^2=\ds \int_\Omega |\Delta u|^2dx,
	$$
	$$
	\left\| u \right\|^{2}=\ds \int_\Omega \left(|\Delta u|^2+\mu |\nabla u|^2-(\lambda-\tau_+) |u|^2\right)dx,
	$$
	and
	$$\beta(\Omega):=\underset{\begin{smallmatrix}
			u\in H_{0}^{2}(\Omega )\setminus\{0\} \\
	\end{smallmatrix}}{\mathop{\inf }}\ \frac{\ds \int_\Omega |\Delta u|^2dx} {\int_{\Omega }{|\nabla u{{|}^{2}}dx}}.$$
	If $u\in H_{0}^{2}(\Omega )\setminus\{0\}$, then $u\in H_{0}^{1}(\Omega )\setminus\{0\}$ is apparent. According to the Poincar\'{e} inequality, we can see that
	$\int_{\Omega }|\nabla u|^2 dx >0$. So $\beta(\Omega)$ is well-defined.
	
	Denote
	$$
	\eta_{\lambda,\tau}:=\frac{\beta(\Omega)}{\delta_1(\Omega)}\max\{\lambda-\tau_+,0\}-\beta(\Omega),
	$$
	where $\delta_1 (\Omega)$ represents the first eigenvalue of $\Delta^2$ with  the $Dirichlet$ boundary value condition on $\Omega$.
	
	We will show that if $\mu>\eta_{\lambda,\tau}$, $\lambda<\delta_1(\Omega)+\tau
	_+$ and $\tau\in\mathbb{R}$, then the norm $\|u\|_1$ is equivalent to $\|u\|$ in $H_{0}^{2}(\Omega )$, in Lemma \ref{lem2.1} below. Hence, for the case of $\mu>\eta_{\lambda,\tau}$, $\lambda<\delta_1(\Omega)$ and $\tau<0$, by the $Sobolev$ embedding theorem and equivalent norm, there is a constant $c_{\lambda,\mu}>0$, depending on $\lambda$, $\mu$ and $\Omega$, such that  $|u|_{2^{**}}\leq c_{\lambda,\mu}\|u\|$. Then we let
	$$
	\tau^*(N):=-\frac{4}{N|\Omega|}c_{\lambda,\mu}^{-\frac{N-4}{4}}.
	$$

	
	
	Our main results can be stated as follows:
	
	\begin{theorem}\label{t1.1}
		There is at least a nontrivial weak solution to the problem $\eqref{eqS1.1}$ provided $\lambda<\delta_1(\Omega)+\tau_+$, and one of the following assumptions holds:
		
		$(i)_1$ $\eta_{\lambda,\tau}<\mu<0$,  $N\ge6$, $\tau>\tau^*(N)$;
		
		$(i)_2$ $\eta_{\lambda,\tau}<\mu<0$,  $N=5$ and $\tau^*(N)<\tau<0$;
		
		$(ii)_1$ $\mu=0$, $N\ge8$ and $\tau>0$;
		
		$(ii)_2$ $\mu=0$, $N=8$, $\tau^*(N)<\tau<0$ and $\frac{\sqrt{c_8}4e^{\frac{\lambda}{2\tau}+\frac{19}{6}}}{\rho_{max}^2}<1$, 		where $\rho_{max} :=sup\{r>0: \exists  x \in \Omega ~~s.t.~ B(x,r)\subset \Omega \}$;
		
		$(ii)_3$  $\mu=0$, $N=5,6,7$ and $\tau^*(N)<\tau<0$;
		
		$(iii)$ $\mu>0$, $N=5$ and  $\tau^*(N)<\tau<0$.
		
	\end{theorem}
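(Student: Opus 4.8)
The plan is to produce the solution as a critical point of $I$ furnished by the Mountain Pass Theorem, working in $H_0^2(\Om)$ with the norm $\|\cdot\|$, which is equivalent to $\|\cdot\|_1$ by Lemma \ref{lem2.1} since in every case $\mu>\eta_{\la,\tau}$ and $\la<\delta_1(\Om)+\tau_+$. First I would verify the geometric hypotheses. For the descent, fixing any $u\ne0$ and letting $t\to+\infty$ makes the critical term $-\frac{t^{2^{**}}}{2^{**}}\int|u|^{2^{**}}$ dominate the quadratic and logarithmic contributions, so $I(tu)\to-\infty$ and a point $e$ with $I(e)<0$ exists. The barrier near the origin is more delicate: since $\tau_+$ is absorbed into $\|u\|^2$, the residual logarithmic contribution is $\frac{|\tau|}{2}\int u^2(\log u^2-1)$ when $\tau<0$. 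Writing $u=\rho v$ with $\|v\|=1$ and expanding, its leading part as $\rho\to0$ is $|\tau|\rho^2(\log\rho)\,|v|_2^2$, which is negative and, because of the $\log\rho$ factor, of larger size than the quadratic barrier $\frac12\rho^2$ for arbitrarily small $\rho$. Hence a uniform barrier $I\ge\al>0$ on a sphere $\|u\|=\rho$ survives only when $|\tau|$ is small enough relative to the best embedding constant, which is what the threshold $\tau>\tau^*(N)$ is designed to guarantee; for $\tau\ge0$ the logarithmic contribution has the right sign and the barrier is automatic.

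\textbf{Compactness.} Next I would show that $I$ satisfies $(PS)_c$ for every $c$ below the critical threshold
$$c^*:=\frac{2}{N}S^{N/4},$$
where $S$ is the best constant of $H_0^2(\R^N)\hookrightarrow L^{2^{**}}(\R^N)$. Boundedness of a $(PS)_c$ sequence $\{u_n\}$ follows from estimating $I(u_n)-\frac{1}{2^{**}}\langle I'(u_n),u_n\rangle$, in which the critical term cancels and the remaining logarithmic terms are of lower order and absorbable into $\|u_n\|^2$. After extracting a weak limit $u$, the Brezis--Lieb lemma together with the concentration-compactness principle for $\Delta^2$ shows that the only obstruction to strong convergence is the escape of a multiple of $S^{N/4}$ of energy into a concentrating bubble; the strict inequality $c<c^*$ rules this out, yielding a nontrivial critical point at level $c\ge\al>0$.

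\textbf{Level estimate.} The decisive step is to certify $\max_{t\ge0}I(tu_\eps)<c^*$ for the truncated extremals $u_\eps(x)=\var(x)U_\eps(x)$, where $U_\eps(x)=c_N\eps^{(N-4)/2}/(\eps^2+|x|^2)^{(N-4)/2}$ are the Talenti bubbles and $\var$ is a cutoff supported in a ball $B(x_0,\rho_{max})\subset\Om$. I would expand, as $\eps\to0$, the integrals $\int|\Delta u_\eps|^2$, $\int|u_\eps|^{2^{**}}$, $\int|\na u_\eps|^2$, $\int u_\eps^2$ and the logarithmic term $\int u_\eps^2\log u_\eps^2$, then maximize in $t$. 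The leading balance of the first two integrals reproduces $c^*$ exactly, and the outcome is governed by the subleading corrections weighed against the truncation error $O(\eps^{N-4})$. The dimensional split is dictated by these orders: $\int|\na u_\eps|^2$ is of order $\eps^2$ for $N>6$ (log-critical at $N=6$, of order $\eps$ at $N=5$), so a negative $\mu$ lowers the level below $c^*$ precisely when $\eps^2$ beats $\eps^{N-4}$, i.e. $N\ge6$ — this is case $(i)_1$, while $N=5$ is borderline and handled separately in $(i)_2$; likewise $\int u_\eps^2$ is of order $\eps^4$ for $N>8$, $\eps^4\log(1/\eps)$ at $N=8$, and $\eps^{N-4}$ for $5\le N<8$, which underlies the conditions $N\ge8$, $N=8$ and $N=5,6,7$ appearing for $\mu=0$.

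\textbf{Main obstacle.} The crux of the argument is the logarithmic integral $\int u_\eps^2\log u_\eps^2$. Because $u_\eps^2$ peaks at order $\eps^{-(N-4)}$, the logarithm supplies an extra factor $\log(1/\eps)$ relative to $\int u_\eps^2$, and since $s\log s^2$ is sign-indefinite this correction may either help or oppose the strict inequality $\max_t I(tu_\eps)<c^*$ according to the sign of $\tau$. Tracking its exact constant is what forces the quantitative hypotheses: the bound $\tau>\tau^*(N)$ limits how much a negative $\tau$ can raise the level (and, as noted, secures the barrier), while in case $(ii)_2$ the $N=8$ resonance $\int u_\eps^2\sim\eps^4\log(1/\eps)$ collides with the logarithmic nonlinearity to leave an $O(\eps^4)$ competition whose sign is favorable exactly when $\frac{\sqrt{c_8}4e^{\frac{\la}{2\tau}+\frac{19}{6}}}{\rho_{max}^2}<1$. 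Each of the six cases then reduces to checking that, in the stated parameter range, the net subleading correction to $c^*$ is strictly negative, and I expect this asymptotic bookkeeping of the logarithmic term, rather than the mountain-pass or compactness machinery, to be the principal difficulty.
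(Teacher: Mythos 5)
Your overall route is the same as the paper's: mountain pass geometry with the equivalent norm of Lemma \ref{lem2.1} (the role you assign to $\tau^*(N)$ in securing the barrier for $\tau<0$ is exactly what Lemma \ref{lem2.2} proves, via the pointwise bound $s^2(\log s^2-1)\ge -1$ and a fixed radius $\rho=c_{\lambda,\mu}^{-\frac{N-4}{8}}$), boundedness of $(PS)_d$ sequences from $I(u_n)-\frac{1}{2^{**}}\langle I'(u_n),u_n\rangle$ (Lemma \ref{lem2.4}), a compactness threshold at $\frac{2}{N}S^{N/4}$, and the level estimate $\sup_{t\ge0}I(tV_\varepsilon)<\frac{2}{N}S^{N/4}$ for truncated bubbles; your dimensional bookkeeping (gradient term of order $\varepsilon^2$ for $N>6$, log-critical at $N=6$, order $\varepsilon$ at $N=5$; $|V_\varepsilon|_2^2$ resonant at $N=8$; the logarithmic term carrying an extra $\log\frac1\varepsilon$) matches Lemmas \ref{lem3.1}--\ref{lem3.8} up to your different parametrization of $\varepsilon$.

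There is, however, a genuine gap in your compactness step. You assert that $I$ satisfies $(PS)_c$ for every $c<\frac{2}{N}S^{N/4}$ in all cases, the bubbling dichotomy plus $c<c^*$ "ruling out" loss of compactness. That argument requires the weak limit $u$ (a critical point of $I$) to satisfy $I(u)\ge0$: Brezis--Lieb gives $c=I(u)+\frac{2}{N}k$ with $k=\lim_n\int|\Delta(u_n-u)|^2$, and $k>0$ forces $k\ge S^{\frac N4}$, so the contradiction with $c<\frac{2}{N}S^{\frac N4}$ is available only if $I(u)\ge 0$. For a critical point one has $I(u)=\frac{2}{N}|u|_{2^{**}}^{2^{**}}+\frac{\tau}{2}|u|_2^2$, which is nonnegative precisely when $\tau\ge0$ (this is the paper's Lemma \ref{lem2.6}, i.e. cases $(ii)_1$ and part of $(i)_1$). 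When $\tau<0$ --- which covers $(i)_2$, $(ii)_2$, $(ii)_3$, $(iii)$ and the rest of $(i)_1$ --- a critical point may have negative energy, no contradiction arises, and $(PS)_c$ below the threshold cannot be established this way: a bubble of energy $\frac{2}{N}S^{\frac N4}$ could in principle coexist with a nontrivial limit of negative energy. The paper's Lemma \ref{lem2.7} repairs exactly this point by proving a weaker statement that still suffices: assuming the weak limit vanishes, either $k=0$, whence $d=0$, contradicting $d=c_M\ge\alpha>0$, or $k\ge S^{\frac N4}$, whence $d=\frac{2}{N}k\ge\frac{2}{N}S^{\frac N4}$, contradicting the level estimate; hence the weak limit is a nontrivial weak solution, possibly without strong convergence and possibly not at level $c_M$. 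Your existence conclusion survives once this substitution is made, but your claims of full $(PS)_c$ compactness and of a critical point "at level $c\ge\alpha>0$" are unjustified in the negative-$\tau$ cases, which form the majority of the theorem.
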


	\begin{remark}
		In fact, by Lemmas \ref{lem2.2}, \ref{lem2.4}, \ref{lem2.7} and \eqref{eqS3.13}, we see that, when $N=6$, if $\mu>0$, $\tau^*(N)<\tau<0$, $\lambda<\delta_1(\Omega)$ and there exists a $\varphi\in C^{\infty}_0(\Omega,[0,1])$ with $\varphi(x)\equiv1$, $x\in B(0,\frac{\rho_{max}}{2})$, such that $2\mu+\tau\int_{0}^{\rho_{max}}\varphi^2rdr<0$, then the equation $\eqref{eqS1.1}$ has at least a nontrivial weak solution.
	\end{remark}
	\begin{remark}
		Theorem \ref{t1.1} tells us that when $N\ge6$, the $\mu\Delta u~(\eta_{\lambda,\tau}<\mu<0)$ term has a greater impact on the existence of nontrivial weak solutions to the equation $\eqref{eqS1.1}$ than $\lambda u$ and  $\tau u\log u^2$. But for $N=5$, $\tau u\log u^2~(\tau^*(N)<\tau<0)$ plays a leading role in the existence of nontrivial weak solutions to the equation $\eqref{eqS1.1}$. 
	\end{remark}
	
	\section{Some preliminary results}
	
	In this section, we devote to some preliminary Lemmas, which are crucial in our proof of the main results.
	
	
	
	
	\begin{lemma}\label{lem2.1}
		If $\mu>\eta_{\lambda,\tau}$, $\lambda<\delta_1(\Omega)+\tau
		_+$ and $\tau\in\mathbb{R}$, then the norm $\|u\|_1$ is equivalent to $\|u\|$ in $H_{0}^{2}(\Omega )$.
	\end{lemma}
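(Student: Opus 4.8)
The plan is to establish two-sided bounds $C_2\|u\|_1^2\le \|u\|^2\le C_1\|u\|_1^2$ with constants $C_1,C_2>0$ independent of $u$; equivalence of the two norms then follows, and the positivity of $C_2$ simultaneously confirms that $\|\cdot\|$ is genuinely a norm (its associated quadratic form being positive definite). The two ingredients I would use are the variational characterizations already built into the definitions: from the definition of $\beta(\Omega)$ one has $\int_\Omega|\nabla u|^2\,dx\le \beta(\Omega)^{-1}\int_\Omega|\Delta u|^2\,dx$, and since $\delta_1(\Omega)$ is the first eigenvalue of $\Delta^2$ one has $\int_\Omega|u|^2\,dx\le \delta_1(\Omega)^{-1}\int_\Omega|\Delta u|^2\,dx$, both valid for every $u\in H_0^2(\Omega)$. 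Here $\delta_1(\Omega)>0$ is standard, and combining the two inequalities with $\int_\Omega|\nabla u|^2\,dx=-\int_\Omega u\,\Delta u\,dx\le |u|_2\,\|u\|_1$ shows $\beta(\Omega)\ge\delta_1(\Omega)^{1/2}>0$.

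The upper bound is immediate. Writing
$$\|u\|^2=\|u\|_1^2+\mu\int_\Omega|\nabla u|^2\,dx-(\lambda-\tau_+)\int_\Omega|u|^2\,dx$$
and estimating the last two terms in absolute value by the inequalities above yields $\|u\|^2\le\bigl(1+|\mu|\beta(\Omega)^{-1}+|\lambda-\tau_+|\delta_1(\Omega)^{-1}\bigr)\|u\|_1^2$, so $C_1$ may be taken to be this bracketed constant.

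For the lower bound I would split into cases according to the signs of $\mu$ and of $\lambda-\tau_+$, since these dictate which embedding inequality must be applied in order to keep the estimate pointing in the right direction. If $\lambda-\tau_+\le 0$, the term $-(\lambda-\tau_+)\int_\Omega|u|^2\,dx$ is nonnegative and may be discarded; then $\|u\|^2\ge\|u\|_1^2$ when $\mu\ge 0$, while for $\mu<0$ the bound $\mu\int_\Omega|\nabla u|^2\,dx\ge\mu\beta(\Omega)^{-1}\|u\|_1^2$ gives $\|u\|^2\ge(1+\mu\beta(\Omega)^{-1})\|u\|_1^2$, whose coefficient is positive exactly because $\mu>\eta_{\lambda,\tau}=-\beta(\Omega)$ in this regime. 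If $\lambda-\tau_+>0$ and $\mu\ge 0$, I would instead use $-(\lambda-\tau_+)\int_\Omega|u|^2\,dx\ge -(\lambda-\tau_+)\delta_1(\Omega)^{-1}\|u\|_1^2$ and drop the nonnegative $\mu$-term, obtaining $\|u\|^2\ge(1-(\lambda-\tau_+)\delta_1(\Omega)^{-1})\|u\|_1^2$, positive thanks to $\lambda<\delta_1(\Omega)+\tau_+$.

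The decisive case, and essentially the only substantive point of the argument, is $\lambda-\tau_+>0$ together with $\mu<0$, where both lower-order terms carry the unfavourable sign. Applying both embedding estimates gives
$$\|u\|^2\ge\Bigl(1+\frac{\mu}{\beta(\Omega)}-\frac{\lambda-\tau_+}{\delta_1(\Omega)}\Bigr)\|u\|_1^2,$$
and the coefficient is positive precisely when $\mu>\frac{\beta(\Omega)}{\delta_1(\Omega)}(\lambda-\tau_+)-\beta(\Omega)=\eta_{\lambda,\tau}$, which is the standing hypothesis. Thus $C_2>0$ in every case. I expect the main (indeed only real) obstacle to be reconciling the two competing negative contributions in this last case, where the exact form of $\eta_{\lambda,\tau}$ and the condition $\lambda<\delta_1(\Omega)+\tau_+$ are both consumed; the remaining cases are routine.
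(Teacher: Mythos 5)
Your proof is correct and complete: the two variational inequalities $\int_\Omega|\nabla u|^2\le\beta(\Omega)^{-1}\|u\|_1^2$ and $\int_\Omega|u|^2\le\delta_1(\Omega)^{-1}\|u\|_1^2$, together with the sign case analysis, give two-sided bounds with positive constants exactly under the stated hypotheses, and your observation that $\beta(\Omega)\ge\delta_1(\Omega)^{1/2}>0$ properly justifies dividing by $\beta(\Omega)$. The paper itself omits this proof, deferring to \cite{HL}, and your argument is precisely the standard one used there; the only cosmetic remark is that in your decisive case ($\mu<0$, $\lambda-\tau_+>0$) the condition $\lambda<\delta_1(\Omega)+\tau_+$ is actually a consequence of $\eta_{\lambda,\tau}<\mu<0$ rather than an independently consumed hypothesis, it being needed on its own only in the case $\mu\ge0$.
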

	
	\begin{proof}
		A similar proof can be found in \cite{HL}. We omit it here.
	\end{proof}
	
	\begin{lemma}\label{lem2.2}
		Assume $N \ge 5$. If $\mu>\eta_{\lambda,\tau}$, $\lambda<\delta_1(\Omega)+\tau
		_+$ and $\tau>\tau^*(N)$,  then  the functional $I(u)$ has Mountain pass geometry structure:

		(i) there exist $\alpha$, $\rho$$>0$ such that $I(u)\ge\alpha$ for all $\|u\|=\rho$;
		
		(ii) there exists $\omega\in H_{0}^{2}(\Omega)$ such that $\|\omega\| \ge \rho$ and $I(\omega)<0$.
	\end{lemma}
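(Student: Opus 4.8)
The plan is to verify the two geometric conditions separately, after first rewriting $I$ in a form adapted to the norm $\|\cdot\|$. Using $\|u\|^2=\int_\Omega(|\Delta u|^2+\mu|\nabla u|^2-(\lambda-\tau_+)|u|^2)\,dx$ together with $-\frac{\tau}{2}\int_\Omega u^2(\log u^2-1)\,dx=-\frac{\tau}{2}\int_\Omega u^2\log u^2\,dx+\frac{\tau}{2}\int_\Omega u^2\,dx$, I would record the identity
\[
I(u)=\frac12\|u\|^2-\frac{1}{2^{**}}\int_\Omega|u|^{2^{**}}\,dx-\frac{\tau}{2}\int_\Omega u^2\log u^2\,dx+\frac{\tau-\tau_+}{2}\int_\Omega|u|^2\,dx .
\]
For $\tau\ge0$ the last term vanishes, while for $\tau<0$ it combines with the logarithmic integral to give $I(u)=\frac12\|u\|^2-\frac{1}{2^{**}}\int_\Omega|u|^{2^{**}}\,dx+\frac{|\tau|}{2}\int_\Omega u^2(\log u^2-1)\,dx$. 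By Lemma \ref{lem2.1} the standing hypotheses make $\|\cdot\|$ equivalent to $\|\cdot\|_1$, so the Sobolev embedding applies to $\|\cdot\|$; in particular $\int_\Omega|u|^{2^{**}}\,dx\le c_{\lambda,\mu}\|u\|^{2^{**}}$ and $\int_\Omega|u|^2\,dx\le C\|u\|^2$.

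For part (i) I would treat the two sign regimes. When $\tau\ge0$, I split the logarithmic integral over $\{|u|\le1\}$ and $\{|u|>1\}$: on the first set $u^2\log u^2\le0$, so the (potentially dangerous, sub-quadratic) contribution $-\frac{\tau}{2}\int_{\{|u|\le1\}}u^2\log u^2\,dx\ge0$ carries a \emph{favourable} sign; on the second set I use that for every small $\delta>0$ one has $\log u^2\le C_\delta|u|^\delta$, whence $\int_{\{|u|>1\}}u^2\log u^2\,dx\le C_\delta\int_\Omega|u|^{2+\delta}\,dx\le C_\delta'\|u\|^{2+\delta}$ with $2<2+\delta<2^{**}$. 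This gives $I(u)\ge\frac12\|u\|^2-C\|u\|^{2^{**}}-C\|u\|^{2+\delta}$, and since both negative terms have order strictly larger than $2$, choosing $\rho$ small yields $I(u)\ge\alpha>0$ on $\|u\|=\rho$. When $\tau^*(N)<\tau<0$ the sub-quadratic part of the logarithm has the wrong sign, so instead I use the sharp pointwise bound $u^2(\log u^2-1)\ge-1$ (the global minimum of $s\mapsto s^2(\log s^2-1)$), which gives $\frac{|\tau|}{2}\int_\Omega u^2(\log u^2-1)\,dx\ge-\frac{|\tau|}{2}|\Omega|$ and hence $I(u)\ge h(\|u\|)-\frac{|\tau|}{2}|\Omega|$ with $h(\rho):=\frac12\rho^2-\frac{c_{\lambda,\mu}}{2^{**}}\rho^{2^{**}}$. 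Maximising $h$, the maximum is attained at $\rho_0=c_{\lambda,\mu}^{-1/(2^{**}-2)}$ and equals $h(\rho_0)=\frac{2}{N}\rho_0^2=\frac{2}{N}c_{\lambda,\mu}^{-(N-4)/4}$, so taking $\rho=\rho_0$ makes $\alpha:=h(\rho_0)-\frac{|\tau|}{2}|\Omega|$ strictly positive \emph{precisely} when $|\tau|<\frac{4}{N|\Omega|}c_{\lambda,\mu}^{-(N-4)/4}$, i.e. when $\tau>\tau^*(N)$. This is exactly where the threshold $\tau^*(N)$ originates.

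For part (ii) I would fix any $v\in H_0^2(\Omega)\setminus\{0\}$ and analyse $t\mapsto I(tv)$. Writing $\log(t^2v^2)=2\log t+\log v^2$ gives
\[
I(tv)=\frac{t^2}{2}A-\frac{t^{2^{**}}}{2^{**}}\int_\Omega|v|^{2^{**}}\,dx-\tau t^2\log t\int_\Omega v^2\,dx-\frac{\tau t^2}{2}\int_\Omega v^2(\log v^2-1)\,dx,
\]
where $A=\int_\Omega(|\Delta v|^2+\mu|\nabla v|^2-\lambda v^2)\,dx$. Since $2^{**}>2$, the critical term $-\frac{t^{2^{**}}}{2^{**}}\int_\Omega|v|^{2^{**}}\,dx$ dominates every other term (which grow at most like $t^2\log t$) as $t\to+\infty$, so $I(tv)\to-\infty$ regardless of the sign of $\tau$. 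Choosing $t_0$ large enough that $I(t_0v)<0$ and $\|t_0v\|=t_0\|v\|\ge\rho$, I set $\omega=t_0v$.

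The main obstacle is clearly the regime $\tau^*(N)<\tau<0$ in part (i): there the logarithm contributes a negative, genuinely sub-quadratic term near the origin that cannot be absorbed into $\frac12\|u\|^2$ by shrinking $\rho$, so the usual ``small sphere'' argument fails. The resolution is to \emph{abandon} small $\rho$, use the exact lower bound $u^2(\log u^2-1)\ge-1$, and optimise the radius $\rho$ against the critical term; the condition $\tau>\tau^*(N)$ is precisely the inequality that keeps the resulting optimal value positive.
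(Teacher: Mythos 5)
Your proposal is correct and takes essentially the same approach as the paper: for $\tau\ge 0$ you absorb the logarithmic term into a superquadratic power (the paper uses $u^2\log u^2\le C u^{2^{**}}$ on $\{u^2\ge 1\}$, you use $C_\delta|u|^{2+\delta}$ with $2<2+\delta<2^{**}$, an immaterial difference), while for $\tau^*(N)<\tau<0$ you invoke the same pointwise bound $u^2(\log u^2-1)\ge -1$ and the same radius $\rho=c_{\lambda,\mu}^{-\frac{N-4}{8}}$ (the maximizer of $\frac12\rho^2-\frac{c_{\lambda,\mu}}{2^{**}}\rho^{2^{**}}$), which is exactly how the paper's $\alpha$ and the threshold $\tau^*(N)$ arise. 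Part (ii) also coincides with the paper's argument: the critical term dominates the $t^2\log t$ growth, so $I(tv)\to-\infty$ as $t\to+\infty$.
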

	
	\begin{proof}
		A straightforward computation gives us
		$$
		\begin{array}{ll}
			I(u)=\ds \frac{1}{2} \int{\left(|\Delta u|^2+\mu |\nabla u|^2-(\lambda-\tau)|u|^2\right)}-\frac{1}{2^{**}}\int{|u|^{2^{**}}}-\frac{\tau}{2} \int{u^2 \log {u^2}}.
		\end{array}
		$$	
		
		If $\tau>0$, then
		$$
		I(u)=\ds\frac{1}{2}\|u\|^{2}-\frac{1}{{2^{**}}}|u|_{2^{**}}^{2^{**}}-\frac{\tau}{2} \int{u^2\log {u^2}}.
		$$
		It follows from the fact $s^2 \log s^2 \leq Cs^{2^{**}}$ for all $s\in  [1,+\infty)$ that
\begin{align*}
			\ds \tau \int{u^2\log {u^2}}&=\ds \tau \int_{u^2 \ge 1}{u^2 \log{u^2}}dx+\tau \ds \int_{u^2 \le 1}{u^2 \log{u^2}}dx\\
			&\le \tau \ds \int_{u^2 \ge 1}{u^2 \log{u^2}}dx\\
			&\le C \tau \ds \int_{u^2 \ge 1}{u^{2^{**}}}dx\\
			&\le C\tau |u|_{2^{**}}^{2^{**}}.
\end{align*}
		Hence
		$$
		\begin{array}{ll}
			I(u)&\ge \ds \frac{1}{2}\|u\|^2-\frac{1}{2^{**}}|u|_{2^{**}}^{2^{**}}-C\frac{\tau}{2}|u|_{2^{**}}^{2^{**}}\\[3mm]
			&\ge \ds \frac{1}{2}\|u\|^{2}-C\|u\|^{2^{**}}, \\[3mm]
		\end{array}
		$$
		which implies that there exist  $\alpha>0$ and $\rho>0$ such that $I(u)\ge \alpha>0$ for all $\|u\|=\rho$.
		
		If $\tau^*(N)<\tau<0$, then we have
		$$
		\begin{array}{ll}
			I(u)
			=\ds \frac{1}{2}\|u\|^2-\frac{1}{2^{**}}|u|_{2^{**}}^{2^{**}}-\frac{\tau}{2} \int{u^2 (\log {u^2}-1)}.\\[3mm]
		\end{array}
		$$
		
		We use the following fact that
		$$\frac{\tau}{2} \int{u^2 (\log {u^2}-1)}\le-\frac{\tau}{2}|\Omega|,$$
		which implies
		\begin{equation}\label{eqS2.1}
			I(u)\ge\frac{1}{2}\|u\|^{2}-\frac{c_{\lambda,\mu}}{{{2}^{**}}}\|u\|^{2^{**}}+\frac{\tau}{2}|\Omega|.
		\end{equation}
		
		Putting $\alpha:=\frac{2}{N}c_{\lambda,\mu}^{-\frac{N-4}{4}}+\frac{\tau}{2}|\Omega|$ and $\rho:=c_{\lambda,\mu}^{-\frac{N-4}{8}}$, then $\alpha>0$ and $\rho>0$. By $\eqref{eqS2.1}$,
		$$
		I(u)\ge 	\frac{1}{2}\rho^2-\frac{c_{\lambda,\mu}}{2^{**}}\rho^{2^{**}}+\frac{\tau}{2}|\Omega|=\frac{2}{N}c_{\lambda,\mu}^{-\frac{N-4}{4}}+\frac{\tau}{2}|\Omega|=\alpha>0,
		$$
		for all $\|u\|=\rho$.
		
		
		On the other hand, letting $u\in H^2_0(\Omega)\setminus\{0\}$ be a fixed function, then we have, as $ t\to+\infty$,
		$$
		\begin{array}{ll}
			I(tu)&=\ds \frac{t^2}{2}\ds \int{\left(|\Delta u|^2+\mu |\nabla u|^2-\lambda|u|^{2}\right)}-\frac{t^{2^{**}}}{2^{**}} \int{|u|^{2^{**}}}-\frac{\tau}{2}t^2 \int{u^2 (\log t^2 {u^2}-1)}\\
			&=\ds \frac{t^2}{2} \int{\left(|\Delta u|^2+\mu |\nabla u|^2-\lambda|u|^{2}\right)}-\frac{t^{2^{**}}}{2^{**}} \int|u|^{2^{**}}\\
			&-\ds\frac{\tau}{2}t^2\log t^2 \int u^2-\frac{\tau}{2}t^2 \int u^2(\log u^2-1) \to -\infty.
		\end{array}
		$$
		Therefore, it is not hard to find a function $\omega\in H^2_0(\Omega)$ such that $\|\omega\|\ge\rho$ and $I(\omega)<0$.
	\end{proof}

	\begin{remark}
		According to the  Mountain Pass Theorem without $PS$ compactness condition, we can deduce that there exists a sequence $\{u_n\}$ in $H_0^2(\Omega)$, such that as $n \rightarrow \infty$,
		\begin{equation}\label{eqS2.2}
			I(u_n)\rightarrow c_M~and~I'(u_n)\rightarrow 0,
		\end{equation}
		where
		$$
		c_M:=\underset{\gamma \in \Gamma }{\mathop{\inf }}\,\underset{t\in \left[ 0,1 \right]}{\mathop{\sup }}\,I(\gamma (t))>0,
		$$
		and
		$$
		\Gamma:=\Big\{\gamma\in C([0,1], H^2_0(\Omega)),~\gamma(0)= 0, ~I(\gamma(1))<0\Big\}.
		$$
	\end{remark}
	
	\begin{lemma}\label{lem2.4}
		Assume that $\mu>\eta_{\lambda,\tau}$, $\lambda<\delta_1(\Omega)+\tau
		_+$ and $\tau\in\mathbb{R}$. Then any $(PS)_d$ sequence $\{u_n\}$ must be bounded in $H^2_0(\Omega)$.
	\end{lemma}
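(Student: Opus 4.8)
The plan is the standard variational bootstrap: combine the functional value $I(u_n)\to d$ with the derivative estimate $\langle I'(u_n),u_n\rangle = o(1)\|u_n\|$, taking a suitable linear combination to kill the leading critical term and expose coercivity of the quadratic part. Writing out the two quantities,
\begin{align*}
	I(u_n) &= \frac12\int\bigl(|\Delta u_n|^2+\mu|\nabla u_n|^2-\lambda|u_n|^2\bigr)-\frac{1}{2^{**}}\int|u_n|^{2^{**}}-\frac{\tau}{2}\int u_n^2(\log u_n^2-1),\\
	\langle I'(u_n),u_n\rangle &= \int\bigl(|\Delta u_n|^2+\mu|\nabla u_n|^2-\lambda|u_n|^2\bigr)-\int|u_n|^{2^{**}}-\tau\int u_n^2\log u_n^2,
\end{align*}
I would form $I(u_n)-\frac{1}{2^{**}}\langle I'(u_n),u_n\rangle$. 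The critical-nonlinearity terms then combine with coefficient $\frac{1}{2}-\frac{1}{2^{**}}=\frac{2}{N}>0$ in front of the quadratic form, while the $|u_n|^{2^{**}}$ integral cancels exactly. This leaves, up to the logarithmic remainder, a multiple of $\|u_n\|^2$ (using the norm that incorporates $\lambda-\tau_+$) on the left and $d+o(1)+o(1)\|u_n\|$ on the right.

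The key step is controlling the leftover logarithmic contribution. After the cancellation, the surviving log terms assemble into something like $c_1\tau\int u_n^2\log u_n^2 + c_2\tau\int u_n^2$ with explicit constants $c_1,c_2$ depending only on $N$. I would split the integral over $\{u_n^2\le 1\}$ and $\{u_n^2>1\}$ exactly as in the proof of Lemma \ref{lem2.2}: on the sublevel set $u_n^2\log u_n^2$ is bounded (pointwise by $e^{-1}$, hence by $e^{-1}|\Omega|$ after integrating), and on the superlevel set one invokes the growth bound $s^2\log s^2\le Cs^{2^{**}}$ together with a parameter $\varepsilon>0$ so that $\int_{u_n^2>1}u_n^2\log u_n^2\le \varepsilon|u_n|_{2^{**}}^{2^{**}}+C_\varepsilon$; for the subcritical power one uses $|u_n|_{2^{**}}^{2^{**}}=\int|u_n|^{2^{**}}$, which is itself controlled by $\|u_n\|^{2^{**}}$ via the Sobolev embedding and the equivalence of norms from Lemma \ref{lem2.1}. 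The sign of $\tau$ then has to be handled on both cases, which is exactly why the definition uses $\lambda-\tau_+$: absorbing the $\tau\int u_n^2$ piece into the quadratic form is precisely what makes $\|u_n\|^2$ the right coercive quantity regardless of the sign of $\tau$.

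The main obstacle I anticipate is that the logarithm is \emph{not} simply dominated by the quadratic term, so a crude bound fails: one cannot absorb $\int u_n^2\log u_n^2$ into $\|u_n\|^2$ globally, because $\log u_n^2$ grows at large values. The fix is to exploit that after subtracting $\frac{1}{2^{**}}\langle I'(u_n),u_n\rangle$ the \emph{net} coefficient of $\int|u_n|^{2^{**}}$ vanishes, so the only way the critical growth enters is through the $\varepsilon$-small log remainder, which can be made a genuinely lower-order perturbation. Concretely, I would arrive at an inequality of the shape
$$
\frac{2}{N}\|u_n\|^2 \le d + \varepsilon\,C\|u_n\|^{2^{**}-\,\text{(something subcritical)}} + o(1)\|u_n\| + C,
$$
and, since the exponent on the right is strictly less than $2$ once the critical term has been cancelled, standard Young-inequality absorption forces $\{\|u_n\|\}$ to be bounded. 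I would finally translate boundedness in the $\|\cdot\|$-norm back to boundedness in $\|\cdot\|_1$, and hence in $H_0^2(\Omega)$, using Lemma \ref{lem2.1}.
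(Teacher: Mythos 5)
Your opening move coincides with the paper's: form $I(u_n)-\frac{1}{2^{**}}\langle I'(u_n),u_n\rangle$ so that the critical terms cancel and a positive multiple $\frac{2}{N}$ of the quadratic form survives. The genuine gap lies in how you then control the leftover logarithmic term in the case $\tau>0$, where one needs an \emph{upper} bound on $\int u_n^2\log u_n^2$. Your estimate
\[
\int_{\{u_n^2>1\}}u_n^2\log u_n^2\le \varepsilon\,|u_n|_{2^{**}}^{2^{**}}+C_\varepsilon,
\qquad
|u_n|_{2^{**}}^{2^{**}}\le C\|u_n\|^{2^{**}}\ \ \text{(Sobolev)},
\]
reintroduces exactly the critical quantity that the linear combination was designed to eliminate, and after the Sobolev embedding it produces the term $\varepsilon C\|u_n\|^{2^{**}}$, whose exponent is $2^{**}=\frac{2N}{N-4}>2$. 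Your assertion that ``the exponent on the right is strictly less than $2$'' is therefore false: $\varepsilon$ is a fixed constant independent of $n$, so if $\|u_n\|\to\infty$ along a subsequence, the term $\varepsilon C\|u_n\|^{2^{**}}$ dominates $\frac{2}{N}\|u_n\|^2$ and no Young-type absorption can close the argument. Cancelling the net coefficient of $\int|u_n|^{2^{**}}$ does not make this remainder lower order; the moment you estimate the logarithm by a critical power you have undone that cancellation. The paper avoids this entirely: for $\tau>0$ it first forms the \emph{second} combination $I(u_n)-\frac12\langle I'(u_n),u_n\rangle$ to obtain the preliminary bound $|u_n|_2^2\le C+C\|u_n\|$, and then controls the logarithm by the logarithmic Sobolev inequality
\[
\int u^2\log u^2\le \frac{a^2}{\pi}\int|\nabla u|^2+\bigl(\log|u|_2^2-N(1+\log a)\bigr)|u|_2^2,
\]
whose gradient term is quadratic in $\|u_n\|$ but carries a coefficient that can be made arbitrarily small by choosing $a$ small (hence absorbable into $\frac{2}{N}\|u_n\|^2$), while the remaining $|u_n|_2^2\log|u_n|_2^2$ piece is genuinely subquadratic, of order $\|u_n\|^{(2+\delta)/2}$ with $\delta\in(0,1)$, thanks to the preliminary $L^2$ bound. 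Both ingredients are absent from your proposal. (Your $\varepsilon$-idea could be repaired without log-Sobolev by instead substituting $|u_n|_{2^{**}}^{2^{**}}=\int(|\Delta u_n|^2+\mu|\nabla u_n|^2-\lambda u_n^2)-\tau\int u_n^2\log u_n^2+o(1)\|u_n\|$ from the Palais--Smale relation itself, which makes the $\varepsilon$-term quadratic rather than of power $2^{**}$; but that is not what you wrote.)

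There is also a secondary slip for $\tau<0$: you claim the $\tau\int u_n^2$ piece is absorbed into the quadratic form ``precisely'' because the norm uses $\lambda-\tau_+$, but for $\tau<0$ one has $\tau_+=0$, so the norm contains no $\tau$-correction and the negative term $\frac{\tau}{2}|u_n|_2^2$ remains unaccounted for. Splitting $\int u_n^2\log u_n^2$ alone over sub- and superlevel sets does not help, since $|u_n|_2^2$ can be of the same (quadratic) order as $\|u_n\|^2$, with a constant that for large $|\tau|$ destroys coercivity. The paper's fix is to merge the two leftover terms into
\[
-\frac{2}{N}\tau\int u_n^2\log\bigl(e^{-N/4}u_n^2\bigr),
\]
which has a favorable sign coefficient for $\tau<0$ and is bounded below by the constant $\frac{2}{N}\tau\,e^{N/4-1}|\Omega|$ via the pointwise inequality $s\log(e^{-N/4}s)\ge -e^{N/4-1}$. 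Your splitting works here only after this merging; as written, the $\tau<0$ case does not close either.
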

	\begin{proof}
		According to the definition of $(PS)_d$ sequence, we have
		\begin{equation}\label{eqS2.3}
			\begin{array}{ll}
				I(u_n)&=\ds \frac{1}{2} \int\left(|\Delta u_n|^{2}+\mu |\nabla u_n|^{2}-\lambda |u_n|^{2}\right)-\frac{1}{2^{**}} \int |u_n|^{2^{**}}-\frac{\tau}{2} \int{u_n^2(\log {u_n^2}-1)}\\
				&=d+o_n(1),\\
			\end{array}
		\end{equation}
		and
		\begin{equation}\label{eqS2.4}
			\begin{array}{ll}
				\langle {I}'(u_n),u_n  \rangle&=\ds \int(|\Delta u_n|^2+\mu |\nabla u_n|^2-\lambda |u_n|^2)-\int|u_n|^{2^{**}}-\tau \int u_n^2 \log {u_n^2}\\
				&=o_n(1)\|u_n\|.
			\end{array}
		\end{equation}
		
		For $\tau=0$, it has been proved in \cite{HL}. 
		
		If $\tau>0$, by $\eqref{eqS2.3}$ and $\eqref{eqS2.4}$, we deduce
		$$
		\begin{array}{ll}
			d+{o_n}(1)+{{o}_{n}}(1)\|u_n\| &\ds \ge  I(u_n)-\frac{1}{2}\langle {I}'(u_n),{u_n} \rangle \\ [3mm]
			&=\ds(\frac{1}{2}-\frac{1}{2^{**}})\int|u_n|^{2^{**}}+\frac{ \tau }{2} \int|u_n|^2\\[3mm]
			&=\ds \frac{2}{N}|u_n|_{2^{**}}^{2^{**}}+\frac{\tau}{2}|u_n|_2^2\\[3mm]
			&\ge\ds \frac{\tau}{2}|u_n|^2_2,\\[3mm]
		\end{array}
		$$
		which implies that
		\[
		|u_n|_2^2 \le C+C\|u_n\|.
		\]
		
		Using the following inequality (see \cite{SW} or see the theorem 8.14 in \cite{LL})
		\begin{equation*}
			\ds \int u^2\log u^2\le \frac{a^2}{\pi}\ds \int|\nabla u|^2+(\log|u|^2_{2}-N(1+\log a))|u|^2_{2}, ~\forall a>0,~\forall u\in H_0^2(\Omega),
		\end{equation*}
		we know that for $n$ large enough,
		\begin{align*}
				&d+{o_n}(1)+{{o}_{n}}(1)\|{{u}_{n}}\|  \\
				\ge& I(u_n)-\frac{1}{2^{**}}\langle {I}'(u_n),{u_n} \rangle \\
				=& \frac{2}{N}\|u_n\|^2-\frac{2}{N}\tau \ds \int u_n^2 \log {u_n^2}+\frac{\tau}{2^{**}}|u_n|_2^2\\
				\ge& \frac{2}{N}\left\|u_{n}\right\|^2-\frac{2}{N}\tau\left[\frac{a^2}{\pi}\ds \int|\nabla u_n|^2+(\log\left|u_{n}\right|^2_{2}-N(1+\log a))\left|u_{n}\right|^2_{2}\right]+\frac{\tau}{2^{**}}|u_n|_2^2\\
				\ge& \frac{2}{N}\left\|u_{n}\right\|^2-\frac{2}{N}\tau \frac{a^2}{\pi \beta(\Omega)}\left\|u_{n}\right\|^2-\frac{2}{N}\tau |u_n|_2^2 \log{|u_n|_2^2}+\left[2(1+\log a)+\frac{1}{2^{**}}\right]\tau|u_n|_2^2\\
				\ge& \frac{2}{N}\left\|u_{n}\right\|^2-\frac{1}{N}\|u_n\|^2-C(|u_n|_2^{2-\delta}+|u_n|_2^{2+\delta})-C\|u_n\|\\
				\ge& \frac{1}{N}\|u_n\|^2-C(\|u_n\|^{\frac{2-\delta}{2}}+\|u_n\|^{\frac{2+\delta}{2}}+1)-C\|u_n\|,
		\end{align*}
		where we choose $a>0$ such that $\frac{a^2}{\pi \beta(\Omega)}\tau<\frac{1}{2}$ and $\delta\in (0,1)$.
		
		When $\tau<0$, then we have
			\begin{align*}
				d+{o_n}(1)+{{o}_{n}}(1)\|{{u}_{n}}\| &\ge I(u_n)-\frac{1}{2^{**}}\langle {I}'(u_n),{u_n} \rangle \\
				&= \frac{2}{N}\|u_n\|^2-\frac{2}{N}\tau \ds \int_\Omega u_n^2 \log {u_n^2}dx+\frac{\tau}{2}\int_\Omega {u_n}^2dx\\
				&=\frac{2}{N}\|u_n\|^2-\frac{2}{N}\tau \ds \int_\Omega u_n^2 \left(\log {u_n^2}-\frac{N}{4}\right)dx\\
				&=\frac{2}{N}\|u_n\|^2-\frac{2}{N}\tau \ds \int_\Omega u_n^2 \log \left( {e^{-\frac{N}{4}}u_n^2} \right)dx\\
				&\ge \frac{2}{N}\|u_n\|^2-\frac{2}{N}\tau \ds \int_{\{e^{-\frac{N}{4}}u_n^2\leq1\}} u_n^2 \log \left( {e^{-\frac{N}{4}}u_n^2} \right)dx\\
				&\ge \frac{2}{N}\|u_n\|^2-\frac{2}{N}\tau \ds \int_{\{e^{-\frac{N}{4}}u_n^2\leq1\}} -e^{\frac{N}{4}-1}dx\\
				&\ge \frac{2}{N}\|u_n\|^2+\frac{2\tau}{N} e^{\frac{N}{4}-1}|\Omega|.
			\end{align*}
		So $\{ {{u}_{n}} \}$ is bounded in $H_{0}^{2}(\Omega)$.
	\end{proof}

	\begin{lemma}\label{lem2.5}
		We assume that $\{u_n\}$ is bounded in $H_0^2(\Omega)$ such that $u_n$ converges to $u$ a.e. in $\Omega$, then
		\begin{equation}\label{eqS2.5}
			\lim\limits_{n\to\infty}\ds \int_\Omega  u^2_{n}\log u^2_{n}dx=\ds \int_\Omega u^2\log u^2dx.
		\end{equation}  	
	\end{lemma}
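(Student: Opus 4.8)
The plan is to deduce \eqref{eqS2.5} from Vitali's convergence theorem on the finite-measure space $\Omega$. First I would record an elementary pointwise bound on $g(s):=s^2\log s^2$ (with $g(0):=0$). The function $g$ is continuous on $[0,+\infty)$, and on $[0,1]$ one has $|g(s)|=-s^2\log s^2\le e^{-1}$, the maximum being attained at $s=e^{-1/2}$; while for $s\ge 1$ the logarithm is dominated by any positive power of $s$. Hence, fixing an exponent $q$ with $2<q<2^{**}$, there is a constant $C_q>0$ such that
$$
|s^2\log s^2|\le e^{-1}+C_q s^{q},\qquad s\ge 0,
$$
and therefore $|u_n^2\log u_n^2|\le e^{-1}+C_q|u_n|^{q}$ pointwise in $\Omega$.

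Next I would check the two hypotheses of Vitali's theorem. Since $u_n\to u$ a.e. and $g$ is continuous, it follows that $u_n^2\log u_n^2\to u^2\log u^2$ a.e. in $\Omega$. For the uniform integrability, note that boundedness of $\{u_n\}$ in $H_0^2(\Omega)$ together with the Sobolev embedding $H_0^2(\Omega)\hookrightarrow L^{2^{**}}(\Omega)$ yields $M:=\sup_n|u_n|_{2^{**}}<+\infty$. For any measurable set $E\subset\Omega$, Hölder's inequality with the conjugate exponents $2^{**}/q$ and $2^{**}/(2^{**}-q)$ gives
$$
\int_E|u_n|^{q}\,dx\le\Big(\int_E|u_n|^{2^{**}}\,dx\Big)^{q/2^{**}}|E|^{1-q/2^{**}}\le M^{q}\,|E|^{1-q/2^{**}}.
$$
Since $2<q<2^{**}$ the exponent $1-q/2^{**}$ is strictly positive, so combining with the pointwise bound,
$$
\int_E\big|u_n^2\log u_n^2\big|\,dx\le e^{-1}|E|+C_qM^{q}|E|^{1-q/2^{**}},
$$
and the right-hand side tends to $0$ as $|E|\to 0$, uniformly in $n$. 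This is exactly the uniform integrability of $\{u_n^2\log u_n^2\}$.

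Because $\Omega$ is a bounded domain, $|\Omega|<+\infty$, so both hypotheses of Vitali's convergence theorem are met; it follows that $u^2\log u^2\in L^1(\Omega)$ and $\int_\Omega u_n^2\log u_n^2\,dx\to\int_\Omega u^2\log u^2\,dx$, which is \eqref{eqS2.5}. The only delicate step is the uniform integrability: the integrand is unbounded at infinity and is not dominated by any fixed $L^1$ function, so plain dominated convergence is unavailable. The gain comes precisely from choosing $q$ strictly below the critical exponent $2^{**}$, which simultaneously absorbs the logarithmic factor for large values of $|u_n|$ and produces the strictly positive power $1-q/2^{**}$ of $|E|$ in Hölder's inequality; the behaviour near $s=0$ is harmless since $s^2\log s^2$ is bounded there.
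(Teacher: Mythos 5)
Your proof is correct and complete, but it is not the paper's argument: the paper gives no self-contained proof at all, merely noting that a function in $H_0^2(\Omega)$ lies in $H_0^1(\Omega)$ and then citing the analogous convergence lemma of \cite{DHPZ}, which is established there in the $H_0^1$, critical-growth-plus-logarithm setting. Your route through Vitali's convergence theorem is a clean replacement: the elementary bound $|s^2\log s^2|\le e^{-1}+C_q s^q$ for a fixed $2<q<2^{**}$, the Sobolev bound $M=\sup_n|u_n|_{2^{**}}<\infty$, and H\"older's inequality on an arbitrary measurable set $E\subset\Omega$ give the explicit uniform-integrability modulus $e^{-1}|E|+C_qM^q|E|^{1-q/2^{**}}$, while continuity of $s\mapsto s^2\log s^2$ (extended by $0$ at $s=0$) converts the assumed a.e.\ convergence of $u_n$ into a.e.\ convergence of the integrands; since $|\Omega|<\infty$, Vitali yields both $u^2\log u^2\in L^1(\Omega)$ and \eqref{eqS2.5}, and it does so for the full sequence, with no subsequence extraction. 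Compared with the paper's citation, your argument keeps the result self-contained, uses only the continuous embedding $H_0^2(\Omega)\hookrightarrow L^{2^{**}}(\Omega)$ rather than any compactness of embeddings, and actually proves the more general fact that a.e.\ convergence plus a uniform $L^p$ bound for a single $p>2$ suffices (boundedness in $H_0^2(\Omega)$ enters only through that); the one thing the paper's approach buys is brevity.
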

	
	\begin{proof}
		If $u\in H_{0}^{2}(\Omega )\setminus\{0\}$, then $u\in H_{0}^{1}(\Omega )\setminus\{0\}$ is apparent. According to \cite{DHPZ}, we can obtain the result.
		
	\end{proof}
	
	\begin{lemma}\label{lem2.6}
		Assume that $N\ge 5$, $\mu>\eta_{\lambda,\tau}$, $\lambda<\delta_1(\Omega)+\tau
		$ and $\tau\ge0$. If $d< \frac{2}{N} S^{\frac{N}{4}}$, then $I(u)$ satisfies the  $(PS)_d$ condition, where $S:~\triangleq inf \{|\Delta u|_2^2 :~u \in H^2 (\R^N),~|u|_{2^{**}} =1\}$ is the best Sobolev embedding constant for $H^2(\R^N)$ embedded into $L^{2^{**}}(\R^N)$.		
	\end{lemma}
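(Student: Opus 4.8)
The plan is to establish the Palais–Smale condition at levels below the threshold $\frac{2}{N}S^{N/4}$ by the standard concentration-compactness strategy for critical problems. Let $\{u_n\}$ be a $(PS)_d$ sequence with $d<\frac{2}{N}S^{N/4}$. By Lemma \ref{lem2.4} the sequence is bounded in $H_0^2(\Omega)$, so up to a subsequence $u_n\rightharpoonup u$ weakly in $H_0^2(\Omega)$, strongly in $L^p(\Omega)$ for all $p<2^{**}$, and pointwise a.e. in $\Omega$. The first task is to pass to the limit in the equation $I'(u_n)\to 0$ and show that the weak limit $u$ is itself a critical point, i.e. $I'(u)=0$. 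The linear terms and the subcritical $L^p$ convergence handle $\mu\Delta u$ and $\lambda u$ routinely; for the logarithmic term I would invoke Lemma \ref{lem2.5} together with the nonnegativity hypothesis $\tau\ge 0$ (which controls the sign of the relevant integrals), and for the critical term one uses the Brézis–Lieb lemma to split $|u_n|^{2^{**}}$ into $|u|^{2^{**}}+|u_n-u|^{2^{**}}+o_n(1)$.

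Next I would set $v_n:=u_n-u$ and quantify the loss of compactness. Writing the energy and the derivative identities for $u_n$, subtracting the corresponding identities for the critical point $u$, and applying Brézis–Lieb to both $\int|u_n|^{2^{**}}$ and $\int|\Delta u_n|^2$, one obtains that $v_n\rightharpoonup 0$ with
\begin{equation}\label{eqSplit}
\|v_n\|_1^2=\int_\Omega|\Delta v_n|^2\,dx\to b,\qquad \int_\Omega|v_n|^{2^{**}}\,dx\to b,
\end{equation}
for some $b\ge 0$, where the matching of the two limits comes from $\langle I'(u_n),u_n\rangle\to 0$ and $I'(u)=0$; here the lower-order contributions from $\mu|\nabla v_n|^2$, $\lambda|v_n|^2$ and the logarithmic term all vanish in the limit because $v_n\to 0$ strongly in $L^2$ and in $L^p$ for subcritical $p$, and $\int v_n^2\log v_n^2\to 0$ by the vanishing of the $L^2$ mass. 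The definition of $S$ then forces $b\ge Sb^{\frac{N-4}{N}}$ (applied to $v_n$ after extending by zero to $\R^N$), so either $b=0$ or $b\ge S^{N/4}$.

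The heart of the argument is to exclude the case $b\ge S^{N/4}$ by the energy gap. Using $d=\lim I(u_n)=I(u)+\lim\big(\tfrac12\|v_n\|_1^2-\tfrac{1}{2^{**}}\int|v_n|^{2^{**}}\big)=I(u)+\big(\tfrac12-\tfrac1{2^{**}}\big)b=I(u)+\tfrac{2}{N}b$, and the fact that $I(u)\ge 0$ at a critical point (which follows from $\langle I'(u),u\rangle=0$ and the sign conditions $\tau\ge0$, $\lambda<\delta_1(\Omega)+\tau$ guaranteeing coercivity of the quadratic part via the equivalent norm of Lemma \ref{lem2.1}), if $b\ge S^{N/4}$ then $d\ge \tfrac{2}{N}S^{N/4}$, contradicting the hypothesis. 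Hence $b=0$, which gives $\|u_n-u\|_1\to 0$; by the norm equivalence of Lemma \ref{lem2.1} this is convergence in $H_0^2(\Omega)$, establishing the $(PS)_d$ condition. I expect the main obstacle to be verifying that $I(u)\ge 0$ at the limiting critical point, since the indefinite sign of $u^2\log u^2$ and the presence of the $\mu$ and $\lambda$ terms require careful use of the equivalent-norm coercivity together with $\tau\ge 0$; showing that the logarithmic energy $-\tfrac{\tau}{2}\int u^2(\log u^2-1)$ does not destroy the nonnegativity is the delicate point, and this is precisely where the hypothesis $\tau\ge 0$ (rather than $\tau<0$, treated separately elsewhere) is essential.
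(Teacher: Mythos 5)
Your proposal is correct and follows essentially the same route as the paper's proof: boundedness via Lemma \ref{lem2.4}, identification of the weak limit $u$ as a critical point using Lemma \ref{lem2.5} and the Br\'ezis--Lieb lemma, the dichotomy $b=0$ or $b\ge S^{\frac{N}{4}}$ from the definition of $S$, and exclusion of the second case by the energy identity $d=I(u)+\tfrac{2}{N}b$ together with $I(u)\ge 0$. One clarification: the step you flag as the delicate point is in fact immediate and uses no coercivity from Lemma \ref{lem2.1} --- substituting $\langle I'(u),u\rangle=0$ into $I(u)$ eliminates the quadratic part and makes the logarithmic integrals cancel exactly, leaving $I(u)=\tfrac{2}{N}\int_\Omega|u|^{2^{**}}\,dx+\tfrac{\tau}{2}\int_\Omega u^2\,dx\ge 0$ precisely because $\tau\ge 0$, which is the paper's computation \eqref{eqS2.7}.
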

	\begin{proof}
		We suppose that $\{u_{n}\}$ is a $(PS)_{d}$ sequence of the functional $I$. By Lemma \ref{lem2.4}, $\{u_{n}\}$ is bounded in  $H^2_{0}(\Omega)$. Therefore, there exists a subsequence of $\{u_n\}$ still denote by $\{u_n\}$, such that as $n \to +\infty$
		\begin{equation*}
			\left\{ \ds\begin{array}{ll} \ds u_n \rightharpoonup u \
				&\hbox{in} \ H^2_{0}(\Omega),  \vs{.1cm}\\
				u_n\rightarrow u \ &\hbox{in} \
				H_0^1(\Omega),  \vs{.1cm}\\
				u_n \rightarrow u \ &\hbox{a.e. in}\  \Omega.
			\end{array}
			\right.\h{1cm}
		\end{equation*}
		It follows from $\eqref{eqS2.2}$ that for each $\varphi\in C^{\infty}_{0}(\Omega)$,
		\[
		\langle I^{\prime}(u),\varphi \rangle=\lim\limits_{n\rightarrow \infty}\langle I^{\prime}(u_n),\varphi \rangle=0.
		\]
		Due to the arbitrariness of $\varphi$, we can see that $u$ is a weak solution to the equation
		\begin{gather*}
			\Delta^2 u=\mu \Delta u +\lambda u+\left|u\right|^{2^{**}-2}u+\tau u\log u^2,
		\end{gather*}
		which implies that
		\begin{equation}\label{eqS2.6}
			\ds \int(\left|\Delta u\right|^2+\mu |\nabla u|^2-\lambda u^2)-\ds \int |u|^{2^{**}}-\tau \ds \int u^2\log{u^2}=0,
		\end{equation}
		and
				\begin{align}
				I(u)\notag&=\ds\frac{1}{2} \ds \int(\left|\Delta u\right|^2+\mu |\nabla u|^2-\lambda u^2)-\frac{1}{2^{**}}\ds \int\left|u\right|^{2^{**}}-\frac{\tau}{2}\ds \int u^2(\log u^2-1)\\ \label{eqS2.7}
				&=\ds\frac{1}{2} \ds \int\left|u\right|^{2^{**}}+\frac{\tau}{2}\ds \int u^2\log u^2-\frac{1}{2^{**}}\ds \int\left|u\right|^{2^{**}}-\frac{\tau}{2}\ds \int u^2\log u^2+\frac{\tau}{2}\ds \int u^2\\ \notag
				&=\ds\frac{2}{N} \ds \int\left|u\right|^{2^{**}}+\frac{\tau}{2}\ds \int u^2\ge 0.
				\end{align}
		According to the definition of the $(PS)_{d}$ sequence, $Br\acute{e}zis-Lieb$ Lemma (see \cite{BL}) and (\ref{eqS2.5})-(\ref{eqS2.7}), we have
		\begin{equation}\label{eqS2.8}
			\begin{array}{ll}
				{o_n}(1)&=<I'({u_n}),u_n>\\[3mm]
				&=\ds \int( |\Delta u_n|^2+\mu |\nabla u_n|^2-\lambda |u_n|^2)-\ds \int|u_n|^{2^{**}}-\tau \ds \int {u_n}^2\log {u_n}^2\\[3mm]
				&=\ds \int\left|\Delta u\right|^2+\ds \int|\Delta (u_n-u)|^2+\mu \ds \int |\nabla u|^2-\lambda \ds \int u^2 \\[3mm]
				&-\ds \int\left|u\right|^{2^{**}}-\ds \int|u_n-u|^{2^{**}}-\tau\ds \int u^2\log u^2+{o_n}(1)\\[3mm]
				&=\ds \int|\Delta (u_n-u)|^2-\ds \int|u_n-u|^{2^{**}}+o_n(1),
			\end{array}
		\end{equation}
		and
		\begin{equation}\label{eqS2.9}
			\begin{array}{ll}
				d+{o_n}(1)&=I(u_n)\\[3mm]
				&=\ds\frac{1}{2}\ds \int( |\Delta u_n|^2+\mu |\nabla u_n|^2-\lambda |u_n|^2)-\frac{1}{2^{**}}\ds \int|u_n|^{2^{**}}\\[3mm]
				&-\ds\frac{\tau}{2} \ds \int {u_n}^2(\log {u_n}^2-1)\\[3mm]
				&=\ds\frac{1}{2}\ds \int\left|\Delta u\right|^2+\frac{1}{2}\ds \int (|\Delta (u_n-u)|^2+\mu  |\nabla u|^2-\lambda u^2)\\[3mm]
				&-\ds \frac{1}{2^{**}}\ds \int\left|u\right|^{2^{**}}-\ds\frac{1}{2^{**}}\ds \int|u_n-u|^{2^{**}}-\frac{\tau}{2}\ds \int u^2(\log u^2-1)+{o_n}(1)\\[3mm]
				&=\ds I(u)+\frac{1}{2}\ds \int|\Delta (u_n-u)|^2-\frac{1}{2^{**}} \ds \int|u_n-u|^{2^{**}}+o_n(1).
			\end{array}
		\end{equation}
		Setting $X_n=\ds \int |\Delta (u_n-u)|^2,~Y_n=\ds \int |(u_n-u)|^{2^{**}}$, it follows from $\eqref{eqS2.8}$ and $\eqref{eqS2.9}$ that
		\begin{equation}\label{eqS2.10}
			X_n-Y_n=o_n(1),
		\end{equation}
		and
		\begin{equation}\label{eqS2.11}
			\frac{1}{2}X_n-\frac{1}{2^{**}}Y_n=d+o_n(1)-I(u).
		\end{equation}
		Combining with $\eqref{eqS2.10}$ and $\eqref{eqS2.11}$, it is easy to see that $\{X_n\}$ and $\{Y_n\}$ are two bounded sequences. Thus there exist convergent subsequences still denoted by $\{X_n\}$ and $\{Y_n\}$. We may suppose that $X_n \rightarrow k$, as $n \rightarrow \infty$. Then it follows from $\eqref{eqS2.10}$ that $Y_n \rightarrow k$, as $n \rightarrow \infty$. According to the definition of $S$, we have
		\[
		|\Delta u|_2^2\ge S|u|^2_{2^{**}}, \forall u\in H^2(\R^N),
		\]
		and
		\[
		k+o_{n}(1)=X_n \ge S Y_n^{\frac{2}{2^{**}}}=Sk^\frac{N-4}{N}+o_{n}(1),
		\]
		which implies that if $k>0$, then $k\ge S^\frac{N}{4}$. So we have
		\[
		I(u)=d-(\frac{k}{2}-\frac{k}{2^{**}})=d-\frac{2}{N}k\le d-\frac{2}{N}S^{\frac{N}{4}}<0,
		\]
		which contradicts $\eqref{eqS2.7}$. Thus $k=0$, which implies that as $n \to \infty$
		\[
		X_n=\ds \int |\Delta(u_n-u)|^2\to 0.
		\]
		So $u_{n}$ strongly converges to $u$ in $H^2_{0}(\Omega)$.	
	\end{proof}
	
	\begin{lemma}\label{lem2.7}
		Assume that $N\ge 5$, $\mu>\eta_{\lambda,\tau}$, $\lambda<\delta_1(\Omega)
		$ and $\tau<0$. If $\{u_n\}$ is a $(PS)_d$ sequence of $I$ and $d\in (-\infty,0) \cup (0,\frac{2}{N} S^{\frac{N}{4}})$, then there exists a $u\in H^2_0(\Omega )\setminus\{0\}$ such that $u_n \rightharpoonup u$ weakly in $H^2_0(\Omega)$ and $u$ is a nontrivial weak solution of $\eqref{eqS1.1}$.
	\end{lemma}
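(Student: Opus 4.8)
The plan is to run the standard concentration--compactness scheme, but with a deliberately weaker target than in Lemma \ref{lem2.6}: since for $\tau<0$ we can no longer guarantee the sign condition $I(u)\ge0$ that powered the strong-convergence argument there (in the Br\'ezis--Lieb splitting the term $\frac{\tau}{2}\int u^2$ now works against us), I would not try to recover the full $(PS)_d$ condition. Instead I would only show that the weak limit is nonzero and solves $\eqref{eqS1.1}$ weakly. First, by Lemma \ref{lem2.4} the sequence $\{u_n\}$ is bounded in $H_0^2(\Omega)$, so along a subsequence $u_n\rightharpoonup u$ weakly in $H_0^2(\Omega)$; by the compact embedding $H_0^2(\Omega)\hookrightarrow H_0^1(\Omega)$ one also gets $u_n\to u$ strongly in $H_0^1(\Omega)$ and $u_n\to u$ a.e. in $\Omega$. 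Passing to the limit in $\langle I'(u_n),\varphi\rangle\to0$ for each fixed $\varphi\in C_0^\infty(\Omega)$ then shows that $u$ is a weak solution of $\eqref{eqS1.1}$; the only delicate term is $\tau\int u_n\varphi\log u_n^2$, which I would control by the a.e.\ convergence together with the integrability/convergence furnished by the argument of Lemma \ref{lem2.5}.

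It remains to prove $u\neq0$, which I would establish by contradiction: assume $u=0$. Then the strong $H_0^1$-convergence gives $\int|\nabla u_n|^2\to0$ and $\int u_n^2\to0$, while Lemma \ref{lem2.5} yields $\int u_n^2\log u_n^2\to\int u^2\log u^2=0$. Substituting these into $\langle I'(u_n),u_n\rangle=o_n(1)$ collapses that identity to
\[
\int|\Delta u_n|^2-\int|u_n|^{2^{**}}=o_n(1),
\]
and substituting them into $I(u_n)\to d$ collapses that to $\frac12\int|\Delta u_n|^2-\frac1{2^{**}}\int|u_n|^{2^{**}}\to d$. Writing $A_n=\int|\Delta u_n|^2$ and $B_n=\int|u_n|^{2^{**}}$, the first relation gives $A_n-B_n\to0$, and the second then gives $\frac{2}{N}A_n\to d$, so along a further subsequence $A_n\to a:=\frac N2 d$ and $B_n\to a$ with $a\ge0$.

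The contradiction now comes from the Sobolev inequality $A_n\ge S\,B_n^{2/2^{**}}$, which in the limit reads $a\ge S\,a^{2/2^{**}}$, forcing either $a=0$ or $a\ge S^{N/4}$ (using $1-\tfrac{2}{2^{**}}=\tfrac4N$). If $a=0$ then $d=\frac2N a=0$; if $a\ge S^{N/4}$ then $d=\frac2N a\ge\frac2N S^{N/4}$; and if instead $d<0$ then $a=\frac N2 d<0$ contradicts $a\ge0$. Each alternative is incompatible with the hypothesis $d\in(-\infty,0)\cup(0,\frac2N S^{N/4})$, so $u=0$ is impossible and $u\in H_0^2(\Omega)\setminus\{0\}$ is a nontrivial weak solution. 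I expect the genuine obstacle to lie in the first paragraph rather than in this dichotomy: justifying the passage to the limit in the logarithmic nonlinearity, and the vanishing of $\int u_n^2\log u_n^2$ when $u=0$, requires the careful a.e.-convergence and integrability control of Lemma \ref{lem2.5} (splitting the integrals over $\{|u_n|\le1\}$ and $\{|u_n|>1\}$ and dominating the second piece by a power of $|u_n|$ that is uniformly integrable by the $H_0^2$-bound), whereas once the lower-order and logarithmic contributions are shown negligible the energy bookkeeping and the precise exclusion of the two bad regimes of $d$ are routine.
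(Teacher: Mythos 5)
Your proposal is correct and follows essentially the same route as the paper: boundedness via Lemma \ref{lem2.4}, passage to the limit to see the weak limit solves \eqref{eqS1.1}, then a contradiction under $u=0$ by reducing the $(PS)_d$ relations to $A_n-B_n\to0$ and $\frac{2}{N}A_n\to d$ and invoking the Sobolev inequality to force $d=0$ or $d\ge\frac{2}{N}S^{N/4}$. The only cosmetic difference is that the paper phrases the reduction through the Br\'ezis--Lieb splitting with $v_n=u_n-u$ (trivial here since $u=0$), whereas you work with $u_n$ directly; the dichotomy and the exclusion of both bad regimes of $d$ are identical.
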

	\begin{proof}
		Let $\{u_n\}$ be a $(PS)_d$ sequence of $I$,  By Lemma \ref{lem2.4}, we know that $\{u_n\}$ is
		bounded in $H^2_0(\Omega )$. Hence there exists $u\in H^2_0(\Omega )$ such that, up to a subsequence,
		\begin{equation*}
			\left\{ \ds\begin{array}{ll} \ds u_n \rightharpoonup u \
				&\hbox{in} \ H^2_{0}(\Omega),  \vs{.1cm}\\
				u_n\rightarrow u \ &\hbox{in} \
				H_0^1(\Omega),  \vs{.1cm}\\
				u_n \rightarrow u \ &\hbox{a.e. in}\  \Omega.
			\end{array}
			\right.\h{1cm}
		\end{equation*}
		Since $\langle I^{\prime}(u_n),\varphi \rangle \to 0$ as $n\to\infty$ for any $\varphi\in C^{\infty}_0(\Omega)$, $u$ is a weak solution to the equation
		\begin{equation*}
			\Delta^2 u=\mu \Delta u +\lambda u+\left|u\right|^{2^{**}-2}u+\tau u\log u^2.
		\end{equation*}
		Assume that $u=0$ and set $v_n=u_n-u$. Following the definition of $(PS)_d$ sequence and $Br\acute{e}zis-Lieb$ Lemma, we have
		$$\int|\Delta v_n|^2-\int|v_n|^{2^{**}}=o_n(1),$$
		and
		\begin{equation}\label{eqS2.12}
			\frac{1}{2}\int|\Delta v_n|^2-\frac{1}{2^{**}}\int|v_n|^{2^{**}}=d+o_n(1).
		\end{equation}
		If we let 
		$$\int|\Delta v_n|^2 \to k,~~as~n\to\infty,$$
		then
		$$\int|v_n|^{2^{**}} \to k,~~as~n\to\infty.$$
		It is easy to verify that $k>0$. Actually if $k=0$, then $\int|\Delta u_n|^2=\int|\Delta v_n|^2 \to 0$, which implies that $I(u_n)\to0$, contradicting to $d\ne0$. Going on as Lemma \ref{lem2.6}, we can see that $k\ge S^{\frac{N}{4}}$. So from $\eqref{eqS2.12}$, we obtain
		$$\frac{2}{N}S^{\frac{N}{4}}\le \frac{2}{N}k=(\frac{1}{2}-\frac{1}{2^{**}})k=d<\frac{2}{N}S^{\frac{N}{4}},$$
		which is a contradiction. So $u$ is a nontrivial weak solution of $\eqref{eqS1.1}$.
	\end{proof}

	\section{energy estimation}
	In this section, we estimate the energy level $d$, under some different assumptions on the parameters $\lambda,\mu,\tau$ and dimension $N$. According to Lemma \ref{lem2.6}, the variational functional $I$ satisfies the $(PS)_d$ condition provided $d< \frac{2}{N} S^{\frac{N}{4}}$. So, to get a mountain pass solution, we need to show
	$c_M< \frac{2}{N} S^{\frac{N}{4}}$, which, together with $c_M:=\underset{\gamma \in \Gamma }{\mathop{\inf }}\,\underset{t\in \left[ 0,1 \right]}{\mathop{\sup }}\,I(\gamma (t))\le \underset{t\ge 0}{\mathop{\sup }}\,I(tV_\varepsilon)$, implies that we need to prove that there is a function $V_\varepsilon\in H^2_0(\Omega)$ such that $\underset{t\ge 0}{\mathop{\sup }}\,I(tV_\varepsilon)<\frac{2}{N}S^\frac{N}{4}$ under some assumptions on $\lambda, \mu$ and $\tau$.
	
	For $\varepsilon>0$, $z\in\R^N$, we denote
	\begin{equation}\label{eqS3.1}
		{u_{\varepsilon,z} }(x)=\frac{{{[ N(N-4)({{N}^{2}}-4){{\varepsilon }^2} ]}^{\frac{(N-4)}{8}}}}{{{(\varepsilon +|x-z|^2)}^{\frac{(N-4)}{2}}}},
	\end{equation}
	and
	\begin{equation*}
		\begin{array}{ll}
			S=\ds \inf \{ \frac{|\Delta u|_{2}^{2}}{|u|^2_{2^{**}}}:u\in {H^2}({\R^N})\setminus\{0\} \}.
		\end{array}
	\end{equation*}
	Following the idea in \cite{EFJ,VL}, the set $\{u_{\varepsilon,z}:\varepsilon>0,z\in\R^N\}$ contains all positive solutions of
	$${\Delta ^2}u=u^{2^{**}-1} \quad in\quad \R^N.$$
	It is easy to verify that
	$$
	|\Delta {u_{\varepsilon,z} }|_2^2=|{u_{\varepsilon,z} }|^{2^{**}}_{2^{**}}=S^{\frac{N}{4}}.
	$$
	
	Without loss of generality, we may assume that $0\in \Omega$ and $0$ is the geometric center of $\Omega$, that is $\rho_{max}=dist(0,\partial \Omega)$. We take a cut-off function $\varphi \in C_{0}^{\infty}( \Omega,[ 0,1 ] )$ satisfying that $\varphi (x)=1$ for $0\le|x|\le \rho$, $\varphi (x)\in (0,1)$ for $\rho <|x|<2\rho $ and $\varphi (x)=0$ for $|x|\ge 2\rho $. Then we set
	$$
	{u_{\varepsilon} }(x) \triangleq {u_{\varepsilon,0} }(x)=\frac{{{[ N(N-4)({{N}^{2}}-4){{\varepsilon }^2} ]}^{\frac{(N-4)}{8}}}}{{{(\varepsilon +|x|^2)}^{\frac{(N-4)}{2}}}},
	$$
	and
	\begin{equation*}
		V _\varepsilon (x)=\varphi (x)  u_{\varepsilon} (x).
	\end{equation*}
	\begin{lemma}\label{lem3.1}
		We have that $V_\varepsilon$ satisfies the following estimates, as $\varepsilon\to0^+$:
		\begin{equation*}
			\begin{array}{ll}
				\ds \int_\Omega|\Delta V _\varepsilon|^2={S^\frac{N}{4}}+O(\varepsilon ^\frac{N-4}{2}),\quad\quad\quad for~N\ge5,
			\end{array}
		\end{equation*}
		\begin{equation*}
			\begin{array}{ll}
				\ds \int_\Omega|V _\varepsilon|^{2^{**}}=\left\{ \begin{matrix}
					\ds S^\frac{N}{4}+O({\varepsilon }^\frac{N}{2}),&for~N\ge8,\\[3mm]
					\ds S^\frac{N}{4}+O({\varepsilon }^\frac{N-4+\delta}{2}),&for~5\leq N\leq7,\\[3mm]
				\end{matrix} \right.
			\end{array}
		\end{equation*}
		and
		\begin{equation}\label{eqS3.2}
			\begin{array}{ll}
				\ds \int_\Omega|V _\varepsilon|^{2}=\left\{ \begin{matrix}
					\ds c_NK_2 \varepsilon^2+O(\varepsilon^\frac{N-4}{2}),&for~N>8, \\[3mm]
					\ds \frac{1}{2}c_8\omega_8\varepsilon^2 \log \frac{1}{\varepsilon}+O(\varepsilon^2),&for~N=8, \\[3mm]
					\ds O(\varepsilon^{\frac{N-4}{2}}),&for~5\leq N\leq7,\\[3mm]
				\end{matrix} \right.
			\end{array}
		\end{equation}
		where $c_N=(N(N-4)(N^2-4))^\frac{N-4}{4}$, $K_2=\ds \int_{\R^N}\frac{1}{(1+|y|^2)^{N-4}}dy$, $0<\delta<1$ and $\omega_N$ denotes the area of the unit sphere surface in $\R^N$.
	\end{lemma}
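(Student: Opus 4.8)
The plan is to treat the three integrals separately, but in each case using the same decomposition: split $\Omega$ into the ball $B_\rho$, where $\varphi\equiv1$ and hence $V_\varepsilon=u_\varepsilon$, and the fixed annulus $\rho<|x|<2\rho$ where the cut-off is active, and then compare the resulting ball integral against the corresponding integral over all of $\R^N$. The normalizing identities $|\Delta u_\varepsilon|_2^2=|u_\varepsilon|_{2^{**}}^{2^{**}}=S^{N/4}$ recorded just above the lemma supply the leading constant $S^{N/4}$, so the task reduces to controlling a tail integral over $\{|x|>\rho\}$ together with the error produced by the cut-off on the annulus. I would use throughout the pointwise bounds $u_\varepsilon(x)=O(\varepsilon^{(N-4)/4}|x|^{-(N-4)})$, $|\nabla u_\varepsilon(x)|=O(\varepsilon^{(N-4)/4}|x|^{-(N-3)})$ and $|\Delta u_\varepsilon(x)|=O(\varepsilon^{(N-4)/4}|x|^{-(N-2)})$, valid for $|x|$ bounded away from the origin, together with the fact that on the fixed annulus each of $u_\varepsilon,\nabla u_\varepsilon,\Delta u_\varepsilon$ is $O(\varepsilon^{(N-4)/4})$ uniformly.

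For the first estimate I would expand $\Delta V_\varepsilon=\varphi\,\Delta u_\varepsilon+2\nabla\varphi\cdot\nabla u_\varepsilon+u_\varepsilon\,\Delta\varphi$. On $B_\rho$ this reduces to $\Delta u_\varepsilon$, so $\int_{B_\rho}|\Delta V_\varepsilon|^2=\int_{\R^N}|\Delta u_\varepsilon|^2-\int_{|x|>\rho}|\Delta u_\varepsilon|^2=S^{N/4}-O(\varepsilon^{(N-4)/2})$, the tail being dominated by $\varepsilon^{(N-4)/2}\int_{|x|>\rho}|x|^{-2(N-2)}\,dx$, an integral that converges for every $N\ge5$. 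On the annulus every term in the expansion is $O(\varepsilon^{(N-4)/4})$ while $\nabla\varphi,\Delta\varphi$ are bounded, so that contribution is $O(\varepsilon^{(N-4)/2})$ as well, yielding $S^{N/4}+O(\varepsilon^{(N-4)/2})$. For the $L^{2^{**}}$ estimate I would write $\int_\Omega|V_\varepsilon|^{2^{**}}=\int_{\R^N}u_\varepsilon^{2^{**}}-\int_{|x|>\rho}(1-\varphi^{2^{**}})u_\varepsilon^{2^{**}}$; since $u_\varepsilon^{2^{**}}=C\varepsilon^{N/2}(\varepsilon+|x|^2)^{-N}$, the correction is bounded by $C\varepsilon^{N/2}\int_{|x|>\rho}|x|^{-2N}\,dx=O(\varepsilon^{N/2})$, giving the sharp $S^{N/4}+O(\varepsilon^{N/2})$ for all $N$. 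For $5\le N\le7$ this is in particular $O(\varepsilon^{(N-4+\delta)/2})$ for any $\delta\in(0,1)$, which is the (weaker) form stated.

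The $L^2$ estimate is where the dimensional splitting arises and is the core of the computation. After discarding the annular correction, which is again $O(\varepsilon^{(N-4)/2})$ by the uniform bound there, the main term is $\int_{B_\rho}u_\varepsilon^2=c_N\varepsilon^{(N-4)/2}\int_{B_\rho}(\varepsilon+|x|^2)^{-(N-4)}\,dx$, and I would rescale $x=\sqrt\varepsilon\,y$ to obtain $c_N\varepsilon^2\int_{B_{\rho/\sqrt\varepsilon}}(1+|y|^2)^{-(N-4)}\,dy$. The asymptotics of this last integral as $\varepsilon\to0^+$ are dictated by the decay $(1+|y|^2)^{-(N-4)}\sim|y|^{-2(N-4)}$: it converges to $K_2$ when $N>8$, diverges like $\omega_8\log(\rho/\sqrt\varepsilon)=\tfrac12\omega_8\log(1/\varepsilon)+O(1)$ when $N=8$, and diverges like a constant multiple of $(\rho/\sqrt\varepsilon)^{8-N}$ when $5\le N\le7$. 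Multiplying back by $c_N\varepsilon^2$ gives, respectively, $c_NK_2\varepsilon^2+O(\varepsilon^{(N-4)/2})$, where the tail $\int_{|y|>\rho/\sqrt\varepsilon}$ produces the error term; $\tfrac12c_8\omega_8\varepsilon^2\log(1/\varepsilon)+O(\varepsilon^2)$; and $O(\varepsilon^{(N-4)/2})$, matching the three cases exactly.

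The hard part will be the bookkeeping in the $L^2$ estimate: extracting the precise constants $c_NK_2$ and $\tfrac12c_8\omega_8$ and identifying the correct error exponent in each of the three regimes, the logarithmic case $N=8$ requiring the most care since the leading term itself comes from the borderline divergence of $\int(1+|y|^2)^{-4}\,dy$ in $\R^8$. A secondary point is to verify that the mixed terms $2\nabla\varphi\cdot\nabla u_\varepsilon$ and $u_\varepsilon\,\Delta\varphi$ in the expansion of $\Delta V_\varepsilon$, which are supported only on the bounded annulus, do not degrade the sharp error $O(\varepsilon^{(N-4)/2})$ in the first estimate; this follows from the uniform annular bounds above, but it must be checked term by term.
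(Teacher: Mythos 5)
Your proposal is correct. Note, however, that the paper itself does not prove Lemma \ref{lem3.1} at all: its ``proof'' is a citation to \cite{GDW} and \cite{HL}, so your write-up is a self-contained version of the standard argument that those references carry out, namely the decomposition into the core $B_\rho$ (where $V_\varepsilon=u_\varepsilon$) plus the fixed annulus, the rescaling $x=\sqrt{\varepsilon}\,y$, and tail estimates driven by the decay of $(1+|y|^2)^{-(N-4)}$. The exponent bookkeeping in your $L^2$ computation checks out: the prefactor is $c_N\varepsilon^{(N-4)/2}\cdot\varepsilon^{N/2}\cdot\varepsilon^{-(N-4)}=c_N\varepsilon^2$, and in the convergent regime $N>8$ the tail $\int_{|y|>\rho/\sqrt{\varepsilon}}|y|^{-2(N-4)}dy=O(\varepsilon^{(N-8)/2})$ produces exactly the error $\varepsilon^{2+(N-8)/2}=\varepsilon^{(N-4)/2}$ claimed in \eqref{eqS3.2}, while the borderline case $N=8$ gives the advertised $\tfrac12 c_8\omega_8\varepsilon^2\log\frac1\varepsilon$ and the divergent case $5\le N\le 7$ gives $O(\varepsilon^{(N-4)/2})$. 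One point worth flagging: for the $2^{**}$ integral you obtain the error $O(\varepsilon^{N/2})$ in \emph{all} dimensions $N\ge5$, which is strictly sharper than the stated $O(\varepsilon^{(N-4+\delta)/2})$ for $5\le N\le 7$; since a smaller error trivially implies the stated big-$O$ bound, this is not a discrepancy but an improvement (the weaker form in the statement is presumably inherited from the form of the computation in the cited references), and nothing downstream in the paper uses more than the stated rate.
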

	\begin{proof}
		The proof can be found in \cite{GDW} and \cite{HL}. So we omit it here.
	\end{proof}
	\begin{lemma}\label{lem3.2}
		We have, as $\varepsilon\to0^+$
		\begin{equation*}
			\begin{array}{ll}
				\ds \int_\Omega|\nabla V_\varepsilon |^2=\left\{ \begin{matrix}
					C_N{K_1}\varepsilon +O({\varepsilon }^{\frac{N-4}{2}}),&for~N\ge8,\\[3mm]
					\ds 9c_7\omega_7\varepsilon^{\frac{3}{2}}\int_{0}^{2\rho}\varphi^2\frac{r^8}{(\varepsilon+r^2)^5}dr+O(\varepsilon^{\frac{3}{2}})\ge \ds \frac{9}{32}c_7\omega_7\varepsilon+O(\varepsilon^{\frac{3}{2}}),&for~N=7, \\[3mm]
					\ds 2c_6\omega_6\varepsilon\log\frac{1}{\varepsilon}+O(\varepsilon),&for~N=6,\\[3mm]
					\ds O(\varepsilon^{\frac{1}{2}}),&for~N=5,\\[3mm]
				\end{matrix} \right.
			\end{array}
		\end{equation*}
		where $C_N=c_N(N-4)^2$ and $K_1= \ds \int_{\R^N}\frac{|y|^2}{(1+|y|^2)^{N-2}}dy$. 
	\end{lemma}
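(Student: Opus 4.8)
The plan is to split $\nabla V_\varepsilon = u_\varepsilon\nabla\varphi + \varphi\nabla u_\varepsilon$ and expand
$$|\nabla V_\varepsilon|^2 = \varphi^2|\nabla u_\varepsilon|^2 + 2\varphi u_\varepsilon\,\nabla\varphi\cdot\nabla u_\varepsilon + u_\varepsilon^2|\nabla\varphi|^2,$$
so that the leading contribution comes from $\int_\Omega\varphi^2|\nabla u_\varepsilon|^2$ while the other two terms are treated as error. Since $\nabla\varphi$ is supported in the annulus $\{\rho\le|x|\le2\rho\}$, on which $\varepsilon+|x|^2\ge\rho^2$, a direct bound gives $u_\varepsilon=O(\varepsilon^{\frac{N-4}{4}})$ and $|\nabla u_\varepsilon|=O(\varepsilon^{\frac{N-4}{4}})$ there; integrating over this set of bounded measure shows that both the cross term and the $u_\varepsilon^2|\nabla\varphi|^2$ term are $O(\varepsilon^{\frac{N-4}{2}})$, which is exactly the error recorded in each case of the statement.

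For the main term I would compute directly that
$$|\nabla u_\varepsilon|^2 = C_N\,\varepsilon^{\frac{N-4}{2}}\,\frac{|x|^2}{(\varepsilon+|x|^2)^{N-2}},\qquad C_N=c_N(N-4)^2,$$
and then pass to polar coordinates (recall $\varphi$ is radial), reducing $\int_\Omega\varphi^2|\nabla u_\varepsilon|^2$ to
$$C_N\,\omega_N\,\varepsilon^{\frac{N-4}{2}}\int_0^{2\rho}\varphi^2\,\frac{r^{N+1}}{(\varepsilon+r^2)^{N-2}}\,dr.$$
The whole dimensional dichotomy is governed by the rescaled radial integral obtained from $r=\sqrt\varepsilon\,s$, namely $\varepsilon^{\frac{6-N}{2}}\int_0^{2\rho/\sqrt\varepsilon}\varphi^2(\sqrt\varepsilon s)\frac{s^{N+1}}{(1+s^2)^{N-2}}\,ds$, whose integrand decays like $s^{5-N}$ at infinity; thus convergence at $s=\infty$ holds exactly when $N>6$, with the borderline logarithmic case at $N=6$.

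For $N\ge8$ (and likewise $N=7$) the integrand is integrable at infinity, so the scaled integral converges to $\int_0^\infty\frac{s^{N+1}}{(1+s^2)^{N-2}}\,ds=K_1/\omega_N$; multiplying by $C_N\omega_N\varepsilon^{\frac{N-4}{2}}$ and controlling the truncation and the $\varphi^2\le1$ discrepancy by the convergent tail yields the clean leading term $C_NK_1\varepsilon$ for $N\ge8$. For $N=7$ I would keep the integral unevaluated and, for the lower bound, restrict to $\{|x|\le\rho\}$ where $\varphi\equiv1$ and use $\frac{s^8}{(1+s^2)^5}\ge\frac1{32s^2}$ for $s\ge1$; after the substitution $r=\sqrt\varepsilon s$ this produces $\frac{9}{32}c_7\omega_7\varepsilon+O(\varepsilon^{3/2})$. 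For $N=6$ the integrand $\frac{s^7}{(1+s^2)^4}\sim1/s$ forces a logarithmic divergence, so $\int_0^{2\rho/\sqrt\varepsilon}(\cdots)\,ds=\tfrac12\log\frac1\varepsilon+O(1)$ and the prefactor $4c_6\omega_6\varepsilon$ gives $2c_6\omega_6\varepsilon\log\frac1\varepsilon+O(\varepsilon)$. For $N=5$ the integrand $\frac{r^6}{(\varepsilon+r^2)^3}$ is bounded by $1$ and converges by dominated convergence to a finite constant, so the radial integral is $O(1)$ and the whole quantity is $O(\varepsilon^{1/2})$.

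The main obstacle is the bookkeeping at the borderline dimension $N=6$, where the scaled integral ceases to converge at infinity and instead produces the logarithmic factor: pinning down that $N=6$ is precisely the threshold (from $5-N=-1$) and extracting the correct coefficient $\tfrac12$ for the $\log\frac1\varepsilon$ term, while simultaneously absorbing the $O(\varepsilon^{\frac{N-4}{2}})=O(\varepsilon)$ annulus errors, is the most delicate point. The $N=7$ estimate also requires some care, since I would deliberately produce the convenient constant $\frac{9}{32}$ via the elementary bound above rather than the exact (and less usable) value of $\int_0^\infty\frac{s^8}{(1+s^2)^5}\,ds$.
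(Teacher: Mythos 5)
Your proposal is correct and follows essentially the same route as the paper: the same decomposition of $\nabla V_\varepsilon = u_\varepsilon\nabla\varphi+\varphi\nabla u_\varepsilon$ with the cross and $u_\varepsilon^2|\nabla\varphi|^2$ terms absorbed into $O(\varepsilon^{\frac{N-4}{2}})$ annulus errors, the same reduction to the radial integral $C_N\omega_N\varepsilon^{\frac{N-4}{2}}\int\varphi^2\frac{r^{N+1}}{(\varepsilon+r^2)^{N-2}}dr$, and the same dimension-by-dimension treatment, including the identical bound $\frac{s^8}{(1+s^2)^5}\ge\frac{1}{32s^2}$ for $s\ge1$ that produces the constant $\frac{9}{32}$ when $N=7$ and the boundedness of $\frac{r^6}{(\varepsilon+r^2)^3}$ when $N=5$. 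The only differences are cosmetic: the paper simply cites \cite{HL} for the case $N\ge8$ (which you instead prove directly via the convergent rescaled integral, correctly identifying $\int_0^\infty\frac{s^{N+1}}{(1+s^2)^{N-2}}ds=K_1/\omega_N$), and it evaluates the $N=6$ integral by explicit antiderivatives rather than by your rescaled-integral asymptotics, both yielding $2c_6\omega_6\varepsilon\log\frac{1}{\varepsilon}+O(\varepsilon)$.
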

	\begin{proof}
		The case of $N\ge8$ has been proved in \cite{HL}. Thus we only prove the case of $5\leq N\leq7$. Followin the definition of $V_\varepsilon$, we have
		\begin{equation*}
			\begin{array}{ll}
				\ds\int_ \Omega  |\nabla V_\varepsilon |^2 dx& =\ds\int_\Omega |u_\varepsilon \nabla \varphi +\varphi \nabla u_\varepsilon |^2 dx\\[3mm]
				&=\ds\int_\Omega \big( (\nabla \varphi )^2  {u_\varepsilon }^2 +2\varphi  u_\varepsilon  \nabla\varphi  \nabla u_\varepsilon  +(\nabla u_\varepsilon )^2 {\varphi ^2} \big)dx \\[3mm]
				
				
				&=\ds c_N  \omega _N  \varepsilon^\frac{N-4}{2} (N-4)^2 \ds\int_0^{\rho}\varphi ^2 \frac {r^{N+1}} {(\varepsilon + r^2)^{N-2}}dr +O(\varepsilon^{\frac{N-4}{2}}),
			\end{array}
		\end{equation*}
		where we have used the facts that 
		$$\int_{\Omega\setminus\ B_\rho(0)}(\nabla \varphi )^2  {u_\varepsilon }^2dx\le C\varepsilon^{\frac{N-4}{2}},$$
		and
		$$\int_{\Omega\setminus\ B_\rho(0)}2\varphi  u_\varepsilon  \nabla\varphi  \nabla u_\varepsilon dx\le C\varepsilon^{\frac{N-4}{2}}.$$
		
		For $N=7$, we get that, as $\varepsilon\to0^+$
		\begin{align*}
				\ds\int_ \Omega  |\nabla V_\varepsilon |^2 dx
				&=\ds 9c_7\omega_7\varepsilon^{\frac{3}{2}}\int_{0}^{2\rho}\varphi^2\frac{r^8}{(\varepsilon+r^2)^5}dr+O(\varepsilon^{\frac{3}{2}})\\
				&=\ds 9c_7\omega_7\varepsilon^{\frac{3}{2}}\int_{0}^{\rho}\frac{r^8}{(\varepsilon+r^2)^5}dr+O(\varepsilon^{\frac{3}{2}})\\				
				&\ge \ds 9c_7\omega_7\varepsilon^{\frac{3}{2}}\cdot\frac{1}{\sqrt{\varepsilon}}\int_{1}^{\frac{\rho}{\sqrt{\varepsilon}}}\frac{r^8}{(1+r^2)^5}dr+O(\varepsilon^{\frac{3}{2}})\\
				&\ge \ds -\frac{9}{32}c_7\omega_7\varepsilon\cdot\frac{1}{r}{\Big\arrowvert}^{\frac{\rho}{\sqrt{\varepsilon}}}_1+O(\varepsilon^{\frac{3}{2}})\\
				&=\ds \frac{9}{32}c_7\omega_7\varepsilon+O(\varepsilon^{\frac{3}{2}}).
			\end{align*}
		
		For $N=6$, we obtain that, as $\varepsilon\to0^+$
		\begin{equation*}
			\begin{array}{ll}
				\ds\int_ \Omega  |\nabla V_\varepsilon |^2 dx
				&=\ds 4c_6\omega_6\varepsilon \int_{0}^{\rho}\varphi^2\frac{r^7}{(\varepsilon+r^2)^4}dr+O(\varepsilon)\\[3mm]
				&=\ds 4c_6\omega_6\varepsilon\left(\int_0^{\rho} \frac{r^7}{(\varepsilon +r^2)^4}dr+\int_0^{\rho}\frac{\varphi^2 -1}{r}(1-\frac{\varepsilon^4 +4\varepsilon^3 r^2+6\varepsilon^2 r^4+4\varepsilon r^6 }{(\varepsilon +r^2)^4})dr\right)+O(\varepsilon)\\[3mm]
				
				&=\ds 4c_6\omega_6\varepsilon \int_{0}^{\rho}\frac{r^7}{(\varepsilon+r^2)^4}dr+O(\varepsilon)\\[3mm]
				&\ds=2c_6\omega_6\varepsilon\int_0^{\rho^2}{r^3}d( -\frac{1}{3} (\varepsilon +r)^{-3} )+O(\varepsilon)\\
				&\ds=2c_6\omega_6\varepsilon\int_0^{\rho^2} r^2d( -\frac{1}{2} (\varepsilon +r)^{-2} ) +O(\varepsilon)\\[2mm]
				
				&\ds=2c_6\omega_6\varepsilon\int_0^{\rho^2} r d ( -(1+\varepsilon )^{-1} )+O(\varepsilon) \\[2mm]
				&=\ds2c_6\omega_6\varepsilon \log \frac{1}{\varepsilon} +O(\varepsilon).
			\end{array}
		\end{equation*}
		
		For $N=5$, we have that, as $\varepsilon\to0^+$
		\begin{equation*}
			\begin{array}{ll}
				\ds\int_ \Omega  |\nabla V_\varepsilon |^2 dx
				&=\ds c_5\omega_5\varepsilon^{\frac{1}{2}} \int_{0}^{\rho}\varphi^2\frac{r^6}{(\varepsilon+r^2)^3}dr+O(\varepsilon^{\frac{1}{2}})\\[3mm]
				&=\ds c_5\omega_5\varepsilon^{\frac{1}{2}}\int_0^{\rho}  \varphi^2 \left( 1-\frac{1}{(\varepsilon +r^2)^3} (3r^4 \varepsilon +3r^2\varepsilon^2 +\varepsilon^3) \right)dr+O(\varepsilon^{\frac{1}{2}})\\[3mm]
				&=\ds O(\varepsilon^{\frac{1}{2}}).
			\end{array}
		\end{equation*}
		%
	\end{proof}

	\textbf{The case of} $\mathbf{N> 8:}$
	
	\begin{lemma}\label{lem3.3}
		If $N>8$, then we have that, as $\varepsilon\to 0^+$:
		$$
		\ds \int_\Omega V_\varepsilon^2 \log V_\varepsilon ^2=\ds c_N \frac{N-4}{2} K_2 \varepsilon^2 \log \frac{1}{\varepsilon}+O(\varepsilon^2),
		$$
		where $c_N$ and $K_2$ have been given in Lemma \ref{lem3.1}.
	\end{lemma}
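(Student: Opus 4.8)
The plan is to reduce everything to the mass estimate $\int_\Omega V_\varepsilon^2 = c_N K_2\varepsilon^2 + O(\varepsilon^{\frac{N-4}{2}})$ supplied by Lemma \ref{lem3.1}, plus one extra weighted-logarithm integral that carries a second factor of $\log\varepsilon$. First I would write $V_\varepsilon^2\log V_\varepsilon^2 = \varphi^2 u_\varepsilon^2\bigl(\log\varphi^2 + \log u_\varepsilon^2\bigr)$. Since $\varphi\equiv1$ on $B_\rho(0)$ and $s\mapsto s^2\log s^2$ is bounded on $[0,1]$, while $u_\varepsilon^2\le C\varepsilon^{\frac{N-4}{2}}$ on the support of $\log\varphi^2$ (the annulus $\rho\le|x|\le2\rho$, where $\varepsilon+|x|^2\ge\rho^2$), the contribution $\int_\Omega\varphi^2 u_\varepsilon^2\log\varphi^2$ is $O(\varepsilon^{\frac{N-4}{2}})$. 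This is absorbed into $O(\varepsilon^2)$ precisely because $N>8$ forces $\tfrac{N-4}{2}>2$, so it may be discarded.

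It remains to handle $\int_\Omega\varphi^2 u_\varepsilon^2\log u_\varepsilon^2$. Using $u_\varepsilon^2 = c_N\varepsilon^{\frac{N-4}{2}}(\varepsilon+|x|^2)^{-(N-4)}$ I would expand
$$\log u_\varepsilon^2 = \log c_N + \frac{N-4}{2}\log\varepsilon - (N-4)\log(\varepsilon+|x|^2),$$
splitting the integral into three pieces. The first two are ($x$-independent, for fixed $\varepsilon$) constants times $\int_\Omega\varphi^2 u_\varepsilon^2 = \int_\Omega V_\varepsilon^2$, which Lemma \ref{lem3.1} evaluates: the $\log c_N$ piece is $O(\varepsilon^2)$ and the $\tfrac{N-4}{2}\log\varepsilon$ piece contributes $\tfrac{N-4}{2}c_N K_2\varepsilon^2\log\varepsilon + o(\varepsilon^2)$.

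For the last piece $\int_\Omega\varphi^2 u_\varepsilon^2\log(\varepsilon+|x|^2)$ I would rescale $x=\sqrt\varepsilon\,y$, so that $\varepsilon+|x|^2 = \varepsilon(1+|y|^2)$ and $\log(\varepsilon+|x|^2) = \log\varepsilon + \log(1+|y|^2)$. The $\log\varepsilon$ part again reproduces $c_N K_2\varepsilon^2\log\varepsilon$, whereas the $\log(1+|y|^2)$ part yields $c_N\varepsilon^2\int_{\R^N}\frac{\log(1+|y|^2)}{(1+|y|^2)^{N-4}}dy + o(\varepsilon^2)$, a genuine $O(\varepsilon^2)$ since this integral converges for $N>8$ (the algebraic decay $2(N-4)>N$ dominates the logarithm). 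The cut-off $\varphi$ and the tail $|y|>\rho/\sqrt\varepsilon$ contribute only $O(\varepsilon^{\frac{N-4}{2}})$ and $O(\varepsilon^{\frac{N-4}{2}}\log\tfrac1\varepsilon)$ respectively, both $o(\varepsilon^2)$.

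Collecting the two surviving $\log\varepsilon$ terms, their coefficient is $\tfrac{N-4}{2}-(N-4) = -\tfrac{N-4}{2}$, and rewriting $\log\varepsilon = -\log\tfrac1\varepsilon$ converts this into the advertised $+\,c_N\tfrac{N-4}{2}K_2\varepsilon^2\log\tfrac1\varepsilon$, with all remaining terms lumped into $O(\varepsilon^2)$. The delicate point I expect is the bookkeeping of the two independent factors of $\log\varepsilon$ — one from the explicit $\varepsilon^{\frac{N-4}{2}}$ prefactor of $u_\varepsilon^2$, the other emerging from rescaling $\log(\varepsilon+|x|^2)$ — together with checking that every tail and cut-off error sits at order $\varepsilon^{\frac{N-4}{2}}$, which is $o(\varepsilon^2)$ exactly in the regime $N>8$.
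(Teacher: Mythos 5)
Your proposal is correct --- every error term you flag does sit at $o(\varepsilon^2)$ in the regime $N>8$ --- and it reaches the expansion by a decomposition genuinely different from the paper's. The paper splits \emph{spatially}: it writes the integral as $I+II+III$, with $I$ the $\log\varphi^2$ contribution, $II$ the integral of $u_\varepsilon^2\log u_\varepsilon^2$ over $B_\rho(0)$ (where $\varphi\equiv1$), and $III$ the integral of $\varphi^2 u_\varepsilon^2\log u_\varepsilon^2$ over the annulus $\Omega\setminus B_\rho(0)$; it disposes of $I$ and $III$ by pointwise inequalities --- for $III$, the interpolation $|s\log s|\le C_1 s^{1-\delta}+C_2 s^{1+\delta}$ with $\delta$ chosen so that $\frac{N-4}{2}(1-\delta)\ge 2$ --- and then rescales $II$, where $u_\varepsilon^2$ becomes $c_N\varepsilon^{-\frac{N-4}{2}}(1+|y|^2)^{-(N-4)}$, so the whole coefficient $\frac{N-4}{2}c_N K_2$ of $\varepsilon^2\log\frac{1}{\varepsilon}$ appears in one stroke. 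You instead expand $\log u_\varepsilon^2$ \emph{algebraically} on all of $\Omega$, feed the two $x$-independent pieces into the mass estimate $\int_\Omega V_\varepsilon^2=c_N K_2\varepsilon^2+O(\varepsilon^{\frac{N-4}{2}})$ of Lemma~\ref{lem3.1}, and rescale only the $\log(\varepsilon+|x|^2)$ piece, so that the coefficient emerges from the cancellation $\frac{N-4}{2}-(N-4)=-\frac{N-4}{2}$ between two separate $\log\varepsilon$ ledgers. Your route avoids the $\delta$-interpolation inequality and any separate annulus estimate for the main term (and your bound $O(\varepsilon^{\frac{N-4}{2}})$ on the cut-off term is even sharper than the paper's $O(\varepsilon^2)$), at the cost of the double bookkeeping of $\log\varepsilon$ factors, which you carry out correctly; the paper's route keeps all the singular analysis inside a single integral over the ball, at the cost of the two auxiliary pointwise inequalities. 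Both proofs ultimately rest on the same three facts, each of which you verify: $\varepsilon^{\frac{N-4}{2}}\log\frac{1}{\varepsilon}=o(\varepsilon^2)$ precisely because $N>8$; the tail bound $\int_{|y|>\rho/\sqrt{\varepsilon}}(1+|y|^2)^{-(N-4)}\,dy=O(\varepsilon^{\frac{N-8}{2}})$; and the convergence of $\int_{\R^N}(1+|y|^2)^{-(N-4)}\log(1+|y|^2)\,dy$, which again requires exactly $2(N-4)>N$, i.e.\ $N>8$.
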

	\begin{proof}
		For $N>8$, we have
		$$
		\begin{array}{ll}
			\ds \int_\Omega V_\varepsilon^2 \log V_\varepsilon ^2&=\ds \int_\Omega \varphi^2 u_\varepsilon^2 \log \varphi^2 dx+ \ds \int_\Omega \varphi^2 u_\varepsilon^2 \log u_\varepsilon ^2 dx \\
			&=\ds \int_\Omega \varphi^2 u_\varepsilon^2 \log \varphi^2 dx+\ds \int_{B_\rho (0)}\varphi^2 u_\varepsilon^2 \log u_\varepsilon ^2dx+\ds \int_{\Omega \setminus\ B_\rho (0)}\varphi^2 u_\varepsilon^2 \log u_\varepsilon ^2dx\\
			&\triangleq I+II+III.
		\end{array}
		$$	
		Since $\left|s^2\log s^2\right| \le C$ for $0< s \le 1$, we have that
		$$
		|I|\le C \ds \int u_\varepsilon^2=O(\varepsilon^2).
		$$
		Due to $N>8$ we can find a $\ds \delta\in(0,\frac{1}{5})$ such that $\ds 	\frac{1}{2}(N-4)(1-\delta)\ge2$. So it follows from $\left|s\log s\right| \le C_1s^{1-\delta}+C_2 s^{1+\delta}$ for all $s>0$ that
\begin{align*}
			|III|&\le \ds \int_{\Omega \setminus{B_\rho (0)}}\left|u_\varepsilon^2 \log u_\varepsilon ^2\right|dx\\
			&\le C\ds \int_{\Omega \setminus\{B_\rho (0)\}} \left(u_\varepsilon^{2(1-\delta)}+u_\varepsilon^{2(1+\delta)}\right)dx\\
			&\le \ds C|\Omega|\left(\varepsilon^{\frac{N-4}{2}(1-\delta)}+\varepsilon^{\frac{N-4}{2}(1+\delta)}\right)\\
			&=\ds O(\varepsilon^2),
\end{align*}
		and
		$$
		\begin{array}{ll}
			II&=\ds \int_{B(0,\rho)}u_\varepsilon^2 \log u_\varepsilon ^2 dx\\[4mm]
			&\ds =c_N\frac{N-4}{2} \varepsilon^2 \log{\frac{1}{\varepsilon}}\int_{B(0, \frac{\rho}{\sqrt{\varepsilon}}) } \frac{1}{(1+|y|^2)^{N-4}}dy +\varepsilon^2 \ds \int_{B(0, \frac{\rho}{\sqrt{\varepsilon}}) }  \frac{c_N}{(1+|y|^2)^{N-4}}
			\log {\frac{c_N}{(1+|y|^2)^{N-4}}}dy\\[4mm]
			&=\ds c_N\frac{N-4}{2} \varepsilon^2 \log{\frac{1}{\varepsilon}}\int_{B(0, \frac{\rho}{\sqrt{\varepsilon}}) } \frac{1}{(1+|y|^2)^{N-4}}dy +\varepsilon^2 c_N \log c_N\int_{B(0, \frac{\rho}{\sqrt{\varepsilon}}) } \frac{1}{(1+|y|^2)^{N-4}}dy\\[4mm]
			&+\ds \varepsilon^2 c_N\ds \int_{B(0, \frac{\rho}{\sqrt{\varepsilon}}) }  \frac{1}{(1+|y|^2)^{N-4}}  \log{\frac{1}{(1+|y|^2)^{N-4}}}dy\\[4mm]
			&=\ds c_N \cdot K_2\cdot \frac{N-4}{2} \varepsilon^2 \log{\frac{1}{\varepsilon}} +O(\varepsilon^2)+\ds \varepsilon^2 c_N O\left(\ds \int_{\mathbb{R} }  \frac{1}{(1+|y|^2)^{N-4-\frac{1}{3}}}dy\right)\\[3mm]
			&=\ds c_N \cdot K_2\cdot \frac{N-4}{2} \varepsilon^2 \log{\frac{1}{\varepsilon}} +O(\varepsilon^2),
			\\[3mm]
		\end{array}
		$$
		where we use the following fact that
		$$
		\ds \int_{B^c(0, \frac{\rho}{\sqrt{\varepsilon}}) } \frac{1}{(1+|y|^2)^{N-4}}dy=O(\varepsilon ^{\frac{N-8}{2}}).
		$$
		Hence 
		$$
		\ds \int_\Omega V_\varepsilon ^2 \log {V_\varepsilon^2}=c_N K_2\frac{N-4}{2} \varepsilon^2 \log{\frac{1}{\varepsilon}}+O(\varepsilon^2).
		$$
		%
	\end{proof}
	\begin{lemma}\label{lem3.4}
		Assume that $N>8$. If $\mu<0$, $\tau\in\mathbb{R}$ or $\mu=0$,  $\tau>0$, then we have
		\[
		\underset{t\ge 0}{\mathop{\sup }}\,I(tV_\varepsilon)<\frac{2}{N}S^\frac{N}{4}.
		\]
	\end{lemma}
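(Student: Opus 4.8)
The plan is to reduce the estimate to a comparison between the explicit maximum of the ``critical part'' of $I(tV_\varepsilon)$ and a sign-definite lower-order correction, coming either from $\mu|\nabla V_\varepsilon|_2^2$ or from the logarithmic term, using Lemmas \ref{lem3.1}--\ref{lem3.3}. First I would write, for $t\ge0$,
$$
I(tV_\varepsilon)=g_\varepsilon(t)-\frac{\tau}{2}t^2\log t^2\int_\Omega V_\varepsilon^2-\frac{\tau}{2}t^2\int_\Omega V_\varepsilon^2(\log V_\varepsilon^2-1),
$$
where $g_\varepsilon(t):=\frac{t^2}{2}A_\varepsilon-\frac{t^{2^{**}}}{2^{**}}|V_\varepsilon|_{2^{**}}^{2^{**}}$ and $A_\varepsilon:=\int_\Omega(|\Delta V_\varepsilon|^2+\mu|\nabla V_\varepsilon|^2-\lambda V_\varepsilon^2)$. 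The one-variable function $g_\varepsilon$ is maximized at $t_0=(A_\varepsilon/|V_\varepsilon|_{2^{**}}^{2^{**}})^{(N-4)/8}$, with
$$
\max_{t\ge0}g_\varepsilon(t)=\frac{2}{N}\,\frac{A_\varepsilon^{N/4}}{\big(|V_\varepsilon|_{2^{**}}^{2^{**}}\big)^{(N-4)/4}}.
$$

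Next I would insert the asymptotics of Lemmas \ref{lem3.1}--\ref{lem3.2}. Since $N>8$ forces $\frac{N-4}{2}>2$, the errors $O(\varepsilon^{(N-4)/2})$ in $\int|\Delta V_\varepsilon|^2$ and $O(\varepsilon^{N/2})$ in $|V_\varepsilon|_{2^{**}}^{2^{**}}$ are $o(\varepsilon^2)$, while $\int|\nabla V_\varepsilon|^2=C_NK_1\varepsilon+o(\varepsilon^2)$ and $\int V_\varepsilon^2=c_NK_2\varepsilon^2+o(\varepsilon^2)$. A first-order expansion of the quotient above then gives $\max_{t\ge0}g_\varepsilon(t)=\frac{2}{N}S^{N/4}+\frac{\mu}{2}C_NK_1\varepsilon+O(\varepsilon^2)$ when $\mu<0$, and $\max_{t\ge0}g_\varepsilon(t)=\frac{2}{N}S^{N/4}-\frac{\lambda}{2}c_NK_2\varepsilon^2+o(\varepsilon^2)$ when $\mu=0$. \textbf{When $\mu<0$} the leading correction $\frac{\mu}{2}C_NK_1\varepsilon<0$ is of order $\varepsilon$ and already beats every remaining term. \textbf{When $\mu=0,\ \tau>0$} the quotient only yields an $O(\varepsilon^2)$ correction of indefinite sign, so the decisive gain must come from the logarithmic term: by Lemmas \ref{lem3.3} and \ref{lem3.1},
$$
\int_\Omega V_\varepsilon^2(\log V_\varepsilon^2-1)=c_N\frac{N-4}{2}K_2\,\varepsilon^2\log\frac1\varepsilon+O(\varepsilon^2)>0,
$$
so $-\frac{\tau}{2}t^2\int_\Omega V_\varepsilon^2(\log V_\varepsilon^2-1)$ contributes a quantity of order $-\varepsilon^2\log\frac1\varepsilon$, which dominates all the $O(\varepsilon^2)$ errors.

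The main obstacle is that the logarithmic perturbation is not a pure power of $t$, so one cannot simply add the suprema over $t\ge0$: for $\tau<0$ the term $-\frac{\tau}{2}t^2\log t^2\int_\Omega V_\varepsilon^2$ grows like $t^2\log t^2$ as $t\to\infty$. To handle this I would confine the supremum to a fixed compact interval. Precisely, I would fix $0<t_1<t_2<\infty$ and show, uniformly in small $\varepsilon$, that (i) for $t\le t_1$ one has $I(tV_\varepsilon)\le t^2S^{N/4}+o(1)<\frac{2}{N}S^{N/4}$ once $t_1<1/\sqrt N$, because $\int_\Omega V_\varepsilon^2\to0$ and $|t^2\log t^2|$ is bounded on $[0,1]$, killing the logarithmic terms; and (ii) for $t\ge t_2$ the term $-\frac{t^{2^{**}}}{2^{**}}|V_\varepsilon|_{2^{**}}^{2^{**}}$ dominates the at-most-$t^2\log t^2$ growth of the other terms (since $|V_\varepsilon|_{2^{**}}^{2^{**}}\to S^{N/4}>0$ and $\int_\Omega V_\varepsilon^2\to0$), giving $I(tV_\varepsilon)<0$. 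On the compact range $[t_1,t_2]$ the factors $t^2$ and $t^2\log t^2$ are bounded, so $-\frac{\tau}{2}t^2\log t^2\int_\Omega V_\varepsilon^2=O(\varepsilon^2)$, and the remaining logarithmic term is controlled by its sign-definite leading part.

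Combining $g_\varepsilon(t)\le\max_{t\ge0}g_\varepsilon$ with these bounds then yields, on $[t_1,t_2]$,
$$
I(tV_\varepsilon)\le\frac{2}{N}S^{N/4}+\frac{\mu}{2}C_NK_1\varepsilon+O\!\left(\varepsilon^2\log\tfrac1\varepsilon\right)\quad(\mu<0),
$$
and
$$
I(tV_\varepsilon)\le\frac{2}{N}S^{N/4}-\frac{\tau}{2}t_1^2\,c_N\frac{N-4}{2}K_2\,\varepsilon^2\log\tfrac1\varepsilon+O(\varepsilon^2)\quad(\mu=0,\ \tau>0).
$$
In each case the displayed negative correction (of order $\varepsilon$, respectively $\varepsilon^2\log\frac1\varepsilon$) dominates the error term as $\varepsilon\to0^+$. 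Together with the estimates in ranges (i) and (ii), this gives $\sup_{t\ge0}I(tV_\varepsilon)<\frac{2}{N}S^{N/4}$ for all sufficiently small $\varepsilon>0$, which is the asserted inequality.
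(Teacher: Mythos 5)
Your proof is correct, and it handles the supremum over $t$ by a genuinely different mechanism than the paper. The paper works with the exact maximizer $t_\varepsilon$ of $t\mapsto I(tV_\varepsilon)$: from the stationarity equation it asserts $t_\varepsilon^{2^{**}-2}\to 1$, hence $\tau\log t_\varepsilon^2\int_\Omega V_\varepsilon^2=o(\varepsilon^2)$ and $t_\varepsilon^2$ stays bounded away from $0$; it then bounds $\bigl(\frac{t_\varepsilon^2}{2}-\frac{t_\varepsilon^{2^{**}}}{2^{**}}\bigr)S^{\frac{N}{4}}\le\frac{2}{N}S^{\frac{N}{4}}$ and adds the $\mu$, $\lambda$, $\tau$ corrections separately, arriving at exactly your final comparison: the $\mu\,\varepsilon$ term beats $O(\varepsilon^2\log\frac1\varepsilon)$ when $\mu<0$, and the $-\tau\,\varepsilon^2\log\frac1\varepsilon$ term beats $O(\varepsilon^2)$ when $\mu=0$, $\tau>0$. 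You instead absorb the $\mu$ and $\lambda$ terms into the pure power functional $g_\varepsilon$, maximize it in closed form as $\frac{2}{N}A_\varepsilon^{N/4}\big/\bigl(|V_\varepsilon|_{2^{**}}^{2^{**}}\bigr)^{(N-4)/4}$, and tame the non-power logarithmic terms by the three-region splitting $[0,t_1]\cup[t_1,t_2]\cup[t_2,\infty)$. Your route is longer but more watertight on precisely the point the paper glosses over: the paper's formula for $t_\varepsilon^{2^{**}-2}$ contains $\tau\log t_\varepsilon^2$ on its own right-hand side, so concluding $t_\varepsilon\to1$ (and that the negative corrections, which carry a factor $t_\varepsilon^2$, do not degenerate) really requires an a priori confinement of $t_\varepsilon$ to a compact subinterval of $(0,\infty)$ --- which is exactly what your regions (i) and (ii) supply. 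What the paper's approach buys is brevity and the asymptotics of the maximizer itself; what yours buys is that no stationarity equation, and no implicit a priori bound on $t_\varepsilon$, is ever needed.
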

	\begin{proof}
		By Lemma \ref{lem2.2}, $I(0)=0$ and $\underset{t\to +\infty }{\mathop{\lim }}\,~I(t V _\varepsilon )=-\infty$, we can choose $ t_\varepsilon \in (0,+\infty)$ such that
		\begin{equation*}
			I(t_\varepsilon V_\varepsilon )=\underset{t\ge 0}{\mathop{\sup }}\,I(tV_\varepsilon),
		\end{equation*}
		and
		$$
		\ds \int(\left|\Delta V_\varepsilon\right|^2+\mu |\nabla V_\varepsilon|^2-\lambda |V_\varepsilon|^2)-\ds t_\varepsilon^{2^{**}-2}\int |V_\varepsilon|^{2^{**}}-\tau \log t_\varepsilon^2\int V_\varepsilon^2-\tau \ds \int V_\varepsilon^2\log{V_\varepsilon^2}=0,
		$$
		which implies that, as $\varepsilon \to 0^+$
		\begin{equation*}
			\begin{array}{ll}
				t_\varepsilon^{2^{**}-2}&=\frac{\ds \int(\left|\Delta V_\varepsilon\right|^2+\mu |\nabla V_\varepsilon|^2-\lambda |V_\varepsilon|^2)-\tau \log t_\varepsilon^2\int V_\varepsilon^2-\tau \ds \int V_\varepsilon^2\log{V_\varepsilon^2}}{\ds \int |V_\varepsilon|^{2^{**}}}\\[3mm]
				&=\ds \frac{S^\frac{N}{4}+O(\varepsilon)}
				{S^\frac{N}{4}+O(\varepsilon^\frac{N}{2})}\to 1.\\[3mm]	
			\end{array}
		\end{equation*}
		Thus, together with Lemma \ref{lem3.1}, we can see that
		$$
		\tau \log t_{\varepsilon}^2\int|V_\varepsilon|^2=o(\varepsilon^2).
		$$
		
		When $N>8$, by Lemma \ref{lem3.1} and Lemma \ref{lem3.2}, we have that, if $\mu<0$, $\tau\in\mathbb{R}$ or $\mu=0$, $\tau>0$, then, as $\varepsilon \to 0^+$,
		\begin{equation*}
			\begin{array}{ll}
				I(t_\varepsilon V_\varepsilon )
				&=\ds \frac{t_\varepsilon^2}{2} \ds(|\Delta V_\varepsilon|^2_2 +\mu |\nabla V_\varepsilon|^2_2-\lambda |V_\varepsilon|^2_2) -\frac{t_\varepsilon^{ 2^{**} } } {2^{**}} |V_\varepsilon|^ {2^{**}}_{2^{**}}-\ds \frac{\tau}{2}t^2_\varepsilon(\log t^2_\varepsilon -1)|V_\varepsilon|^2_2-\frac{\tau}{2}t_\varepsilon^2 \ds \int_\Omega V_\varepsilon^2\log V^2_\varepsilon  \\[3mm]
				& =  \ds  (\frac{t_\varepsilon^2}{2}-\frac{t_\varepsilon^{ 2^{**} } } {2^{**}} ) S^\frac{N}{4} +O(\varepsilon ^\frac{N-4}{2})+\mu\frac{t_{\varepsilon }^{2}}{2}C_N K_1\varepsilon-\lambda\frac{t_{\varepsilon }^{2}}{2} c_N K_2 \varepsilon^2\\[3mm] 
				&+\ds o(\varepsilon^2)+\frac{\tau}{2}t_\varepsilon^2c_N K_2 \varepsilon^2 -\frac{\tau}{2}t^2_\varepsilon \left(c_N \frac{N-4}{2}K_2\varepsilon^2\log\frac{1}{\varepsilon}+O(\varepsilon^2)\right) \\[3mm]
				& \leq \ds \frac{2}{N} S^\frac{N}{4}+\mu\frac{t_{\varepsilon }^{2}}{2}C_N K_1\varepsilon-\tau\frac{t^2_\varepsilon}{2} c_N \frac{N-4}{2}K_2\varepsilon^2\log\frac{1}{\varepsilon}+O(\varepsilon^2)\\[3mm]
				&<\ds \frac{2}{N} S^\frac{N}{4}.
			\end{array}
		\end{equation*}
		%
	\end{proof}
	
	\textbf{The case of} $\mathbf{N=8:}$
	
	\begin{lemma}\label{lem3.5}	
		For $N=8$, we have that, as $\varepsilon\to0^+$
		$$
		\int_\Omega V_\varepsilon^2 \log V_\varepsilon ^2 \ge c_8\log\left(\frac{\sqrt{c_8}(\varepsilon+\rho^2)}{e^{\frac{11}{6}}(\varepsilon+4\rho^2)^2}\right) \omega_8\varepsilon^2 \log \frac{1}{\varepsilon} +O(\varepsilon^2),
		$$
		and
		$$
		\int_\Omega V_\varepsilon^2 \log V_\varepsilon ^2 \leq c_8\log\left(\frac{\sqrt{c_8}e^{\frac{25}{6}}(\varepsilon+4\rho^2)}{(\varepsilon+\rho^2)^2}\right) \omega_8\varepsilon^2 \log \frac{1}{\varepsilon} +O(\varepsilon^2),
		$$
		where $c_8$ and $\omega_8$ have been given in Lemma \ref{lem3.1}.
		
	\end{lemma}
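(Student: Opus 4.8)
The plan is to reduce the eight–dimensional integral to a one–dimensional radial one and then to track a delicate cancellation of the leading $\varepsilon^{2}(\log\frac1\varepsilon)^{2}$ terms. Since $N=8$ forces $2^{**}=4$, $\frac{N-4}{8}=\frac12$ and $\frac{N-4}{2}=2$, the bubble collapses to $u_\varepsilon(x)=\frac{\sqrt{c_8}\,\varepsilon}{(\varepsilon+|x|^{2})^{2}}$, so that $V_\varepsilon^{2}=\varphi^{2}u_\varepsilon^{2}$ with $u_\varepsilon^{2}=\frac{c_8\varepsilon^{2}}{(\varepsilon+|x|^{2})^{4}}$. I would first split $\log V_\varepsilon^{2}=\log\varphi^{2}+\log u_\varepsilon^{2}$. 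The piece $\int_\Omega\varphi^{2}u_\varepsilon^{2}\log\varphi^{2}$ is supported on the annulus $\rho\le|x|\le2\rho$, where $u_\varepsilon^{2}=O(\varepsilon^{2})$ and $\varphi^{2}\log\varphi^{2}$ is bounded, so it is $O(\varepsilon^{2})$ and is absorbed into the error. It then remains to estimate $\int_\Omega\varphi^{2}u_\varepsilon^{2}\log u_\varepsilon^{2}$.

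Writing $\log u_\varepsilon^{2}=\log c_8+2\log\varepsilon-4\log(\varepsilon+r^{2})$ and passing to polar coordinates, this equals $\omega_8 c_8\varepsilon^{2}\int_{0}^{2\rho}\varphi^{2}\frac{r^{7}}{(\varepsilon+r^{2})^{4}}\bigl(\log c_8+2\log\varepsilon-4\log(\varepsilon+r^{2})\bigr)\,dr$. On $[0,\rho]$, where $\varphi\equiv1$, the rescaling $r=\sqrt\varepsilon\,t$ turns the inner integral into $(\log c_8+2\log\frac1\varepsilon)g(R)-4h(R)$, with $R=\rho/\sqrt\varepsilon$, $g(R)=\int_0^R\frac{t^{7}}{(1+t^{2})^{4}}\,dt$ and $h(R)=\int_0^R\frac{t^{7}}{(1+t^{2})^{4}}\log(1+t^{2})\,dt$. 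A partial–fraction evaluation gives the explicit $g(R)=\frac12\log(1+R^{2})-\frac{11}{12}+o(1)$; this is where the constant $\frac{11}{6}$ of the statement originates, and it reproduces the $\frac12 c_8\omega_8\varepsilon^{2}\log\frac1\varepsilon$ size of $\int_\Omega|V_\varepsilon|^{2}$ recorded in $\eqref{eqS3.2}$.

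The crux is that, setting $L:=\log(1+R^{2})=\log\frac1\varepsilon+\log(\varepsilon+\rho^{2})$, the summand $2\log\frac1\varepsilon\,g(R)$ grows like $+L^{2}$ while $-4h(R)$ grows like $-L^{2}$, and these $(\log\frac1\varepsilon)^{2}$ contributions must cancel exactly; a naive pointwise bound $\log u_\varepsilon^{2}\le\log\frac{c_8\varepsilon^{2}}{(\varepsilon+\rho^{2})^{4}}$ throws the cancellation away and is off by a whole factor $\log\frac1\varepsilon$. To preserve it I would estimate $h(R)$ one order below its leading term by integrating by parts, $h(R)=g(R)L-\int_0^R g(t)\frac{2t}{1+t^{2}}\,dt$, together with the elementary inequalities $\frac12\log(1+t^{2})-\frac{11}{12}<g(t)\le\frac12\log(1+t^{2})$ read off from the closed form of $g$. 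These sandwich $h(R)$ between $\frac14 L^{2}-\frac{11}{12}L+o(1)$ and $\frac14 L^{2}+o(1)$, so that after the $L^{2}$ cancellation only an $O(L)$ term of definite, computable coefficient survives.

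Finally I would assemble the two–sided estimate using the sign of the annular part. For the upper bound the annulus integrand $\varphi^{2}\frac{r^{7}}{(\varepsilon+r^{2})^{4}}\log u_\varepsilon^{2}$ is nonpositive once $\varepsilon$ is small, so it may simply be discarded, leaving the $[0,\rho]$ contribution whose $L$–coefficient, expanded via $L=\log\frac1\varepsilon+\log(\varepsilon+\rho^{2})$, gives the coefficient $\frac12\log c_8+\frac{11}{6}-\log(\varepsilon+\rho^{2})$ of $\log\frac1\varepsilon$; folding $\frac{11}{6}$ into $e^{11/6}$ and $\frac12\log c_8$ into $\sqrt{c_8}$ yields, a fortiori, the claimed (slightly looser) upper estimate. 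For the lower bound I use $0\le\varphi^{2}\le1$ on $[\rho,2\rho]$, where $\log u_\varepsilon^{2}<0$, to replace $\varphi$ by $1$ and extend the integration to $R'=2\rho/\sqrt\varepsilon$; this is what brings the endpoint $\log(\varepsilon+4\rho^{2})$ into the coefficient, and the same $h$–sandwich then produces the stated lower estimate with remainder $O(\varepsilon^{2})$. I expect the genuine difficulty to be exactly this third step: organizing the $(\log\frac1\varepsilon)^{2}$ cancellation while retaining enough accuracy in $h(R)$ to pin down the $\log\frac1\varepsilon$–coefficient, since the borderline dimension $N=8$ is precisely where $K_2$ diverges and forces these logarithmic corrections.
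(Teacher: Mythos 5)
Your proposal is correct, and its quantitative claims all check out: the closed form $g(R)=\frac12\log(1+R^{2})-\frac{11}{12}+o(1)$, the elementary bounds $\frac12\log(1+t^{2})-\frac{11}{12}<g(t)\le\frac12\log(1+t^{2})$, the resulting sandwich $\frac14L^{2}-\frac{11}{12}L+o(1)\le h(R)\le\frac14L^{2}+o(1)$, and the sign argument on the annulus are all valid, and the constants you end with dominate the stated ones in the correct direction. The skeleton is the same as the paper's: discard $\varphi^{2}u_\varepsilon^{2}\log\varphi^{2}$ as $O(\varepsilon^{2})$, rescale, exploit the exact cancellation of the $\varepsilon^{2}(\log\frac1\varepsilon)^{2}$ contributions, and handle the cutoff by comparing $B_\rho$ with $B_{2\rho}$. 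But your organization of the key estimate genuinely differs. The paper, in \eqref{eqS3.5}--\eqref{eqS3.9}, splits the integrand into $I_1$ (the $2\log\frac1\varepsilon$ piece) and $I_2$ (the $-4\log(1+|y|^{2})$ piece) and bounds them independently, pairing mismatched regions ($I_1$ over $B_\rho$ together with $I_2$ over $B_{2\rho}$ for the lower bound, and conversely for the upper); this decoupling is precisely why the stated constants mix $\varepsilon+\rho^{2}$ with $(\varepsilon+4\rho^{2})^{2}$. It also evaluates the $\log$-weighted integral directly via the substitution $s=1+r^{2}$ and partial fractions, discarding sign-definite remainders. You instead keep the two logarithmic pieces coupled over a single region (all of $[0,2\rho]$ with $\varphi$ replaced by $1$ for the lower bound, $[0,\rho]$ for the upper) and control $h$ by integration by parts against $g$. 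The coupling preserves more of the cancellation and yields the strictly sharper coefficients $\log\bigl(\sqrt{c_8}\,e^{-11/6}(\varepsilon+4\rho^{2})^{-1}\bigr)$ for the lower bound and $\log\bigl(\sqrt{c_8}\,e^{11/6}(\varepsilon+\rho^{2})^{-1}\bigr)$ for the upper, which imply the lemma as stated because these coefficients multiply the positive factor $\varepsilon^{2}\log\frac1\varepsilon$ up to $O(\varepsilon^{2})$ errors. The paper's route is more mechanical but looser; for the only place the lemma is used (Lemma \ref{lem3.6}), where just the sign and the $O(1)$ size of the coefficient matter, both versions are equally serviceable.
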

	\begin{proof}
		For $N=8$, we deduce that
		\begin{equation}\label{eqS3.3}
			\begin{array}{ll}
				\ds \int_\Omega V_\varepsilon^2 \log V_\varepsilon ^2 &=\ds \int_\Omega \varphi^2u_\varepsilon^2\log(\varphi^2 u_{\varepsilon}^2)dx\\[3mm]
				&=c_8 \ds \int_\Omega \varphi ^2 \frac{\varepsilon^2}{(\varepsilon +|x|^2)^4} \log\left[c_8 \varphi ^2 \frac{\varepsilon^2}{(\varepsilon +|x|^2)^4}\right]dx\\[3mm]
				&=c_8 \ds \int_\Omega \varphi ^2 \frac{\varepsilon^2}{(\varepsilon +|x|^2)^4} \log \frac{\varepsilon^2}{(\varepsilon +|x|^2)^4}dx+c_8\log c_8\ds \int_\Omega \varphi ^2 \frac{\varepsilon^2}{(\varepsilon+|x|^2)^4}dx\\[3mm]
				&+c_8 \ds \int_{\Omega \setminus\ B_\rho (0)} \varphi ^2 \log \varphi ^2 \frac{\varepsilon^2}{(\varepsilon+|x|^2)^4}dx\\[3mm]
				&=\ds I+II+O(\varepsilon^2).
			\end{array}
		\end{equation}
		A straightforward computations give us
\begin{align}
				II\notag&= c_8 \log c_8 \ds \int_{B_\rho(0) }\frac{\varepsilon^2}{(\varepsilon +|x|^2)^4}dx+O(\varepsilon^2)\\ \notag
				&=c_8 (\log c_8) \varepsilon^2 \ds \int_{B(0,\frac{\rho}{\sqrt{\varepsilon}}) }\frac{1}{(1+|y|^2)^4}dy+O(\varepsilon^2)\\ \label{eqS3.4}
				&=c_8 (\log c_8) \omega_8\varepsilon^2 \ds \int^{\frac{\rho}{\sqrt{\varepsilon}}}_0 \frac{1}{(1+r^2)^4}r^7dr+O(\varepsilon^2)\\ \notag
				&=\ds \frac{1}{2}c_8 (\log c_8) \omega_8 \varepsilon^2 \int^{\frac{\rho}{\sqrt{\varepsilon}}}_0\left[ \frac{1}{1+r^2}-\frac{3}{(1+r^2)^2}+\frac{3}{(1+r^2)^3}-\frac{1}{(1+r^2)^4}\right] d(1+r^2)+O(\varepsilon^2)\\ \notag
				&=\ds \frac{1}{2}c_8 (\log c_8) \omega_8 \varepsilon^2 \log \frac{1}{\varepsilon}+O(\varepsilon^2),
\end{align}
		and
		\begin{equation}\label{eqS3.5}
			\begin{array}{ll}
				I&=c_8 \ds \int_\Omega \varphi^2 \frac{\varepsilon^2}{(\varepsilon +|x|^2)^4}\log \frac{\varepsilon^2}{(\varepsilon +|x|^2)^4}dx\\[3mm]
				&=c_8  \ds \int_{B_{2\rho}(0) }\varphi^2 \frac{\varepsilon^2}{(\varepsilon +|x|^2)^4}\log \frac{\varepsilon^2}{(\varepsilon +|x|^2)^4}dx\\[3mm]
				&\ds=c_8 \varepsilon^2 \ds \int_{B(0,\frac{2\rho}{\sqrt{\varepsilon}})} \varphi^2(\sqrt{\varepsilon }y)\frac{1}{(1+|y|^2)^4}\log\frac{1}{\varepsilon^2(1+|y|^2)^4}dy\\[3mm]
				&\ds =2c_8\varepsilon^2\log \frac{1}{\varepsilon}\ds \int_{B(0,\frac{2 \rho}{\sqrt{\varepsilon}}) }\varphi^2(\sqrt{\varepsilon }y)\frac{1}{(1+|y|^2)^4}dy\\[3mm]
				&\ds +c_8 \varepsilon^2 \ds \int_{B(0,\frac{2\rho}{\sqrt{\varepsilon}})} \varphi^2(\sqrt{\varepsilon }y)\frac{1}{(1+|y|^2)^4}\log \frac{1}{(1+|y|^2)^4}dy\\[3mm]
				&\ds =I_1+I_2,
			\end{array}
		\end{equation}
		where
		\begin{equation}\label{eqS3.6}
			\begin{array}{ll}
				I_1&\ge\ds 2c_8 \omega_8\varepsilon^2\log\frac{1}{\varepsilon} \int^{\frac{\rho}{\sqrt{\varepsilon}}}_0 \frac{1}{(1+r^2)^4}r^7dr\\[3mm]
				&=\ds c_8\omega_8 \varepsilon^2\log\frac{1}{\varepsilon} \left[\log(\varepsilon+\rho^2)+\log\frac{1}{\varepsilon}-\frac{11}{6}+\frac{3\varepsilon}{\varepsilon+\rho^2}-\frac{3\varepsilon^2}{2(\varepsilon+\rho^2)^2}+\frac{\varepsilon^3}{3(\varepsilon+\rho^2)^3}\right]\\[3mm]
				&=\ds c_8\omega_8 \varepsilon^2\left(\log\frac{1}{\varepsilon}\right)^2+c_8\omega_8\log\left(\frac{\varepsilon+\rho^2}{e^{\frac{11}{6}}}\right)\varepsilon^2\log\frac{1}{\varepsilon}+O(\varepsilon^3\log\frac{1}{\varepsilon}),
			\end{array}
		\end{equation}
		
		\begin{equation}\label{eqS3.7}
			\begin{array}{ll}
				I_1&\ds \le \ds 2c_8 \omega_8\varepsilon^2\log\frac{1}{\varepsilon} \int^{\frac{2\rho}{\sqrt{\varepsilon}}}_0 \frac{1}{(1+r^2)^4}r^7dr\\[3mm]
				&=\ds c_8\omega_8 \varepsilon^2\log\frac{1}{\varepsilon} \left[\log(\varepsilon+4\rho^2)+\log\frac{1}{\varepsilon}-\frac{11}{6}+\frac{3\varepsilon}{\varepsilon+4\rho^2}-\frac{3\varepsilon^2}{2(\varepsilon+4\rho^2)^2}+\frac{\varepsilon^3}{3(\varepsilon+4\rho^2)^3}\right]\\[3mm]
				&=\ds c_8\omega_8 \varepsilon^2\left(\log\frac{1}{\varepsilon}\right)^2+c_8\omega_8\log\left(\frac{\varepsilon+4\rho^2}{e^{\frac{11}{6}}}\right)\varepsilon^2\log\frac{1}{\varepsilon}+O(\varepsilon^3\log\frac{1}{\varepsilon}),
			\end{array}
		\end{equation}
	\begin{align}
				I_2\ \notag &\ds \ge \ds -4c_8 \varepsilon^2\int_{B(0,\frac{2 \rho}{\sqrt{\varepsilon}})}\frac{1}{(1+|y|^2)^4}\log {(1+|y|^2)}dy\\ \notag
				&=\ds -4 c_8\omega_8\varepsilon^2\int^{\frac{2\rho}{\sqrt{\varepsilon}}}_0\frac{r^7}{(1+r^2)^4}\log(1+r^2)dr\\  \notag
				&= \ds -2c_8\omega_8\varepsilon^2\int^{\frac{2\rho}{\sqrt{\varepsilon}}}_0\frac{(1+r^2)^3-3r^2(1+r^2)-1}{(1+r^2)^4}\log(1+r^2)d(1+r^2)\\  \notag
				&=\ds -2c_8\omega_8\varepsilon^2\int^{\frac{2\rho}{\sqrt{\varepsilon}}}_0\frac{1}{1+r^2}\log(1+r^2)d(1+r^2)\\  \label{eqS3.8}
				&+\ds 2c_8\omega_8\varepsilon^2\int^{\frac{2\rho}{\sqrt{\varepsilon}}}_0\frac{3r^2(1+r^2)+1}{(1+r^2)^4}\log(1+r^2)d(1+r^2)\\  \notag
				&\ge \ds -c_8\omega_8\varepsilon^2\left(\log(1+r^2)\right)^2{\Big\arrowvert}^{\frac{2\rho}{\sqrt{\varepsilon}}}_0\\  \notag
				&= \ds -c_8\omega_8\varepsilon^2\left(\log(1+\frac{4\rho^2}{\varepsilon})\right)^2\\  \notag
				&= \ds -c_8\omega_8\varepsilon^2\left[\log(\varepsilon+4\rho^2)+\log\frac{1}{\varepsilon}\right]^2\\  \notag
				&=\ds -c_8\omega_8\varepsilon^2\left(\log\frac{1}{\varepsilon}\right)^2-2c_8\omega_8\log(\varepsilon+4\rho^2)\varepsilon^2\log\frac{1}{\varepsilon}+O(\varepsilon^2),
			\end{align}
		and
		\begin{equation}\label{eqS3.9}
			\begin{array}{ll}
				I_2&\le \ds -4c_8 \varepsilon^2\int_{B(0,\frac{ \rho}{\sqrt{\varepsilon}})}\frac{1}{(1+|y|^2)^4}\log {(1+|y|^2)}dy\\[3mm]
				&=\ds -4 c_8\omega_8\varepsilon^2\int^{\frac{\rho}{\sqrt{\varepsilon}}}_0\frac{r^7}{(1+r^2)^4}\log(1+r^2)dr\\[3mm]
				&= \ds -2c_8\omega_8\varepsilon^2\int^{\frac{\rho}{\sqrt{\varepsilon}}}_0\frac{(1+r^2)^3-3r^2(1+r^2)-1}{(1+r^2)^4}\log(1+r^2)d(1+r^2)\\[3mm]
				&\le \ds -2c_8\omega_8\varepsilon^2\int^{\frac{\rho}{\sqrt{\varepsilon}}}_0\frac{1}{1+r^2}\log(1+r^2)d(1+r^2)+\ds 2c_8\omega_8\varepsilon^2\int^{\frac{\rho}{\sqrt{\varepsilon}}}_0\frac{3r^2}{(1+r^2)^2}+\frac{1}{(1+r^2)^3}d(1+r^2)\\[3mm]
				&= \ds -c_8\omega_8\varepsilon^2\left(\log(1+\frac{\rho^2}{\varepsilon})\right)^2+6c_8\omega_8\varepsilon^2\int^{\frac{\rho}{\sqrt{\varepsilon}}}_0\frac{1}{1+r^2}d(1+r^2)+O(\varepsilon^3)\\[3mm]
				&=\ds -c_8\omega_8\varepsilon^2\left(\log\frac{1}{\varepsilon}\right)^2-2c_8\omega_8\log(\varepsilon+\rho^2)\varepsilon^2\log\frac{1}{\varepsilon}+6c_8\omega_8\varepsilon^2\log(\frac{\varepsilon+\rho^2}{\varepsilon})+O(\varepsilon^2)\\[3mm]
				&=\ds  -c_8\omega_8\varepsilon^2\left(\log\frac{1}{\varepsilon}\right)^2-2c_8\omega_8\log\left(\frac{\varepsilon+\rho^2}{e^3}\right)\varepsilon^2\log\frac{1}{\varepsilon}+O(\varepsilon^2).
			\end{array}
		\end{equation}
		Therefore, according to $\eqref{eqS3.3}-\eqref{eqS3.9}$, one has
		\begin{equation*}
			\begin{array}{ll}
				\ds\int_\Omega V_\varepsilon^2 \log V_\varepsilon ^2 &\ge \ds \frac{1}{2}c_8 (\log c_8) \omega_8 \varepsilon^2 \log \frac{1}{\varepsilon}+c_8\omega_8 \varepsilon^2\left(\log\frac{1}{\varepsilon}\right)^2+c_8\omega_8\log\left(\frac{\varepsilon+\rho^2}{e^{\frac{11}{6}}}\right)\varepsilon^2\log\frac{1}{\varepsilon}\\[3mm]
				&-\ds c_8\omega_8\varepsilon^2\left(\log\frac{1}{\varepsilon}\right)^2-2c_8\omega_8\log(\varepsilon+4\rho^2)\varepsilon^2\log\frac{1}{\varepsilon}+O(\varepsilon^2)\\[3mm]
				&=\ds c_8\log\left(\frac{\sqrt{c_8}(\varepsilon+\rho^2)}{e^{\frac{11}{6}}(\varepsilon+4\rho^2)^2}\right) \omega_8\varepsilon^2 \log \frac{1}{\varepsilon} +O(\varepsilon^2),
			\end{array}
		\end{equation*}
		and
		\begin{equation*}
			\begin{array}{ll}
				\ds\int_\Omega V_\varepsilon^2 \log V_\varepsilon ^2 &\le \ds \frac{1}{2}c_8 (\log c_8) \omega_8 \varepsilon^2 \log \frac{1}{\varepsilon}+c_8\omega_8 \varepsilon^2\left(\log\frac{1}{\varepsilon}\right)^2+c_8\omega_8\log\left(\frac{\varepsilon+4\rho^2}{e^{\frac{11}{6}}}\right)\varepsilon^2\log\frac{1}{\varepsilon}\\[3mm]
				&-\ds c_8\omega_8\varepsilon^2\left(\log\frac{1}{\varepsilon}\right)^2-2c_8\omega_8\log\left(\frac{\varepsilon+\rho^2}{e^3}\right)\varepsilon^2\log\frac{1}{\varepsilon}+O(\varepsilon^2)\\[3mm]
				&=\ds c_8\log\left(\frac{\sqrt{c_8}e^{\frac{25}{6}}(\varepsilon+4\rho^2)}{(\varepsilon+\rho^2)^2}\right) \omega_8\varepsilon^2 \log \frac{1}{\varepsilon} +O(\varepsilon^2).
			\end{array}
		\end{equation*}
		%
	\end{proof}
	
	\begin{lemma}\label{lem3.6}
		Assume that $N=8$. $\underset{t\ge 0}{\mathop{\sup }}\,I(tV_\varepsilon)<\frac{2}{N}S^\frac{N}{4}$ provided one of the following assumptions holds:
		
		(i) $\mu<0$, $\tau\in\mathbb{R}$;
		
		(ii) $\mu=0$, $\tau>0$;
		
		(iii)$\mu=0$, $\tau<0$ and  $\frac{\sqrt{c_8}4e^{\frac{\lambda}{2\tau}+\frac{19}{6}}}{\rho_{max}^2}<1$.
		
	\end{lemma}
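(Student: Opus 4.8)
The plan is to adapt the scheme of Lemma \ref{lem3.4} to the borderline dimension $N=8$, using the two-sided estimates of Lemma \ref{lem3.5} in place of the exact expansion of Lemma \ref{lem3.3}. By Lemma \ref{lem2.2}, $I(tV_\varepsilon)$ vanishes at $t=0$ and tends to $-\infty$ as $t\to+\infty$, so it attains its maximum at some $t_\varepsilon\in(0,+\infty)$, and $\frac{d}{dt}I(tV_\varepsilon)\big|_{t=t_\varepsilon}=0$ gives
\[
\int\!\big(|\Delta V_\varepsilon|^2+\mu|\nabla V_\varepsilon|^2-\lambda V_\varepsilon^2\big)-t_\varepsilon^{2^{**}-2}\!\int|V_\varepsilon|^{2^{**}}-\tau\log t_\varepsilon^2\!\int V_\varepsilon^2-\tau\!\int V_\varepsilon^2\log V_\varepsilon^2=0.
\]
From Lemmas \ref{lem3.1}--\ref{lem3.2} the quotient defining $t_\varepsilon^{2^{**}-2}$ tends to $1$, so $t_\varepsilon\to1$; in particular $\log t_\varepsilon^2=O(\varepsilon^2\log\frac1\varepsilon)$ and $\tau\log t_\varepsilon^2\int V_\varepsilon^2=o(\varepsilon^2)$, exactly as in Lemma \ref{lem3.4}.

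A convenient simplification is to substitute the critical-point relation back into $I$: it collapses to
\[
\sup_{t\ge0}I(tV_\varepsilon)=I(t_\varepsilon V_\varepsilon)=\frac{2}{N}t_\varepsilon^{2^{**}}\!\int|V_\varepsilon|^{2^{**}}+\frac{\tau}{2}t_\varepsilon^2\!\int V_\varepsilon^2,
\]
which makes transparent that the logarithmic integral enters only through $t_\varepsilon$, with effective coefficient $-\tfrac{\tau}{2}$. Inserting $\int|\Delta V_\varepsilon|^2=S^{N/4}+O(\varepsilon^2)$, $\int|V_\varepsilon|^{2^{**}}=S^{N/4}+O(\varepsilon^4)$, $\int|\nabla V_\varepsilon|^2=C_8K_1\varepsilon+O(\varepsilon^2)$ and $\int V_\varepsilon^2=\frac12 c_8\omega_8\varepsilon^2\log\frac1\varepsilon+O(\varepsilon^2)$ from Lemmas \ref{lem3.1}--\ref{lem3.2}, together with the bounds of Lemma \ref{lem3.5} for $\int V_\varepsilon^2\log V_\varepsilon^2$, I expect an expansion
\[
I(t_\varepsilon V_\varepsilon)=\frac{2}{N}S^{\frac N4}+\frac{t_\varepsilon^2}{2}\mu\,C_8K_1\,\varepsilon+\Theta\,\varepsilon^2\log\tfrac1\varepsilon+O(\varepsilon^2),
\]
where $\Theta$ gathers the $\lambda$-, $\int V_\varepsilon^2$- and $\int V_\varepsilon^2\log V_\varepsilon^2$-contributions, the last of which is known only up to the gap in Lemma \ref{lem3.5}.

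It then remains to sign the leading correction. In case (i), $\mu<0$ (and $C_8K_1>0$) makes $\frac{t_\varepsilon^2}2\mu C_8K_1\varepsilon$ a negative term of order $\varepsilon$ that dominates the $O(\varepsilon^2\log\frac1\varepsilon)$ remainder for every $\tau\in\mathbb R$, so the supremum lies below $\frac2N S^{\frac N4}$ for small $\varepsilon$. In cases (ii) and (iii) the $\varepsilon$-term disappears and one must instead arrange $\Theta<0$. Since the logarithmic integral enters with effective sign $-\frac\tau2$, for $\tau>0$ I would estimate $\int V_\varepsilon^2\log V_\varepsilon^2$ from below (the lower bound of Lemma \ref{lem3.5}), while for $\tau<0$ from above (the upper bound). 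In the $\tau<0$ case the computation reduces $\Theta<0$, after letting $\varepsilon\to0$ in the $\rho$-dependent constant of the upper estimate, to a smallness inequality for the inradius, which is precisely the hypothesis $\frac{\sqrt{c_8}4e^{\frac{\lambda}{2\tau}+\frac{19}{6}}}{\rho_{max}^2}<1$.

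The main obstacle, and what separates $N=8$ from $N>8$, is that here $\int V_\varepsilon^2$ and $\int V_\varepsilon^2\log V_\varepsilon^2$ are \emph{both} of order $\varepsilon^2\log\frac1\varepsilon$ and therefore compete at the same order with the $\lambda$-term; for $N>8$ the logarithmic integral lives at a strictly different order and carries the clean positive coefficient $c_N\frac{N-4}2 K_2$, which immediately settles the sign. Thus the entire argument hinges on controlling the sign of $\Theta$, whose $\int V_\varepsilon^2\log V_\varepsilon^2$-part is only pinned down up to the $\rho_{max}$-dependent gap between the two estimates of Lemma \ref{lem3.5}. Handling this gap --- and, for $\tau<0$, converting the sign condition into the explicit geometric constraint on $\rho_{max}$ --- is the crux; the remaining terms are routine once $t_\varepsilon\to1$ has been established.
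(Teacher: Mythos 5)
Your strategy is, in substance, the paper's own: the same maximizer $t_\varepsilon$ with $t_\varepsilon\to 1$, the same expansion of $I(t_\varepsilon V_\varepsilon)$ to order $\varepsilon^2\log\frac{1}{\varepsilon}$, and the same selective use of the two bounds of Lemma \ref{lem3.5} according to the sign of the effective coefficient $-\frac{\tau}{2}$ (lower bound for $\tau>0$, upper bound for $\tau<0$). Your collapse identity
\[
I(t_\varepsilon V_\varepsilon)=\frac{2}{N}\,t_\varepsilon^{2^{**}}\int|V_\varepsilon|^{2^{**}}+\frac{\tau}{2}\,t_\varepsilon^{2}\int V_\varepsilon^{2}
\]
is correct (substitute the stationarity relation into $I$), but it is only a cosmetic variant: since $t_\varepsilon^{2^{**}-2}$ is itself expressed through $\int V_\varepsilon^2\log V_\varepsilon^2$, the logarithmic integral re-enters through $t_\varepsilon^{2^{**}}$ with exactly the same coefficient, so expanding your formula reproduces the paper's display term by term and Lemma \ref{lem3.5} is needed with the same precision. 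Your case (i) (the negative $O(\varepsilon)$ term $\frac{t_\varepsilon^2}{2}\mu C_8K_1\varepsilon$ dominating the $O(\varepsilon^2\log\frac{1}{\varepsilon})$ bracket) and your case (iii) (upper bound of Lemma \ref{lem3.5} with $\rho=\rho_{max}$, then $\varepsilon\to0$ in the constant) agree with the paper's Cases 1 and 3.

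The genuine gap is case (ii). You correctly identify that for $\tau>0$ one must use the lower bound of Lemma \ref{lem3.5}, but you never supply the mechanism that makes $\Theta<0$, and with a fixed cutoff radius $\rho$ it need not hold: letting $\varepsilon\to0$ in that lower bound gives
\[
\Theta\ \le\ c_8\omega_8\Bigl[\frac{\tau-\lambda}{4}-\frac{\tau}{2}\log\Bigl(\frac{\sqrt{c_8}}{16\,e^{11/6}\rho^2}\Bigr)\Bigr],
\]
and since case (ii) assumes no relation between $\lambda$ and $\tau$ (beyond $\lambda<\delta_1(\Omega)+\tau$), the bracket has no definite sign for a generic $\rho$; if $\rho$ is such that the logarithm is small or negative, the argument is inconclusive. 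The paper's key move, absent from your sketch, is that $\rho$ is a free parameter of the construction of $V_\varepsilon$: because $\tau>0$ and the constant inside the logarithm blows up as $\rho\to0^{+}$, one first fixes $\rho$ so small that $\frac{c_8^{\tau/2}}{25^{\tau}e^{(14\tau-3\lambda)/6}\rho^{2\tau}}>1$ (equivalently, the bracket above is negative), and only then sends $\varepsilon\to0$. Without invoking this freedom, case (ii) does not close. (A minor inaccuracy elsewhere: $\log t_\varepsilon^2=O(\varepsilon^2\log\frac{1}{\varepsilon})$ holds only when $\mu=0$; for $\mu<0$ one merely has $\log t_\varepsilon^2=O(\varepsilon)$, which still yields $\tau\log t_\varepsilon^2\int V_\varepsilon^2=o(\varepsilon^2)$, so nothing is lost there.)
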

	\begin{proof}	
		For $N=8$, similar to the case of $N>8$, we can find a $t_\varepsilon \in(0,+\infty)$ such that
		\begin{equation*}
			I(t_\varepsilon V_\varepsilon )=\underset{t\ge 0}{\mathop{\sup }}\,I(tV_\varepsilon),
		\end{equation*}
		and
		$$
		\tau \log t_{\varepsilon}^2\int|V_\varepsilon|^2=o(\varepsilon^2\log\frac{1}{\varepsilon}).
		$$
		
		\textbf{Case} $\mathbf{1}$: \ $\mu<0$, $\tau\in\mathbb{R}$.
		
		By Lemma \ref{lem3.5}, we could obtain
		$$\left|\int_\Omega V_\varepsilon^2\log V_\varepsilon^2\right|\leq O(\varepsilon^2\log\frac{1}{\varepsilon}).$$
		If $\varepsilon>0$ is small enough, we have
		\begin{equation*}
			\begin{array}{ll}
				I(t_\varepsilon V_\varepsilon )
				&=\ds \frac{t_\varepsilon^2}{2} \ds(|\Delta V_\varepsilon|^2_2 +\mu |\nabla V_\varepsilon|^2_2-\lambda |V_\varepsilon|^2_2) -\frac{t_\varepsilon^{ 2^{**} } } {2^{**}} |V_\varepsilon|^ {2^{**}}_{2^{**}}-\ds \frac{\tau}{2}t^2_\varepsilon(\log t^2_\varepsilon -1)|V_\varepsilon|^2_2-\frac{\tau}{2}t_\varepsilon^2 \ds \int_\Omega V_\varepsilon^2\log V^2_\varepsilon \\[3mm]
				&\leq \ds \frac{t_\varepsilon^2}{2}S^{\frac{N}{4}}-\frac{t_\varepsilon^2}{2^{**}}S^{\frac{N}{4}}+\frac{t_{\varepsilon }^{2}}{2} \mu C_8 K_1\varepsilon+O(\varepsilon^2\log\frac{1}{\varepsilon})\\[3mm]
				& \leq \ds  \frac{2}{N} S^\frac{N}{4} +\frac{t_{\varepsilon }^{2}}{2} \mu C_8 K_1\varepsilon+O(\varepsilon^2\log\frac{1}{\varepsilon})\\[3mm]
				&<\ds\frac{2}{N} S^\frac{N}{4}.
			\end{array}
		\end{equation*}
		This implies that
		\[
		\underset{t\ge 0}{\mathop{\sup }}\,I(tV_\varepsilon)<\frac{2}{N}S^\frac{N}{4}.
		\]
		
		\textbf{Case} $\mathbf{2}$: \ $\mu=0$, $\tau>0$.
		
		If $\varepsilon>0$ is small enough, we obtain
		\begin{align*}
				I(t_\varepsilon V_\varepsilon )
				&=\ds \frac{t_\varepsilon^2}{2} \ds(|\Delta V_\varepsilon|^2_2 -\lambda |V_\varepsilon|^2_2) -\frac{t_\varepsilon^{ 2^{**} } } {2^{**}} |V_\varepsilon|^ {2^{**}}_{2^{**}}-\ds \frac{\tau}{2}t^2_\varepsilon(\log t^2_\varepsilon -1)|V_\varepsilon|^2_2-\frac{\tau}{2}t_\varepsilon^2 \ds \int_\Omega V_\varepsilon^2\log V^2_\varepsilon \\
				& \leq \ds  \frac{2}{N} S^\frac{N}{4} -\frac{\lambda}{4} t_\varepsilon^2 c_8\omega_8 \varepsilon^2 \log \frac{1}{\varepsilon}-\frac{\tau}{4}t_\varepsilon^2\log t_\varepsilon^2 c_8\omega_8 \varepsilon^2 \log \frac{1}{\varepsilon}+ O(\varepsilon^2) \\
				&+\ds \frac{\tau}{4} t_\varepsilon^2 c_8\omega_8\varepsilon^2 \log \frac{1}{\varepsilon}-\frac{\tau}{2}t_\varepsilon^2\left(c_8\log\left(\frac{\sqrt{c_8}(\varepsilon+\rho^2)}{e^{\frac{11}{6}}(\varepsilon+4\rho^2)^2}\right) \omega_8\varepsilon^2 \log \frac{1}{\varepsilon}\right) \\
				& \leq \ds \frac{2}{N} S^\frac{N}{4} -\frac{t_\varepsilon^2}{2}c_8\omega_8\varepsilon^2 \log \frac{1}{\varepsilon}\left[\frac{\lambda-\tau}{2}+\tau\log(\frac{\sqrt{c_8}(\varepsilon+\rho^2)}{e^{\frac{11}{6}}(\varepsilon+4\rho^2)^2})\right]+o(\varepsilon^2 \log \frac{1}{\varepsilon})\\
				&=\ds \frac{2}{N} S^\frac{N}{4} -\frac{t_\varepsilon^2}{2}c_8\omega_8\varepsilon^2 \log \frac{1}{\varepsilon}\left(\log(\frac{{c_8}^{\frac{\tau}{2}}(\varepsilon+\rho^2)^{\tau}}{e^{\frac{14\tau-3\lambda}{6}}(\varepsilon+4\rho^2)^{2\tau}})\right)+o(\varepsilon^2 \log \frac{1}{\varepsilon})\\
				&\leq \ds\frac{2}{N} S^\frac{N}{4} -\frac{t_\varepsilon^2}{2}c_8\omega_8\varepsilon^2 \log \frac{1}{\varepsilon}\log\left(\frac{{c_8}^{\frac{\tau}{2}}}{25^\tau e^{\frac{14\tau-3\lambda}{6}}\rho^{2\tau}}\right)+o(\varepsilon^2 \log \frac{1}{\varepsilon})\\
				&< \ds\frac{2}{N} S^\frac{N}{4},
		\end{align*}
		where we choose $\rho>0$ small enough such that $\frac{{c_8}^{\frac{\tau}{2}}}{25^\tau e^{\frac{14\tau-3\lambda}{6}}\rho^{2\tau}}>1$.
		This implies that
		\[
		\underset{t\ge 0}{\mathop{\sup }}\,I(tV_\varepsilon)<\frac{2}{N}S^\frac{N}{4}.
		\]
		
		\textbf{Case} $\mathbf{3}$: \ $\mu=0$, $\tau<0$ and $\frac{\sqrt{c_8}4e^{\frac{\lambda}{2\tau}+\frac{19}{6}}}{\rho_{max}^2}<1$.
		
		We choose $\rho=\rho_{max}$. Similar to Case 2, for small enough $\varepsilon>0$, we have
		\begin{equation*}
			\begin{array}{ll}
				I(t_\varepsilon V_\varepsilon )
				&=\ds \frac{t_\varepsilon^2}{2} \ds(|\Delta V_\varepsilon|^2_2 -\lambda |V_\varepsilon|^2_2) -\frac{t_\varepsilon^{ 2^{**} } } {2^{**}} |V_\varepsilon|^ {2^{**}}_{2^{**}}-\ds \frac{\tau}{2}t^2_\varepsilon(\log t^2_\varepsilon -1)|V_\varepsilon|^2_2-\frac{\tau}{2}t_\varepsilon^2 \ds \int_\Omega V_\varepsilon^2\log V^2_\varepsilon \\[3mm]
				
				& \leq \ds  \frac{2}{N} S^\frac{N}{4}-\left(\frac{\lambda}{4}-\frac{\tau}{4}\right) t_\varepsilon^2 c_8\omega_8 \varepsilon \log \frac{1}{\varepsilon} -\ds \frac{\tau}{2}t_\varepsilon^2 c_8\log\left(\frac{\sqrt{c_8}e^{\frac{25}{6}}(\varepsilon+4\rho^2)}{(\varepsilon+\rho^2)^2}\right)\omega_8\varepsilon^2 \log \frac{1}{\varepsilon}+o(\varepsilon^2 \log \frac{1}{\varepsilon}) \\[3mm]
				& = \ds \frac{2}{N} S^\frac{N}{4} -\frac{t_\varepsilon^2}{2}\tau c_8\omega_8\varepsilon^2 \log \frac{1}{\varepsilon}\left[(\frac{\lambda}{2\tau}-1)+\log(\frac{\sqrt{c_8}e^{\frac{25}{6}}(\varepsilon+4\rho^2)}{(\varepsilon+\rho^2)^2})\right]+\ds o(\varepsilon^2 \log \frac{1}{\varepsilon})\\[3mm]
				&\leq \ds\frac{2}{N} S^\frac{N}{4} -\frac{t_\varepsilon^2}{2}\tau c_8\omega_8\varepsilon^2 \log \frac{1}{\varepsilon}\log\left(\frac{\sqrt{c_8}4e^{\frac{\lambda}{2\tau}+\frac{19}{6}}}{\rho^2}\right)+o(\varepsilon^2 \log \frac{1}{\varepsilon})\\[3mm]
				&<\ds\frac{2}{N} S^\frac{N}{4}.
			\end{array}
		\end{equation*}
		This implies that
		\[
		\underset{t\ge 0}{\mathop{\sup }}\,I(tV_\varepsilon)<\frac{2}{N}S^\frac{N}{4}.
		\]
		%
	\end{proof}
	
	\textbf{The case of} $\mathbf{5\leq N\leq 7:}$
	
	
	\begin{lemma}\label{lem3.7}
		If $5\leq N\leq 7$, then we have that, as $\varepsilon\to 0^+$
		$$
		\ds \int_\Omega V_\varepsilon^2 \log V_\varepsilon ^2=\ds -\frac{N-4}{2} c_N \omega_N \varepsilon^{\frac{N-4}{2}} \log \frac{1}{\varepsilon}\int_0^{2\rho}\varphi^2r^{7-N}dr+O(\varepsilon^{\frac{N-4}{2}}).
		$$
		where $c_N$ and $\omega_N$ have been given in Lemma \ref{lem3.1}.
	\end{lemma}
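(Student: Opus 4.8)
The plan is to follow the same route as in Lemmas~\ref{lem3.3} and~\ref{lem3.5}: write $V_\varepsilon^2=\varphi^2u_\varepsilon^2$ and split the logarithm as $\log V_\varepsilon^2=\log\varphi^2+\log u_\varepsilon^2$, so that
\[
\int_\Omega V_\varepsilon^2\log V_\varepsilon^2=\int_\Omega\varphi^2u_\varepsilon^2\log\varphi^2\,dx+\int_\Omega\varphi^2u_\varepsilon^2\log u_\varepsilon^2\,dx=:A+B.
\]
Recalling $u_\varepsilon^2=\dfrac{c_N\varepsilon^{(N-4)/2}}{(\varepsilon+|x|^2)^{N-4}}$ with $c_N=(N(N-4)(N^2-4))^{(N-4)/4}$, I would expand
\[
\log u_\varepsilon^2=\log c_N+\tfrac{N-4}{2}\log\varepsilon-(N-4)\log(\varepsilon+|x|^2),
\]
which singles out the term $\tfrac{N-4}{2}\log\varepsilon\int_\Omega\varphi^2u_\varepsilon^2$ as the only one capable of producing the announced leading order $\varepsilon^{(N-4)/2}\log\frac1\varepsilon$; every remaining piece should be absorbed into $O(\varepsilon^{(N-4)/2})$. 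The structural novelty compared with $N\ge8$ is that here $7-N\ge0$, so the radial mass concentrates at scale $O(1)$ rather than at the bubble scale $\sqrt\varepsilon$, and no rescaling is needed.

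First I would dispose of the harmless pieces. Since $\log\varphi^2$ is supported in $\{\rho<|x|<2\rho\}$, where $u_\varepsilon^2\le c_N\varepsilon^{(N-4)/2}\rho^{-2(N-4)}$ and $\varphi^2|\log\varphi^2|$ is bounded, one gets $A=O(\varepsilon^{(N-4)/2})$. By the case $5\le N\le7$ of \eqref{eqS3.2} in Lemma~\ref{lem3.1}, $\int_\Omega\varphi^2u_\varepsilon^2=\int_\Omega V_\varepsilon^2=O(\varepsilon^{(N-4)/2})$, so the term $\log c_N\int_\Omega\varphi^2u_\varepsilon^2$ is $O(\varepsilon^{(N-4)/2})$ as well. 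For $-(N-4)\int_\Omega\varphi^2u_\varepsilon^2\log(\varepsilon+|x|^2)$, I would pass to polar coordinates and write it as $c_N\omega_N\varepsilon^{(N-4)/2}\int_0^{2\rho}\dfrac{\varphi^2r^{N-1}\log(\varepsilon+r^2)}{(\varepsilon+r^2)^{N-4}}\,dr$; the pointwise limit of the integrand is $2\varphi^2r^{7-N}\log r$, and the crucial point is that $r^{7-N}|\log r|$ is integrable near $r=0$ precisely because $7-N\ge0$ for $N\le7$. A dominated-convergence bound (using $|\log(\varepsilon+r^2)|\le C+\log\frac1{r^2}$ and $\frac{r^{N-1}}{(\varepsilon+r^2)^{N-4}}\le r^{7-N}$) then keeps this radial integral bounded, so the whole term is $O(\varepsilon^{(N-4)/2})$.

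The delicate step, and the main obstacle, is the term $\tfrac{N-4}{2}\log\varepsilon\int_\Omega\varphi^2u_\varepsilon^2$: here the diverging factor $\log\varepsilon$ multiplies an integral whose leading asymptotics must be pinned down exactly, so the mere convergence furnished by Lemma~\ref{lem3.1} is not enough. Writing $\int_\Omega\varphi^2u_\varepsilon^2=c_N\omega_N\varepsilon^{(N-4)/2}J(\varepsilon)$ with $J(\varepsilon)=\int_0^{2\rho}\dfrac{\varphi^2r^{N-1}}{(\varepsilon+r^2)^{N-4}}\,dr$ and $J_0=\int_0^{2\rho}\varphi^2r^{7-N}\,dr$, I would need the quantitative statement $\log\frac1\varepsilon\,(J(\varepsilon)-J_0)=O(1)$, so that replacing $J(\varepsilon)$ by $J_0$ costs only $O(\varepsilon^{(N-4)/2})$. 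This I would verify by estimating $J(\varepsilon)-J_0=\int_0^{2\rho}\varphi^2r^{N-1}\big[(\varepsilon+r^2)^{-(N-4)}-r^{-2(N-4)}\big]\,dr$ directly in each dimension, obtaining $O(\varepsilon)$ for $N=5$, $O(\varepsilon\log\frac1\varepsilon)$ for $N=6$, and $O(\varepsilon^{1/2})$ for $N=7$ — in every case $o\big(1/\log\frac1\varepsilon\big)$. Collecting the leading contribution $\tfrac{N-4}{2}\log\varepsilon\cdot c_N\omega_N\varepsilon^{(N-4)/2}J_0=-\tfrac{N-4}{2}c_N\omega_N\varepsilon^{(N-4)/2}\log\frac1\varepsilon\int_0^{2\rho}\varphi^2r^{7-N}\,dr$ together with all the $O(\varepsilon^{(N-4)/2})$ remainders from $A$ and $B$ then yields the claim.
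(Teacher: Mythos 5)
Your proposal is correct and takes essentially the same route as the paper: the same splitting of $\log(\varphi^2u_\varepsilon^2)$ into $\log\varphi^2+\log c_N+\frac{N-4}{2}\log\varepsilon-(N-4)\log(\varepsilon+|x|^2)$, extraction of the leading term $\frac{N-4}{2}\log\varepsilon\int_\Omega\varphi^2u_\varepsilon^2$ with a quantitative replacement of the radial integral by $\int_0^{2\rho}\varphi^2r^{7-N}dr$, and absorption of the $\log(\varepsilon+|x|^2)$ contribution into $O(\varepsilon^{\frac{N-4}{2}})$. The only cosmetic differences are that you handle the remainder by dominated convergence and state explicit per-dimension rates for $J(\varepsilon)-J_0$, whereas the paper (detailing only $N=7$) uses the algebraic identity $\frac{r^6}{(\varepsilon+r^2)^3}=1-\frac{\varepsilon^3+3\varepsilon^2r^2+3\varepsilon r^4}{(\varepsilon+r^2)^3}$ together with an integration by parts for the term $II$.
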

	\begin{proof}
		Following the definition of $V_\varepsilon$, one has
			\begin{align}
				\ds \int_\Omega V_\varepsilon^2 \log V_\varepsilon ^2 \notag &=\ds \int_\Omega \varphi^2u_\varepsilon^2\log(\varphi^2 u_{\varepsilon}^2)dx\\ \notag
				&=c_N \ds \int_{B(0,2\rho)} \varphi ^2 \frac{\varepsilon^{\frac{N-4}{2}}}{(\varepsilon +|x|^2)^{N-4}} \log\left[c_N \varphi ^2 \frac{\varepsilon^{\frac{N-4}{2}}}{(\varepsilon +|x|^2)^{N-4}}\right]dx\\ \notag
				&=c_N \ds \int_{B(0,2\rho)} \varphi ^2 \frac{\varepsilon^{\frac{N-4}{2}}}{(\varepsilon +|x|^2)^{N-4}} \left(\log c_N+\log \varphi^2+\log \frac{\varepsilon^{\frac{N-4}{2}}}{(\varepsilon +|x|^2)^{N-4}}\right)dx\\ \label{eqS3.10}
				&=c_N \ds \int_{B(0,2\rho)} \varphi ^2 \frac{\varepsilon^{\frac{N-4}{2}}}{(\varepsilon +|x|^2)^{N-4}}\log \frac{\varepsilon^{\frac{N-4}{2}}}{(\varepsilon +|x|^2)^{N-4}}dx+O(\varepsilon^{\frac{N-4}{2}})\\ \notag
				&=\ds-\frac{N-4}{2} c_N \varepsilon^{\frac{N-4}{2}} \log \frac{1}{\varepsilon}\int_{B(0,2\rho)}\varphi^2\frac{1}{(\varepsilon +|x|^2)^{N-4}}dx\\ \notag
				&+\ds c_N \varepsilon^{\frac{N-4}{2}}\int_{B(0,2\rho)}\varphi^2\frac{1}{(\varepsilon +|x|^2)^{N-4}}\log\frac{1}{(\varepsilon +|x|^2)^{N-4}}dx +O(\varepsilon^{\frac{N-4}{2}})\\ \notag
				&=\ds I+II+O(\varepsilon^{\frac{N-4}{2}}).
			\end{align}
		Next, we will verify the estimation for $N=7$ as an example.
		%
		%
		By direct computation, we have 
				\begin{align}
				I\notag&=\ds -\frac{3}{2}c_7\varepsilon^{\frac{3}{2}}\log\frac{1}{\varepsilon}\int_{B(0,2\rho)}\varphi^2\frac{1}{(\varepsilon +|x|^2)^3}dx\\ \notag
				&=\ds -\frac{3}{2}c_7\omega_7\varepsilon^{\frac{3}{2}}\log\frac{1}{\varepsilon}\int_{0}^{2\rho}\varphi^2\frac{r^6}{(\varepsilon+r^2)^3}dr\\ \label{eqS3.11}
				&=\ds -\frac{3}{2}c_7\omega_7\varepsilon^{\frac{3}{2}}\log\frac{1}{\varepsilon}\int_{0}^{2\rho}\varphi^2\left(1-\frac{\varepsilon^3+3\varepsilon^2r^2+3\varepsilon r^4}{(\varepsilon+r^2)^3}\right)dr\\ \notag
				&=\ds -\frac{3}{2}c_7\omega_7\varepsilon^{\frac{3}{2}}\log\frac{1}{\varepsilon}\int_{0}^{2\rho}\varphi^2 dr -\frac{3}{2}c_7\omega_7\varepsilon^{\frac{3}{2}}\log\frac{1}{\varepsilon}\int_{0}^{2\rho}\varphi^2 \frac{\varepsilon^3+3\varepsilon^2r^2+3\varepsilon r^4}{(\varepsilon+r^2)^3}dr\\ \notag
				&=\ds-\frac{3}{2}c_7\omega_7\varepsilon^{\frac{3}{2}}\log\frac{1}{\varepsilon}\int_{0}^{2\rho}\varphi^2 dr+O(\varepsilon^2\log\frac{1}{\varepsilon}),
				\end{align}
		and
		\begin{equation}\label{eqS3.12}
			\begin{array}{ll}
				|II|&=\ds c_7\varepsilon^{\frac{3}{2}}\int_{B(0,2\rho)}\varphi^2\frac{1}{(\varepsilon +|x|^2)^3}\log \frac{1}{(\varepsilon +|x|^2)^3}dx\\[3mm]
				&\le \ds c_7\omega_7\varepsilon^{\frac{3}{2}}\int_{0}^{\rho}\log \frac{1}{(\varepsilon+r^2)^3}dr+O(\varepsilon^{\frac{3}{2}})\\[3mm]
				&=\ds -3c_7\omega_7\varepsilon^{\frac{3}{2}}\int_{0}^{\rho}\log(\varepsilon+r^2)dr+O(\varepsilon^{\frac{3}{2}})\\[3mm]
				&=\ds -3c_7\omega_7\varepsilon^{\frac{3}{2}} r\log(\varepsilon+r^2){\Big\arrowvert}^{\rho}_0+3c_7\omega_7\varepsilon^{\frac{3}{2}}\int_{0}^{\rho}r\cdot\frac{1}{\varepsilon+r^2}\cdot2rdr+O(\varepsilon^{\frac{3}{2}})\\[3mm]
				&=\ds O(\varepsilon^{\frac{3}{2}}).
			\end{array}
		\end{equation}
		Hence, when $N=7$, combining with (\ref{eqS3.10})-(\ref{eqS3.12}), we get
		$$
		\ds \int_\Omega V_\varepsilon^2 \log V_\varepsilon ^2=\ds -\frac{3}{2} c_7 \omega_7 \varepsilon^{\frac{3}{2}} \log \frac{1}{\varepsilon}\int_0^{2\rho}\varphi^2dr+O(\varepsilon^{\frac{3}{2}}).
		$$

		For $N=5$ or $N=6$, we can use a similar  method as above to obtain the following estimation:
		\begin{equation*}
			\begin{array}{ll}
				\ds \int_\Omega V_\varepsilon^2 \log V_\varepsilon ^2=\left\{ \begin{matrix}
					\ds -c_6 \omega_6 \varepsilon \log \frac{1}{\varepsilon}\int_0^{2\rho}\varphi^2 rdr+O(\varepsilon),&for~N=6 , \\[3mm]
					\ds -\frac{1}{2}c_5 \omega_5 \varepsilon^{\frac{1}{2}} \log \frac{1}{\varepsilon}\int_0^{2\rho}\varphi^2 r^2dr+O(\varepsilon^{\frac{1}{2}}),&for~N=5. \\[3mm]
				\end{matrix} \right.
			\end{array}
		\end{equation*}
		We omit it here.
		%
	\end{proof}
	
	\begin{lemma}\label{lem3.8}
		If $5\le N \le7$, then $\underset{t\ge 0}{\mathop{\sup }}\,I(tV_\varepsilon)<\frac{2}{N}S^\frac{N}{4}$ provided one of the following assumptions holds:
		
		(i)$N=7$, $\mu<0$, $\tau\in\mathbb{R}$ or $\mu=0$, $\tau<0$;
		
		(ii)$N=6$, $\mu<0$, $\tau\in\mathbb{R}$ or $\mu=0$, $\tau<0$;
		
		(iii)$N=5$, $\tau<0$.
	\end{lemma}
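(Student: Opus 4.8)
The plan is to follow the template of Lemmas \ref{lem3.4} and \ref{lem3.6}. Since $I(0)=0$ and $I(tV_\varepsilon)\to-\infty$ as $t\to+\infty$, there is a maximizer $t_\varepsilon\in(0,+\infty)$, and differentiating $I(tV_\varepsilon)$ in $t$ shows that $t_\varepsilon$ solves
\[
t_\varepsilon^{2^{**}-2}=\frac{\ds\int(|\Delta V_\varepsilon|^2+\mu|\nabla V_\varepsilon|^2-\lambda|V_\varepsilon|^2)-\tau\log t_\varepsilon^2\ds\int V_\varepsilon^2-\tau\ds\int V_\varepsilon^2\log V_\varepsilon^2}{\ds\int|V_\varepsilon|^{2^{**}}}.
\]
By Lemma \ref{lem3.1} the leading terms of numerator and denominator are both $S^{N/4}$, so $t_\varepsilon\to1$; since $\log t_\varepsilon^2\to0$ and $\int V_\varepsilon^2=O(\varepsilon^{(N-4)/2})$, the cross term $\tau\log t_\varepsilon^2\int V_\varepsilon^2$ is of smaller order than the corrections identified below and may be discarded. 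Substituting the expansions of Lemmas \ref{lem3.1}, \ref{lem3.2} and \ref{lem3.7} and using $(\frac12-\frac1{2^{**}})S^{N/4}=\frac2NS^{N/4}$, I would write
\[
I(t_\varepsilon V_\varepsilon)=\frac{2}{N}S^{N/4}+\frac{t_\varepsilon^2}{2}\mu\ds\int|\nabla V_\varepsilon|^2-\frac{\tau}{2}t_\varepsilon^2\ds\int V_\varepsilon^2\log V_\varepsilon^2+(\text{lower order}),
\]
where the term $-\frac{t_\varepsilon^2}{2}\lambda\int V_\varepsilon^2=O(\varepsilon^{(N-4)/2})$ carries no logarithm and is absorbed into the error whenever a logarithmic correction is present.

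The heart of the matter is comparing the two surviving corrections. By Lemma \ref{lem3.2} the gradient term $\mu\int|\nabla V_\varepsilon|^2$ is of order $\varepsilon$ for $N=7$, of order $\varepsilon\log\frac1\varepsilon$ for $N=6$, and only $O(\varepsilon^{1/2})$ for $N=5$; by Lemma \ref{lem3.7} the logarithmic term $\int V_\varepsilon^2\log V_\varepsilon^2$ is of order $\varepsilon^{(N-4)/2}\log\frac1\varepsilon$ with strictly negative leading coefficient $-\frac{N-4}{2}c_N\omega_N\int_0^{2\rho}\varphi^2r^{7-N}dr$. Hence for $N=7$ the gradient correction dominates, for $N=5$ the logarithmic one dominates, and for $N=6$ the two balance at the common order $\varepsilon\log\frac1\varepsilon$.

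With this dichotomy the sign analysis is routine. For $N=7$ with $\mu<0$, the lower bound $\int|\nabla V_\varepsilon|^2\ge\frac{9}{32}c_7\omega_7\varepsilon+O(\varepsilon^{3/2})$ multiplied by $\mu<0$ produces a negative leading correction $\frac{9}{32}\mu c_7\omega_7\varepsilon<0$ that swamps the $O(\varepsilon^{3/2}\log\frac1\varepsilon)$ logarithmic contribution for every $\tau\in\mathbb{R}$; for $N=7$ with $\mu=0$, $\tau<0$, the gradient term is absent and the surviving $-\frac{\tau}{2}t_\varepsilon^2\int V_\varepsilon^2\log V_\varepsilon^2=\frac{3\tau}{4}t_\varepsilon^2c_7\omega_7\varepsilon^{3/2}\log\frac1\varepsilon\int_0^{2\rho}\varphi^2dr+O(\varepsilon^{3/2})$ is negative since $\tau<0$. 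For $N=5$, $\tau<0$, the logarithmic term dominates both the gradient and $\lambda$ corrections and equals $\frac{\tau}{4}t_\varepsilon^2c_5\omega_5\varepsilon^{1/2}\log\frac1\varepsilon\int_0^{2\rho}\varphi^2r^2dr<0$ irrespective of the sign of $\mu$, which is precisely why only a condition on $\tau$ is needed in dimension five.

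The genuine obstacle is $N=6$, where gradient and logarithm balance at order $\varepsilon\log\frac1\varepsilon$ and must be combined. Collecting the two leading coefficients gives
\[
\frac{t_\varepsilon^2}{2}\mu\cdot2c_6\omega_6+\Big(-\frac{\tau}{2}t_\varepsilon^2\Big)\Big(-c_6\omega_6\ds\int_0^{2\rho}\varphi^2r\,dr\Big)=\frac{t_\varepsilon^2c_6\omega_6}{2}\Big(2\mu+\tau\ds\int_0^{2\rho}\varphi^2r\,dr\Big),
\]
so I would need $2\mu+\tau\int_0^{2\rho}\varphi^2r\,dr<0$. When $\mu<0$ and $\tau\le0$ this holds automatically; when $\mu<0$ and $\tau>0$, since $\int_0^{2\rho}\varphi^2r\,dr\to0$ as $\rho\to0$ I would shrink the cutoff radius $\rho\in(0,\rho_{max}/2]$ until $\tau\int_0^{2\rho}\varphi^2r\,dr<2|\mu|$, noting that this does not disturb the leading terms $\int|\Delta V_\varepsilon|^2,\int|V_\varepsilon|^{2^{**}}\to S^{N/4}$; and when $\mu=0$, $\tau<0$ the coefficient reduces to $\frac{\tau}{2}\int_0^{2\rho}\varphi^2r\,dr<0$ for any admissible $\rho$. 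In every case the leading correction is strictly negative, so $\sup_{t\ge0}I(tV_\varepsilon)=I(t_\varepsilon V_\varepsilon)<\frac{2}{N}S^{N/4}$ for all sufficiently small $\varepsilon>0$, which through $c_M\le\sup_{t\ge0}I(tV_\varepsilon)$ and Lemmas \ref{lem2.6}, \ref{lem2.7} yields the nontrivial weak solution.
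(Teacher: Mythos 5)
Your proposal is correct and takes essentially the same approach as the paper's own proof: the same maximizer $t_\varepsilon\to 1$ with the cross term $\tau\log t_\varepsilon^2\int V_\varepsilon^2$ discarded as $o(\varepsilon^{\frac{N-4}{2}})$, the same substitution of the expansions from Lemmas \ref{lem3.1}, \ref{lem3.2} and \ref{lem3.7}, and the same case-by-case sign analysis, including the $N=6$ balance condition $2\mu+\tau\int_0^{2\rho}\varphi^2 r\,dr<0$ arranged by taking $\rho$ small. No gaps to report.
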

	\begin{proof}
		
		Similar to the case of $N>8$, we can find a $t_\varepsilon \in(0,+\infty)$ such that
		\begin{equation*}
			I(t_\varepsilon V_\varepsilon )=\underset{t\ge 0}{\mathop{\sup }}\,I(tV_\varepsilon),
		\end{equation*}
		and
		$$
		\tau \log t_{\varepsilon}^2\int|V_\varepsilon|^2=o(\varepsilon^{\frac{N-4}{2}}).
		$$
		%
		%
		
		For $N=7$, by Lemma \ref{lem3.2}, we have, as $\varepsilon\to0^+$
		$$
		\ds\int_ \Omega  |\nabla V_\varepsilon |^2 dx\ge \ds \frac{9}{32}c_7\omega_7\varepsilon+O(\varepsilon^{\frac{3}{2}}).
		$$
		
		Hence, combining with Lemma \ref{lem3.1}, we get that, if $\mu<0$, $\tau\in\mathbb{R}$ or $\mu=0$, $\tau<0$, then as $\varepsilon\to0^+$
		\begin{equation*}
			\begin{array}{ll}
				I(t_\varepsilon V_\varepsilon )
				&=\ds \frac{t_\varepsilon^2}{2} \ds(|\Delta V_\varepsilon|^2_2 +\mu |\nabla V_\varepsilon|^2_2-\lambda |V_\varepsilon|^2_2) -\frac{t_\varepsilon^{ 2^{**} } } {2^{**}} |V_\varepsilon|^ {2^{**}}_{2^{**}}-\ds \frac{\tau}{2}t^2_\varepsilon(\log t^2_\varepsilon -1)|V_\varepsilon|^2_2-\frac{\tau}{2}t_\varepsilon^2 \ds \int_\Omega V_\varepsilon^2\log V^2_\varepsilon \\[3mm]
				&\leq \ds \frac{t_\varepsilon^2}{2}S^{\frac{N}{4}}+\frac{9t_\varepsilon^2}{64}\mu c_7\omega_7\varepsilon-\frac{t_\varepsilon^2}{2^{**}}S^{\frac{N}{4}}+\frac{t_\varepsilon^2}{2}\tau\cdot \frac{3}{2} c_7 \omega_7 \varepsilon^{\frac{3}{2}} \log \frac{1}{\varepsilon}\int_0^{2\rho}\varphi^2dr+O(\varepsilon^{\frac{3}{2}})\\[3mm]
				&\leq \ds \frac{2}{N} S^\frac{N}{4} +\frac{9t_\varepsilon^2}{64}\mu c_7\omega_7\varepsilon+\frac{t_\varepsilon^2}{2}\tau\cdot \frac{3}{2} c_7 \omega_7 \varepsilon^{\frac{3}{2}} \log \frac{1}{\varepsilon}\int_0^{2\rho}\varphi^2dr+O(\varepsilon^{\frac{3}{2}})\\[3mm]
				&< \ds \frac{2}{N} S^\frac{N}{4}.
			\end{array}
		\end{equation*}
		%
		%
		
		For $N=6$, combining with Lemma \ref{lem3.1} and Lemma \ref{lem3.2}, we have that, if $\mu<0$, $\tau\in\mathbb{R}$ or $\mu= 0$, $\tau<0$, then as $\varepsilon\to0^+$
		\begin{equation}\label{eqS3.13}
			\begin{array}{ll}
				I(t_\varepsilon V_\varepsilon )
				&=\ds \frac{t_\varepsilon^2}{2} \ds(|\Delta V_\varepsilon|^2_2 +\mu |\nabla V_\varepsilon|^2_2-\lambda |V_\varepsilon|^2_2) -\frac{t_\varepsilon^{ 2^{**} } } {2^{**}} |V_\varepsilon|^ {2^{**}}_{2^{**}}-\ds \frac{\tau}{2}t^2_\varepsilon(\log t^2_\varepsilon -1)|V_\varepsilon|^2_2-\frac{\tau}{2}t_\varepsilon^2 \ds \int_\Omega V_\varepsilon^2\log V^2_\varepsilon \\[3mm]
				&\leq \ds \frac{t_\varepsilon^2}{2}S^{\frac{N}{4}}+t_\varepsilon^2\mu c_6\omega_6\varepsilon\log \frac{1}{\varepsilon}-\frac{t_\varepsilon^2}{2^{**}}S^{\frac{N}{4}}+\frac{t_\varepsilon^2}{2}\tau c_6 \omega_6 \varepsilon\log \frac{1}{\varepsilon}\int_0^{2\rho}\varphi^2rdr+O(\varepsilon)\\[3mm]
				&\leq \ds \frac{2}{N} S^\frac{N}{4} +\frac{t_{\varepsilon }^{2}}{2}c_6 \omega_6 \varepsilon \log \frac{1}{\varepsilon}\left(2\mu+\tau \int_0^{2\rho}\varphi^2rdr\right)+O(\varepsilon)\\[3mm]
				&< \ds \frac{2}{N} S^\frac{N}{4},
			\end{array}
		\end{equation}
		where we pick an appropriate $\rho$ such that $2\mu+\tau\int_{0}^{2\rho}\varphi^2rdr<0$.
		%
		%
		
		For $N=5$, we obtain that, if $\tau<0$, then as $\varepsilon\to0^+$
	\begin{align*}
				I(t_\varepsilon V_\varepsilon )
				&=\ds \frac{t_\varepsilon^2}{2} \ds(|\Delta V_\varepsilon|^2_2 +\mu |\nabla V_\varepsilon|^2_2-\lambda |V_\varepsilon|^2_2) -\frac{t_\varepsilon^{ 2^{**} } } {2^{**}} |V_\varepsilon|^ {2^{**}}_{2^{**}}-\ds \frac{\tau}{2}t^2_\varepsilon(\log t^2_\varepsilon -1)|V_\varepsilon|^2_2-\frac{\tau}{2}t_\varepsilon^2 \ds \int_\Omega V_\varepsilon^2\log V^2_\varepsilon \\
				&\leq \ds \frac{t_\varepsilon^2}{2}S^{\frac{N}{4}}-\frac{t_\varepsilon^2}{2^{**}}S^{\frac{N}{4}}+\frac{t_\varepsilon^2}{4}\tau c_5 \omega_5 \varepsilon^{\frac{1}{2}}\log \frac{1}{\varepsilon}\int_0^{2\rho}\varphi^2r^2dr+O(\varepsilon^\frac{1}{2})\\
				&\leq \ds \frac{2}{N} S^\frac{N}{4} +\frac{t_\varepsilon^2}{4}\tau c_5 \omega_5 \varepsilon^{\frac{1}{2}}\log \frac{1}{\varepsilon}\int_0^{2\rho}\varphi^2r^2dr+O(\varepsilon^\frac{1}{2})\\
				&< \ds \frac{2}{N} S^\frac{N}{4}.
	\end{align*}
		%
	\end{proof}
	\section{The proof of the main results}
	
	In this section, we give the proof of Theorem \ref{t1.1}.
	\begin{proof}[\bf The Proof of Theorem \ref{t1.1}:]  \
		Assume that $N\ge5$, $\lambda$, $\mu$ and $\tau$ satisfy the assumptions of  Theorem \ref{t1.1}. By Lemma \ref{lem2.2}, we have that the functional $I$ has the
		Mountain Pass geometry  which implies that there exists a sequence $\{u_n\}\subset H^2_{0}(\Omega)$ such that, as $n \to \infty $
		$$I(u_n)\to d~and~I'(u_n)\to0.$$
		Using Lemma \ref{lem2.4}, we know that $\{u_n\}$ is bounded in $H^2_{0}(\Omega)$. It follows from Lemmas \ref{lem2.6}, \ref{lem2.7}, \ref{lem3.4}, \ref{lem3.6} and \ref{lem3.8}, then the equation $\eqref{eqS1.1}$ has at least a nontrivial weak solution.

		
		
	\end{proof}

	\textbf{Acknowledgments:} ~~
	This paper  was  supported by the fund from NSF of China (No. 12061012).

	\newpage
	\footnotesize
	

\begin{thebibliography}{99}
		\bibitem{BL} H. Br\'{e}zis, E. Lieb,  A relation between pointwise convergence of functions and convergence of functionals, Proc. Amer. Math. Soc., 88 (1983), 486-490.
		
		\bibitem{BN}  H.  Br\'{e}zis, L. Nirenberg,  Positive solutions of nonlinear elliptic equations involving critical Sobolev exponents, Comm. Pure Appl. Math., 36(1983), 437-477.
		
		\bibitem{CSS} G. Cerami, S. Solimini,  M. Struwe, Some existence results for superlinear elliptic boundary value problems involving critical exponents, J. Funct. Anal., 69(1986), 289-306.
		
		\bibitem{DHPZ} Y. Deng, Q. He, Y. Pan, X. Zhong, The existence of positive solution for an elliptic problem with critical growth and logarithmic perturbation, arXiv:2210.01373, 2022.
		
		\bibitem{DL} Y. Deng,  Y.  Li,  Existence and bifurcation of the positive solutions for a semilinear equation with critical exponent, J. Differential Equations., 130(1996), 
		179-200.
		
		\bibitem{DY} Y. Deng, J. Yang,  Existence of multiple solutions and bifurcation for critical semilinear biharmonic equations, Systems Sci. Math. Sci., 8(1995), 319-326.
		
		\bibitem{EFJ} D. Edmunds,  D. Fortunato,  E. Jannelli,  Critical exponents, critical dimensions and the biharmonic operator, Arch. Rational Mech. Anal., 112(1990), 269-289.
		
		\bibitem{GDW}Y.  Gu,  Y. Deng,  X.  Wang,  Existence of nontrivial solutions for critical semilinear biharmonic equations, Systems Sci. Math. Sci., 7(1994), 140-152.
		
		\bibitem{HL} Q. He, Z. Lv, Existence and nonexistence of nontrivial solutions for critical biharmonic equations, J. Math. Anal. Appl. 495 (2021),1-29.
		
		\bibitem{JC} T. Jung,   Q. Choi, Nonlinear biharmonic boundary value problem, Bound. Value Probl., 1(2014), 30-40.
		
		\bibitem{LL} E. H. Lieb, M. Loss, Analysis. Graduate Studies in Mathematics, 14. American Mathematical Society, Providence, RI, 2001.
		
		\bibitem{LP}P.  Lions,  The concentration-compactness principle in calculus of variations, the limit case, part I, Rev. Mat. Iberoamericana., 1(1985), 145-201.	
		
		\bibitem{LZY} Z. Liu,  Infinitely many peak solutions for a biharmonic equation involving critical exponent, Math. Methods Appl. Sci., 39(2016), 3461-3477.
		
		\bibitem{NSY} E.  Noussair, C. Swanson,  J. Yang,  Critical semilinear biharmonic equations in $\R^N$, Proc. Roy. Soc. Edinburgh Sect. A., 121(1992), 139-148.
		
		\bibitem{PS} P. Pucci, J. Serrin,  Critical exponents and critical dimensions for polyharmonic operators, J. Math. Pures Appl., 69(1990), 55-83.
		
		\bibitem{PSI} S. I. Pohoz$\check{a}$ev,  On the Eigenfunctions of the equation $\Delta u+\lambda f(u)=0$. Dokl.Akad. Nauk SSSR. 165(1965), 36-39.
		
		\bibitem{SW} W. Shuai, Multiple solutions for logarithmic Schrdinger equations, Nonlinearity, 32.6(2019), 2201-2225.
		
		\bibitem{VL} R. Vorst, P. Lions,  Best constant for the embedding of the space ${{H}^{2}}\bigcap H_{0}^{1}$ into ${{L}^{\frac{2N}{N-2}}}$, Differential Integral Equations., 6(1993), 259-276.
		
		\bibitem{YWS}Y.  Yao, R. Wang, Y.  Shen,  Nonlinear solution for a class of semilinear biharmonic equation involving critical exponents, Acta Mathematica Sinica., 27(2007), 509-514.
		
		\bibitem{ZH} J. Zhang, T.  Hsu,  Multiplicity results for biharmonic equations involving multiple Rellich-type potentials and critical exponents, Bound. Value Probl., (2019), 103-122.
		
		\bibitem{ZY}Y. Zhang, G.  Yang,   Existence and regularity of weak solutions for the biharmonic equation with complete second order derivative. Dyn. Contin. Discrete Impuls. Syst. Ser. A Math. Anal. 17 (2010), 215--232.
		
		
		
		
		
		
		
		
	\end{thebibliography}
\end{document}